\newtheorem{lemma}{Lemma}[section]
\newtheorem{theorem}[lemma]{Theorem}
\newtheorem{corollary}[lemma]{Corollary}
\newtheorem{prop}[lemma]{Proposition}
\newtheorem{defi}[lemma]{Definition}
\newtheorem{rem}[lemma]{Remark}
\newtheorem{nota}[lemma]{Notation}
\DeclareMathOperator\ima{im}
\DeclareMathOperator\rank{rank}
\DeclareMathOperator\id{id}
\DeclareMathOperator\fend{End}
\DeclareMathOperator\Ker{ker}
\DeclareMathOperator\sym{Sym}
\DeclareMathOperator\kd{kd}
\newcommand{\ra}{\rightarrow}
\newcommand{\imp}{\Rightarrow}
\newcommand{\ran}{\rangle}
\newcommand{\lam}{\lambda}
\newcommand{\cg}{\mathcal{G}}
\newcommand{\te}{\mbox{\tiny $E$}}
\newcommand{\cge}{\mathcal{G}_{\te}}
\newcommand\ara{\stackrel{a}{\ra}}
\newcommand\arb{\stackrel{b}{\ra}}
\newcommand\mrl{\mathit{rl}}
\title{Minimal Paths in the Commuting Graphs of Semigroups}
\author{Jo\~ao Ara\'ujo\footnote{Corresponding author.}\\
  {\small Universidade Aberta, R. Escola Polit\'{e}cnica, 147}\\
  {\small 1269-001 Lisboa, Portugal}\\{\footnotesize \&}\\
  {\small Centro de \'{A}lgebra, Universidade de Lisboa}\\
  {\small 1649-003 Lisboa, Portugal, jaraujo@ptmat.fc.ul.pt}\\
{\small (+351)21 790 47 00 \ Fax (+351) 21 795 42 88}
\and Michael Kinyon\\
{\small Department of Mathematics, University of Denver}\\
{\small Denver, Colorado 80208, mkinyon@math.du.edu}
\and Janusz Konieczny\\
{\small Department of Mathematics, University of Mary Washington}\\
{\small Fredericksburg, Virginia 22401, jkoniecz@umw.edu}}
\date{}
\begin{document}
\maketitle

\begin{abstract}
Let $S$ be a finite non-commutative semigroup. The commuting graph of $S$, denoted $\cg(S)$,
is the graph whose vertices are the non-central elements of $S$ and whose edges are the sets
$\{a,b\}$ of vertices such that $a\ne b$ and $ab=ba$. Denote by $T(X)$ the semigroup of full transformations on a finite set $X$.
Let $J$ be any ideal of $T(X)$ such that $J$ is different from the ideal of constant transformations
on $X$. We prove that if $|X|\geq4$, then, with a few exceptions, the diameter of $\cg(J)$ is $5$.
On the other hand, we prove that for every positive integer $n$, there exists a semigroup $S$ such that the  diameter of $\cg(S)$ is $n$.

We also study the left paths in $\cg(S)$, that is, paths $a_1-a_2-\cdots-a_m$ such that
$a_1\ne a_m$ and $a_1a_i=a_ma_i$ for all $i\in \{1,\ldots ,m\}$.
We prove
that for every positive integer $n\geq2$, except $n=3$, there exists a semigroup
whose shortest left path has length $n$.
As a corollary, we use the previous results to solve  a purely algebraic old problem  posed by B.M. Schein.

\vskip 2mm

\noindent\emph{$2010$ Mathematics Subject Classification\/}. 05C25, 05C12, 20M20.

\vskip 2mm

\noindent \emph{Keywords}: Commuting graph, path, left path, diameter, transformation semigroup, ideal.

\end{abstract}

\section{Introduction}
\setcounter{equation}{0}
The commuting graph of a finite non-abelian group $G$ is a simple graph whose
vertices are all non-central elements of $G$ and two distinct vertices $x,y$ are adjacent if $xy=yx$.
Commuting graphs of various groups have been studied in terms of their properties (such as connectivity or diameter),
for example in \cite{BaBu03}, \cite{Bu06}, \cite{IrJa08}, and \cite{Se01}.
They have also been used as a tool to prove group theoretic results, for example in \cite{Be83}, \cite{RaSe01},
and \cite{RaSe02}.

The concept of the commuting graph carries over to semigroups. Let $S$ be a finite non-commutative semigroup
with center $Z(S)=\{a\in S:ab=ba\mbox{ for all $b\in S$}\}$. The \emph{commuting graph}
of $S$, denoted $\cg(S)$, is the simple graph (that is, an undirected graph with no multiple edges or loops)
whose vertices are the elements of $S-Z(S)$ and whose edges are the sets $\{a,b\}$ such
that $a$ and $b$ are distinct vertices with $ab=ba$.

This paper initiates the study of commuting graphs of semigroups. Our main goal is to study
the lengths of minimal paths. We shall consider two types of paths: ordinary paths and so called left paths.

We first investigate the semigroup $T(X)$
of full transformations on a finite set $X$, and determine the diameter of
the commuting graph of every ideal of $T(X)$ (Section~\ref{stx}). We find that, with a few exceptions,
the diameter of $\cg(J)$, where $J$ is an ideal of $T(X)$, is $5$.
This small diameter does not extend to semigroups in general. We prove that for every $n\geq2$,
there is a finite semigroup $S$ whose commuting graph has diameter $n$ (Theorem~\ref{tald}).
To prove the existence of such a semigroup, we use our work on the {\em left paths} in the commuting graph of a semigroup.

Let $S$ be a semigroup. A path $a_1-a_2-\cdots-a_m$ in $\cg(S)$ is called a \emph{left path} (or $l$-path) if $a_1\ne a_m$ and
$a_1a_i=a_ma_i$ for every $i\in\{1,\ldots,m\}$. If there is any $l$-path in $\cg(S)$,
we define the \emph{knit degree} of $S$, denoted $\kd(S)$, to be the length of a shortest $l$-path in $\cg(S)$.

For every $n\geq2$ with $n\ne3$, we construct a band (semigroup of idempotents) of knit degree $n$ (Section~\ref{smlp}).
It is an open problem if there is a semigroup of knit degree $3$. The constructions presented in Section~\ref{smlp}
also give a band $S$ whose commuting graph has diameter $n$ (for every $n\geq4$).
As another application of our work on the left paths, we settle a conjecture on bands formulated by B.M. Schein in 1978 (Section~\ref{ssch}). Finally, we present some problems regarding the commuting graphs of semigroups (Section~\ref{spro}).

\section{Commuting Graphs of Ideals of $T(X)$}\label{stx}
\setcounter{equation}{0}

Let $T(X)$ be the semigroup of full transformations on a finite set $X$, that is,
the set of all functions from $X$ to $X$ with composition as the operation.
We will write functions on the right and compose from left to right, that is,
for $a,b\in T(X)$ and $x\in X$, we will write $xa$ (not $a(x)$)
and $x(ab)=(xa)b$ (not $(ba)(x)=b(a(x))$).
In this section, we determine
the diameter of the commuting graph of every ideal of $T(X)$.
Throughout this section, we assume that $X=\{1,\ldots,n\}$.

Let $\Gamma$ be a simple graph, that is, $\Gamma=(V,E)$, where $V$ is a finite
non-empty set of vertices and $E\subseteq\{\{u,v\}:u,v\in V, u\ne v\}$ is a set of edges.
We will write $u-v$ to mean that $\{u,v\}\in E$. Let $u,w\in V$. A \emph{path} in $\Gamma$ from $u$ to $w$
is a sequence of pairwise distinct vertices
$u=v_1,v_2,\ldots,v_m=w$ ($m\geq1$) such that $v_i-v_{i+1}$ for every $i\in\{1,\ldots,m-1\}$.
If $\lam$ is a path $v_1,v_2,\ldots,v_m$,
we will write $\lam=v_1-v_2-\cdots-v_m$ and say that $\lam$ has \emph{length} $m-1$.
We say that a path $\lam$ from $u$ to $w$
is a \emph{minimal path} if there is no path from $u$ to $w$ that is shorter than $\lam$.

We say that the \emph{distance} between vertices $u$ and $w$ is $k$, and write $d(u,w)=k$,
if a minimal path from $u$ to $w$ has length $k$. If there is no path from $u$ to $w$, we
say that the distance between $u$ and $w$ is infinity, and write $d(u,w)=\infty$. The maximum
distance $\max\{d(u,w):u,w\in V\}$ between vertices of $\Gamma$ is called the \emph{diameter}
of $\Gamma$. Note that the diameter of $\Gamma$ is finite if and only if $\Gamma$ is connected.

If $S$ is a finite non-commutative semigroup, then the commuting graph $\cg(S)$
is a simple graph with $V=S-Z(S)$ and, for $a,b\in V$, $a-b$ if and only if $a\ne b$ and $ab=ba$.

For $a\in T(X)$, we denote by $\ima(a)$ the image of $a$, by $\Ker(a)=\{(x,y)\in X\times X:xa=ya\}$ the kernel of $a$,
and by $\rank(a)=|\ima(a)|$ the rank of $a$.
It is well known (see \cite[Section~2.2]{ClPr64}) that in $T(X)$ the only element of $Z(T(X))$
is the identity transformation on $X$, and that $T(X)$ has exactly $n$ ideals: $J_1,J_2,\ldots,J_n$, where, for $1\leq r\leq n$,
\[
J_r=\{a\in T(X):\rank(a)\leq r\}.
\]
Each ideal $J_r$ is principal and any $a\in T(X)$ of rank $r$ generates $J_r$.
The ideal $J_1$ consists of the transformations of rank $1$ (that is, constant transformations),
and it is clear that $\cg(J_1)$ is the graph with $n$ isolated vertices.

Let $S$ be a semigroup. We denote by $\cge(S)$ the subgraph of $\cg(S)$ induced by the
non-central idempotents of $S$. The graph $\cge(S)$ is said to be the \emph{idempotent commuting graph} of $S$.
We first determine the diameter of $\cge(J_r)$. This approach is justified by the following lemma.

\begin{lemma}\label{lad1}
Let $2\leq r<n$ and let $a,b\in J_r$ be such that $ab\ne ba$. Suppose
$a-a_1-a_2-\cdots-a_k-b$ ($k\geq1$) is a minimal path in $\cg(J_r)$ from $a$ to $b$.
Then there are idempotents $e_1,e_2,\ldots,e_k\in J_r$
such that $a-e_1-e_2-\cdots-e_k-b$ is a minimal path in $\cg(J_r)$ from $a$ to $b$.
\end{lemma}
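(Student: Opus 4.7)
The plan is to manufacture each $e_i$ as an idempotent power of the corresponding $a_i$. Since $J_r$ is a finite semigroup, for each $i\in\{1,\ldots,k\}$ there is a positive integer $m_i$ such that $e_i:=a_i^{m_i}$ is idempotent, and $e_i\in J_r$ automatically because $\rank(e_i)\le\rank(a_i)\le r$. The hypothesis $r<n$ is used exactly here: it forces $\rank(e_i)\le r<n$, so $e_i\ne \id_X$, and therefore $e_i$ is a non-central element of $J_r$. Thus each $e_i$ is a legitimate vertex of $\cge(J_r)\subseteq\cg(J_r)$.

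Next I would verify adjacency in the new sequence by appealing to the elementary fact that if two elements of a semigroup commute, so do any powers of them. Since $a$ commutes with $a_1$, it commutes with $e_1=a_1^{m_1}$; since $a_i$ commutes with $a_{i+1}$ for $1\le i<k$, so do $e_i$ and $e_{i+1}$; and similarly $e_k$ commutes with $b$. Hence consecutive terms of $a,e_1,\ldots,e_k,b$ commute and lie in $\cg(J_r)$.

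The delicate point, and the main obstacle, is distinctness: the sequence is a path of length $k+1$ only if the $k+2$ terms $a,e_1,\ldots,e_k,b$ are pairwise distinct. I would handle this by contradiction, leveraging minimality of the original path. Adopt the convention $e_0:=a$ and $e_{k+1}:=b$, and suppose $e_i=e_j$ for some $0\le i<j\le k+1$. By the adjacency step, the sequence
\[
a\,\text{--}\,e_1\,\text{--}\,\cdots\,\text{--}\,e_i\,\text{--}\,e_{j+1}\,\text{--}\,\cdots\,\text{--}\,e_k\,\text{--}\,b
\]
is a walk in $\cg(J_r)$ of length $(k+1)-(j-i)\le k$ in which consecutive vertices commute. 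Deleting any remaining repetitions gives a genuine simple path in $\cg(J_r)$ from $a$ to $b$ of length at most $k$, contradicting the minimality of the original path of length $k+1$.

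Putting these three steps together shows that $a\,\text{--}\,e_1\,\text{--}\,\cdots\,\text{--}\,e_k\,\text{--}\,b$ is a path of length $k+1$ in $\cg(J_r)$ whose intermediate vertices are idempotents. Since it has the same length as the assumed minimal path, it is itself minimal. The only nontrivial ingredient is the distinctness argument; everything else is just the standard finite-semigroup trick of replacing an element by its idempotent power, combined with $r<n$ to keep the result off-center.
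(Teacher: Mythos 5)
Your proposal is correct and follows essentially the same route as the paper: replace each $a_i$ by an idempotent power $e_i=a_i^{m_i}$, observe that powers of commuting elements commute, and invoke minimality of the original path to force $a,e_1,\ldots,e_k,b$ to be pairwise distinct. The only imprecision is your justification that each $e_i$ is a vertex: $e_i\ne\id_X$ does not by itself give non-centrality in $J_r$ (centrality is relative to $J_r$, not $T(X)$); the paper instead picks $x\in X-\ima(e_i)$, which exists since $\rank(e_i)\le r<n$, and notes that $e_i$ fails to commute with the constant map $c_x\in J_r$, so $e_i\notin Z(J_r)$.
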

\begin{proof}
Since $J_r$ is finite, there is an integer $p\geq1$ such that $e_1=a_1^p$ is an idempotent in $J_r$.
Note that $e_1\notin Z(J_r)$ since for any $x\in X-\ima(e_1)$, $e_1$ does not commute with $c_x\in J_r$, where $c_x$ is the constant
transformation with $\ima(c_x)=\{x\}$.
Since $a_1$ commutes with $a$ and $a_2$, the idempotent $e_1=a_1^p$ also commutes with $a$ and $a_2$, and so
$a-e_1-a_2-\cdots-a_k-b$. Repeating the foregoing argument for $a_2,\ldots,a_k$,
we obtain idempotents $e_2,\ldots,e_k$ in $J_r$ such that $a-e_1-e_2-\cdots-e_k-b$.
Since  the path $a-a_1-a_2-\cdots-a_k-b$ is minimal, it follows that $a,e_1,e_2,\ldots,e_k,b$ are pairwise distinct
and the path $a-e_1-e_2-\cdots-e_k-b$ is minimal.
\end{proof}

It follows from Lemma~\ref{lad1} that if $d$ is the diameter of $\cge(J_r)$, then the diameter of $\cg(J_r)$
is at most $d+2$.

\subsection{Idempotent Commuting Graphs}\label{sside}
In this subsection,
we assume that $n\geq3$ and $2\leq r<n$.
We will show that, with some exceptions, the diameter of $\cge(J_r)$ is $3$ (Theorem~\ref{tdia}).

Let $e\in T(X)$ be an idempotent. Then there is a unique partition $\{A_1,A_2,\ldots,A_k\}$ of $X$ and
unique elements $x_1\in A_1, x_2\in A_2,\ldots, x_k\in A_k$ such that for every $i$,
$A_ie=\{x_i\}$. The partition $\{A_1\ldots,A_k\}$ is induced by the kernel of $e$, and $\{x_1,\ldots,x_k\}$
is the image of $e$. We will use the following notation for $e$:
\begin{equation}\label{eide}
e=(A_1,x_1\ran(A_2,x_2\ran\ldots(A_k,x_k\ran.
\end{equation}
Note that $(X,x\ran$ is the constant idempotent with image $\{x\}$. The following result has been obtained in \cite{ArKo03} and
\cite{Ko02} (see also \cite{ArKo04}).

\begin{lemma}\label{lcen}
Let $e=(A_1,x_1\ran(A_2,x_2\ran\ldots(A_k,x_k\ran$ be an idempotent in $T(X)$ and let $b\in T(X)$. Then $b$ commutes with $e$
if and only if for every $i\in\{1,\ldots,k\}$, there is $j\in\{1,\ldots,k\}$ such that
$x_ib=x_j$ and $A_ib\subseteq A_j$.
\end{lemma}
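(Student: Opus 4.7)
The plan is to translate the commutation relation $eb=be$ into the pointwise identity $(xe)b=(xb)e$ holding for every $x\in X$ (which is legitimate because functions act on the right), and then to read off both directions of the equivalence by evaluating this identity at carefully chosen points. The structural facts I would rely on are that $e$ collapses each kernel class $A_i$ to the single representative $x_i$, that $x_i\in A_i$ (so $x_ie=x_i$), and that the representatives $x_1,\ldots,x_k$ are pairwise distinct.

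For the forward direction, suppose $eb=be$. First I would evaluate the pointwise identity at $x=x_i$: the left-hand side becomes $x_ib$, and the right-hand side is $(x_ib)e$, which is the representative of whichever class $A_j$ happens to contain $x_ib$. This gives $x_ib=x_j$, establishing half of the condition and, incidentally, showing that the index $j$ is uniquely determined by $i$. Next I would evaluate at an arbitrary $y\in A_i$: the left-hand side is $x_ib=x_j$, while the right-hand side is $(yb)e$, namely the representative of the class containing $yb$. Because the $x_\ell$ are pairwise distinct, that class must be $A_j$, so $yb\in A_j$ and hence $A_ib\subseteq A_j$.

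For the converse, suppose the kernel-class condition holds and let $\sigma(i)=j$ be the resulting (well-defined) index map. For each $x\in X$, take the unique $i$ with $x\in A_i$; then $(xe)b=x_ib=x_{\sigma(i)}$, while $xb\in A_{\sigma(i)}$ by hypothesis, so $(xb)e=x_{\sigma(i)}$ as well. Hence $(xe)b=(xb)e$ for every $x$, which is $eb=be$.

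The proof is mostly a matter of bookkeeping, so no genuine obstacle is expected; the only points requiring attention are the left-to-right composition convention and the use of distinctness of the representatives to recover the index $j$ from the value $x_j$ at the critical step of the forward direction.
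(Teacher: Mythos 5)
Your proof is correct: translating $eb=be$ into the pointwise identity $(xe)b=(xb)e$ and evaluating at the representatives $x_i$ and at arbitrary $y\in A_i$ gives exactly the stated criterion, and the converse verification via the index map $\sigma$ is complete. Note that the paper itself offers no proof of this lemma --- it is quoted as a known result from the cited references \cite{ArKo03} and \cite{Ko02} --- so there is no internal argument to compare against; your direct bookkeeping argument is the standard one and fills that gap correctly.
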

We will use Lemma~\ref{lcen} frequently, not always mentioning it explicitly.
The following lemma is an immediate consequence of Lemma~\ref{lcen}.

\begin{lemma}\label{ljo1}
Let $e,f\in J_r$ be idempotents and suppose
there is $x\in X$ such that $x\in\ima(e)\cap\ima(f)$. Then $e-(X,x\ran-f$.
\end{lemma}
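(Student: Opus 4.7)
The plan is to exhibit the constant map $c := (X,x\ran$ as a common neighbor of $e$ and $f$ in $\cg(J_r)$, so that $e - c - f$ is the desired length-two path. Two things need verifying: (a) $c$ is a vertex of $\cg(J_r)$, i.e., $c \in J_r \setminus Z(J_r)$; and (b) $c$ commutes with each of $e$ and $f$.

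For (b), I would apply Lemma~\ref{lcen} with $b = c$. Writing $e$ in its canonical form $(A_1, x_1\ran \cdots (A_k, x_k\ran$ and using $x \in \ima(e)$ to pick the unique index $j_0$ with $x_{j_0} = x$, I observe that for every $i$, $x_i c = x = x_{j_0}$ and $A_i c = \{x\} \subseteq A_{j_0}$. Hence the criterion of Lemma~\ref{lcen} is satisfied uniformly with $j = j_0$, giving $ec = ce$; the same argument applied to $f$ (with the corresponding index pinned down by $x \in \ima(f)$) yields $fc = cf$.

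For (a), $c$ has rank $1 \leq r$, so $c \in J_r$; non-centrality follows from the one-line argument already used inside the proof of Lemma~\ref{lad1}: under the subsection's standing hypothesis $n \geq 3$ one may pick $y \in X \setminus \{x\}$, and then $c$ fails to commute with the constant $(X, y\ran \in J_r$, since the two products are the distinct constants $(X, y\ran$ and $(X, x\ran$. I do not anticipate a real obstacle: Lemma~\ref{lcen} is engineered precisely for this situation — a constant map lies in the centralizer of every idempotent whose image contains the target point, which makes the block-matching condition trivially satisfied by taking a single $j$ throughout. The only housekeeping concerns boundary cases where $c$ coincides with $e$ or $f$, or where $e = f$ — then the displayed path degenerates, but the conclusion is simply read as the assertion that $e$ and $f$ are at distance at most $2$ in $\cg(J_r)$.
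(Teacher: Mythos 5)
Your proof is correct and matches the paper's intent exactly: the paper states this lemma without proof as ``an immediate consequence of Lemma~\ref{lcen},'' and your verification that the constant $(X,x\ran$ satisfies the block-matching criterion of that lemma (with a single index $j_0$ pinned down by $x\in\ima(e)$, resp.\ $x\in\ima(f)$) is precisely the intended argument. Your extra housekeeping on membership in $J_r$, non-centrality, and degenerate coincidences is harmless and consistent with how the lemma is used later.
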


\begin{lemma}\label{ljo2}
Let $e,f\in J_r$ be idempotents such that $\ima(e)\cap\ima(f)=\emptyset$.
Suppose there is $(x,y)\in\ima(e)\times\ima(f)$ such that $(x,y)\in\Ker(e)\cap\Ker(f)$.
Then there is an idempotent $g\in J_r$ such that $e-g-f$.
\end{lemma}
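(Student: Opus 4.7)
My plan is to construct an explicit rank-$2$ idempotent $g\in J_r$ with image $\{x,y\}$ and then apply Lemma~\ref{lcen} to verify that $g$ commutes with both $e$ and $f$. Writing $e=(A_1,x_1\ran\ldots(A_k,x_k\ran$ and $f=(B_1,y_1\ran\ldots(B_l,y_l\ran$ as in \eqref{eide}, I relabel so that $x=x_1$ (with kernel class $A_1$) and $y=y_1$ (with kernel class $B_1$). From $xe=x$ and the hypothesis $(x,y)\in\Ker(e)$ I get $ye=x=x_1$, which forces $y\in A_1$; symmetrically, $xf=yf=y$ forces $x\in B_1$. Also $x\neq y$ since $\ima(e)\cap\ima(f)=\emptyset$.

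Next I would define $g\in T(X)$ by sending every element of $\ima(f)$ to $y$ and every element of $X-\ima(f)$ to $x$. Since $y\in\ima(f)$ and $x\in\ima(e)\subseteq X-\ima(f)$, the map $g$ is idempotent with $\ima(g)=\{x,y\}$ and kernel classes $\{X-\ima(f),\ima(f)\}$. Clearly $\rank(g)=2\le r$, so $g\in J_r$; moreover $g\neq e,f$ because $\ima(g)$ is contained in neither $\ima(e)$ nor $\ima(f)$. To see that $g$ is a vertex of $\cg(J_r)$, choose any $z\in X-\{x,y\}$ (possible since $n\ge3$): the constant transformation with image $\{z\}$ lies in $J_r$ and visibly fails to commute with $g$, so $g\notin Z(J_r)$.

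It remains to verify commutation, which is where Lemma~\ref{lcen} does the work. For each image point $x_i$ of $e$, since $x_i\in\ima(e)$ avoids $\ima(f)$ we have $x_ig=x=x_1$, and every element of $A_i$ is mapped by $g$ into $\{x,y\}\subseteq A_1$ (this is where $y\in A_1$ is used), so $A_ig\subseteq A_1$. Hence $g$ commutes with $e$, and a symmetric argument using $x\in B_1$ shows $g$ commutes with $f$. The one subtle point — and the main obstacle — is discovering the right kernel partition for $g$: any rank-$2$ idempotent with image $\{x,y\}$ commuting with $e$ must send all of $\ima(e)$ to $x$ (since $y\notin\ima(e)$), which forces the $x$-fibre of $g$ to contain $\ima(e)$ and be disjoint from $\ima(f)$. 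The minimal choice consistent with this — namely $\{X-\ima(f),\ima(f)\}$ — then automatically satisfies the analogous constraint for $f$ thanks to $x\in B_1$ and $y\in A_1$, and the verification becomes mechanical.
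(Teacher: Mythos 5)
Your proof is correct and follows essentially the same route as the paper: both arguments deduce $y\in A_1$ and $x\in B_1$ from the kernel hypothesis and then exhibit an explicit rank-$2$ idempotent with image $\{x,y\}$ whose two kernel classes separate $\ima(e)$ from $\ima(f)$, verifying commutativity via Lemma~\ref{lcen}. The only difference is cosmetic — the paper takes the partition $\{\ima(e),X-\ima(e)\}$ where you take $\{X-\ima(f),\ima(f)\}$ — and your extra checks that $g$ is a genuine non-central vertex distinct from $e$ and $f$ are a welcome bit of added care.
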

\begin{proof}
Let $e=(A_1,x_1\ran\ldots(A_k,x_k\ran$ and $f=(B_1,y_1\ran\ldots(B_m,y_m\ran$. We may assume
that $x=x_1$ and $y=y_1$. Since $(x,y)\in\Ker(e)\cap\Ker(f)$, we have $y\in A_1$ and $x\in B_1$.
Let $g=(\ima(e),x\ran(X-\ima(e),y\ran$. Then $g$ is
in $J_r$ since $\rank(g)=2$ and $r\geq2$. By Lemma~\ref{lcen}, we have $eg=ge$
(since $y\in A_1$) and $fg=gf$ (since $\ima(f)\subseteq X-\ima(e)$ and $x\in B_1$). Hence $e-g-f$.
\end{proof}

\begin{lemma}\label{lja1}
Let $e,f\in J_r$ be idempotents such that $\ima(e)\cap\ima(f)=\emptyset$. Then there are
idempotents $g,h\in J_r$ such that $e-g-h-f$.
\end{lemma}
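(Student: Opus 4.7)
The plan is a case analysis on the ranks of $e$ and $f$, producing explicit idempotent intermediates $g,h$ in each case and checking commutativity via Lemma~\ref{lcen} and Lemma~\ref{ljo1}.

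\emph{Main case: $\rank(f)\geq 2$.} Pick $y\in \ima(f)$. Since $\ima(e)\cap\ima(f)=\emptyset$, $y\notin\ima(e)$, so $y$ lies in a unique block $A_{j_0}$ of $e$; let $x=x_{j_0}$, so $x\neq y$ and $x,y\in A_{j_0}$. I define
\[
g=(X-\{y\},x\ran(\{y\},y\ran \quad\text{and}\quad h=(X,y\ran,
\]
both idempotents in $J_r$ because $r\geq 2$. By Lemma~\ref{lcen}, $g$ commutes with $e$: every $x_i\in\ima(e)$ avoids $y$, hence $x_ig=x=x_{j_0}$, and every block obeys $A_ig\subseteq\{x,y\}\subseteq A_{j_0}$ (using $x,y\in A_{j_0}$). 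The same lemma forces $g-h$ and $h-f$ through the shared image point $y$. Pairwise distinctness of $e,g,h,f$ follows from $y\notin\ima(e)$, $x\notin\ima(f)$, and the rank gap $\rank(h)=1<\rank(f)$.

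\emph{Remaining cases.} If $\rank(f)=1$ but $\rank(e)\geq 2$, I would simply swap the roles of $e$ and $f$ and reduce to the main case, reversing the resulting path. If both $e=(X,x\ran$ and $f=(X,y\ran$ are constants (necessarily with $x\neq y$), I would exploit $n\geq 3$ to pick $z\in X\setminus\{x,y\}$ and set
\[
g=(X-\{y,z\},x\ran(\{y,z\},z\ran \quad\text{and}\quad h=(\{x,z\},z\ran(X-\{x,z\},y\ran,
\]
both rank-$2$ idempotents in $J_r$. Lemma~\ref{lcen} gives $e-g$ (from $xg=x$) and $h-f$ (from $yh=y$), while direct composition yields $gh=hg=(X,z\ran$, so $g-h$; distinctness is immediate from ranks and images.

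The anticipated obstacle is precisely this double-constant subcase. In the main construction the intermediate constant $h=(X,y\ran$ coincides with $f$ whenever $f$ itself is a constant, so the length-$3$ path collapses and one must instead build a more symmetric rank-$2$ pair $(g,h)$ that meet at an auxiliary third point $z$; this is the only place the standing assumption $n>r\geq 2$ (hence $n\geq 3$) is really used. Everything else is a routine verification through Lemma~\ref{lcen}.
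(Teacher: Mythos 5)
Your main case reproduces the paper's proof exactly: pick $y\in\ima(f)$, locate the block $A_{j_0}$ of $e$ containing it, and take $g=(X-\{y\},x_{j_0}\ran(\{y\},y\ran$ and $h=(X,y\ran$, with commutativity checked through Lemma~\ref{lcen}. Your separate treatment of $\rank(f)=1$ is a small but genuine refinement: the paper applies its construction unconditionally, and when $f$ is a constant its $h=(X,y_1\ran$ coincides with $f$, so the asserted length-$3$ path degenerates to $e-g-f$ --- harmless for the diameter bound in Theorem~\ref{tdia}, but your rank-$2$ pair $g,h$ meeting at an auxiliary point $z$ is what is needed if the statement $e-g-h-f$ is read literally as a path of four distinct vertices.
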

\begin{proof}
Let $e=(A_1,x_1\ran\ldots(A_k,x_k\ran$ and $f=(B_1,y_1\ran\ldots(B_m,y_m\ran$. Since $\{A_1,\ldots,A_k\}$
is a partition of $X$,
there is $i$ such that $y_1\in A_i$. We may assume that $y_1\in A_1$. Let $g=(X-\{y_1\},x_1\ran(\{y_1\},y_1\ran$
and $h=(X,y_1\ran$. Then $g$ and $h$ are in $J_r$ (since $r\geq2$).
By Lemma~\ref{lcen}, $eg=ge$, $gh=hg$, and $hf=fh$. Thus $e-g-h-f$.
\end{proof}

\begin{lemma}\label{lja2}
Let $m$ be a positive integer such that $2m\leq n$, $\sigma$ be an $m$-cycle on $\{1,\ldots,m\}$, and
\[
e=(A_1,x_1\ran(A_2,x_2\ran\ldots(A_m,x_m\ran\mbox{ and }f=(B_1,y_1\ran(B_2,y_2\ran\ldots(B_m,y_m\ran
\]
be idempotents in $T(X)$ such that $x_1,\ldots,x_m,y_1,\ldots,y_m$ are pairwise distinct,
$y_i\in A_i$, and $x_{i\sigma}\in B_i$ ($1\leq i\leq m)$. Suppose that $g$
is an idempotent in $T(X)$ such that $e-g-f$. Then:
\begin{itemize}
  \item [\rm(1)] $x_jg=x_j$ and $y_jg=y_j$ for every $j\in\{1,\ldots,m\}$.
  \item [\rm(2)] If $1\leq i,j\leq m$ are such that
$A_i=\{x_i,y_i,z\}$, $B_j=\{y_j,x_{j\sigma},z\}$
and $A_i\cap B_j=\{z\}$, then $zg=z$.
\end{itemize}
\end{lemma}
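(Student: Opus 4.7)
The plan is to apply Lemma~\ref{lcen} to the two commutation relations $eg=ge$ and $fg=gf$ to extract two self-maps of the index set $\{1,\ldots,m\}$, and then exploit the cyclic interaction between the partitions of $e$ and $f$, encoded by $\sigma$, to pin those maps down.

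First I would invoke Lemma~\ref{lcen} for the commutation of $g$ with $e$: for each $i\in\{1,\ldots,m\}$ there is a unique $j$ with $x_ig=x_j$ and $A_ig\subseteq A_j$, which defines a map $\alpha\colon\{1,\ldots,m\}\to\{1,\ldots,m\}$ by $i\alpha=j$. Analogously the commutation of $g$ with $f$ produces a map $\beta$ satisfying $y_ig=y_{i\beta}$ and $B_ig\subseteq B_{i\beta}$. Once I have shown $\alpha=\beta=\id$, clause~(1) is immediate and clause~(2) follows from a short intersection argument.

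Next I would use the two crossover hypotheses to tie $\alpha$, $\beta$ and $\sigma$ together. Since $y_i\in A_i$, applying $g$ gives $y_{i\beta}=y_ig\in A_ig\subseteq A_{i\alpha}$; as also $y_{i\beta}\in A_{i\beta}$ and the $A_j$ are pairwise disjoint, this forces $i\alpha=i\beta$ for every $i$, so $\alpha=\beta$. Similarly, from $x_{i\sigma}\in B_i$ one obtains $x_{(i\sigma)\alpha}=x_{i\sigma}g\in B_ig\subseteq B_{i\beta}$; but also $x_{(i\sigma)\alpha}\in B_{((i\sigma)\alpha)\sigma^{-1}}$, and disjointness of the $B_j$ yields $\sigma\alpha\sigma^{-1}=\beta=\alpha$. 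Hence $\alpha$ centralizes the $m$-cycle $\sigma$ in the transformation monoid on $\{1,\ldots,m\}$.

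The main obstacle is the ensuing centralizer computation, though it is short: writing $\sigma$ as a cyclic shift, the relation $\alpha\sigma=\sigma\alpha$ forces $(i+1)\alpha-i\alpha$ to be independent of $i$, so $\alpha$ is a power of $\sigma$ and in particular a permutation. Since $g^2=g$, the same kind of calculation on the $x_j$ gives $\alpha^2=\alpha$, and the only idempotent power of $\sigma$ is the identity, so $\alpha=\id$. This yields $x_jg=x_j$ and $y_jg=y_j$ for every $j$, proving~(1), and upgrades the earlier inclusions to $A_ig\subseteq A_i$ and $B_jg\subseteq B_j$. For~(2), since $z\in A_i\cap B_j$, the element $zg$ now lies in both $A_i$ and $B_j$, so $zg\in A_i\cap B_j=\{z\}$ and thus $zg=z$.
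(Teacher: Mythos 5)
Your proposal is correct, and it reorganizes the argument in a way that differs noticeably from the paper's. The paper works locally: it first uses idempotency of $g$ to produce one fixed point $x_ig=x_i$ (from $x_1g=x_i$ and $g^2=g$), then alternately applies the two crossover hypotheses $y_i\in A_i$ and $x_{i\sigma}\in B_i$ to propagate fixedness step by step backwards around the cycle, obtaining $y_ig=y_i$, then $y_{i\sigma^{-1}}g=y_{i\sigma^{-1}}$, then $x_{i\sigma^{-1}}g=x_{i\sigma^{-1}}$, and so on until all $2m$ points are fixed. You instead package the action of $g$ on the index set into maps $\alpha$ and $\beta$ via Lemma~\ref{lcen}, use the same two crossover hypotheses (together with disjointness of the kernel classes) to prove the global identities $\alpha=\beta$ and $\sigma\alpha\sigma^{-1}=\alpha$, and then finish with the standard facts that the centralizer of an $m$-cycle in the full transformation monoid on $\{1,\dots,m\}$ consists of the powers of that cycle and that the only idempotent power of an $m$-cycle is the identity (idempotency of $\alpha$ coming from $g^2=g$). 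The underlying combinatorial mechanism --- the interleaving of the $x$'s among the $B_i$ and the $y$'s among the $A_i$ forcing rigidity around the cycle --- is the same in both proofs, but your version replaces the element-chasing induction with a short centralizer computation, which is arguably cleaner and makes the role of the hypothesis that $\sigma$ is a single $m$-cycle more transparent; the paper's version is more elementary and self-contained. Your derivation of clause~(2) from $A_ig\subseteq A_i$, $B_jg\subseteq B_j$, and $A_i\cap B_j=\{z\}$ coincides with the paper's.
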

\begin{proof}
Since $eg=ge$, $x_1g=x_i$ for some $i$. Then $x_ig=x_i$ (since $g$ is an idempotent). Thus, $e-g-f$ and
Lemma~\ref{lcen} imply that $y_ig=y_i$.
Since $x_i=x_{(i\sigma^{-1})\sigma}\in B_{i\sigma^{-1}}$ and $g$ commutes with $f$,
we have $y_{i\sigma^{-1}}g=y_{i\sigma^{-1}}$. But now, since $y_{i\sigma^{-1}}\in A_{i\sigma^{-1}}$ and $g$ commutes with $e$,
we have $x_{i\sigma^{-1}}g=x_{i\sigma^{-1}}$.
Continuing this way, we obtain $x_{i\sigma^{-k}}g=x_{i\sigma^{-k}}$ and $y_{i\sigma^{-k}}g=y_{i\sigma^{-k}}$
for every $k\in\{1,\ldots,m-1\}$. Since $\sigma$ is an $m$-cycle, it follows that $x_jg=x_j$ and $y_jg=y_j$
for every $j\in\{1,\ldots,m\}$. We have proved (1).

Suppose $A_i=\{x_i,y_i,z\}$, $B_j=\{y_j,x_{j\sigma},z\}$,
and $A_i\cap B_j=\{z\}$.
Then $zg\in\{x_i,y_i,z\}$ (since $x_ig=x_i$ and $eg=ge$) and $zg\in\{y_j,x_{j\sigma},z\}$ (since $y_jg=y_j$ and $fg=gf$).
Since $A_i\cap B_j=\{z\}$, we have $zg=z$, which proves (2).
\end{proof}

\begin{lemma}\label{lja3}
Let $n\geq4$. If $n\ne5$ or $r\ne4$, then for some idempotents $e,f\in J_r$, there is no idempotent $g\in J_r$
such that $e-g-f$.
\end{lemma}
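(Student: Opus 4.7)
The plan is to exhibit explicit idempotents $e,f\in J_r$ conforming to the template of Lemma~\ref{lja2}, and then exploit that lemma to show that any idempotent $g$ with $e-g-f$ must fix more than $r$ elements of $X$. Since an idempotent's rank equals the size of its fixed-point set, this will give $\rank(g)>r$, contradicting $g\in J_r$.

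First I fix the common rank $m$ of $e,f$ and the $m$-cycle $\sigma=(1\,2\,\cdots\,m)$. Set $m=r$ when $2r\leq n$, and $m=\lfloor n/2\rfloor$ when $2r>n$; in both subcases $2m\leq n$ and $m\leq r$. Pick $2m$ pairwise distinct points $x_1,\ldots,x_m,y_1,\ldots,y_m\in X$, and list the remaining $n-2m$ points as $z_1,\ldots,z_{n-2m}$. Take $A_1=\{x_1,y_1,z_1,\ldots,z_{n-2m}\}$, $A_i=\{x_i,y_i\}$ for $i\geq 2$, and $B_1=\{y_1,x_2,z_1,\ldots,z_{n-2m}\}$, $B_i=\{y_i,x_{i\sigma}\}$ for $i\geq 2$. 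Then $e=(A_1,x_1\ran\cdots(A_m,x_m\ran$ and $f=(B_1,y_1\ran\cdots(B_m,y_m\ran$ sit in $J_r$ with rank $m$, and the data $(e,f,\sigma)$ satisfies the hypotheses of Lemma~\ref{lja2}. By part~(1) of that lemma, any idempotent $g$ with $e-g-f$ fixes the $2m$ points $x_1,\ldots,x_m,y_1,\ldots,y_m$, forcing $\rank(g)\geq 2m$. A quick case check shows $2m>r$ holds in every case except when $n$ is odd and $r=n-1$ (where $m=(n-1)/2$ yields $2m=n-1=r$).

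Only that residual case $n$ odd, $r=n-1$ remains, and since $(n,r)\neq(5,4)$ we may assume $n\geq 7$, so $m:=(n-1)/2\geq 3$. There is now exactly one extra point $z$, which I distribute differently: $A_1=\{x_1,y_1,z\}$, $A_i=\{x_i,y_i\}$ for $i\geq 2$, and $B_2=\{y_2,x_3,z\}$, $B_j=\{y_j,x_{j\sigma}\}$ for $j\neq 2$. Since $m\geq 3$, we have $3=2\sigma\neq 1$, so $A_1\cap B_2=\{z\}$. Part~(1) of Lemma~\ref{lja2} again forces $g$ to fix every $x_i$ and $y_j$, and part~(2) applied with $i=1$, $j=2$ then forces $zg=z$. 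Hence $g$ fixes every element of $X$, so $g$ is the identity transformation of rank $n>n-1=r$, a contradiction.

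The main obstacle is precisely this residual case: invoking part~(2) of Lemma~\ref{lja2} requires indices with $i\neq j$ and $j\sigma\neq i$, which cannot both hold when $m=2$ and $\sigma=(1\,2)$ is the only available $m$-cycle. This is exactly what forces the exception $(n,r)=(5,4)$ in the statement; for every other admissible pair the construction above succeeds.
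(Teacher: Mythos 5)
Your proof is correct and follows essentially the same route as the paper: choose $m$ with $m\leq r$ and $2m\leq n$, build $e,f$ satisfying the hypotheses of Lemma~\ref{lja2}, use part~(1) to force $\rank(g)\geq 2m>r$ in the generic case, and handle the residual case $r=n-1$ with $n$ odd (where $n\geq7$, $m\geq3$) via part~(2) and the extra point $z$, exactly as in the paper's Case~2. The only differences are cosmetic: you organize the case split as $2r\leq n$ versus $2r>n$ rather than ``$r<n-1$ or $n$ even'' versus ``$r=n-1$ and $n$ odd,'' and you write out the kernel classes explicitly where the paper leaves the construction of $e,f$ implicit.
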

\begin{proof}
Let $n\ne5$ or $r\ne4$.
Suppose that $r<n-1$ or $n$ is even. Then there is an integer $m$ such that $m\leq r$ and $r<2m\leq n$.
Let $e$ and $f$ be idempotents from Lemma~\ref{lja2}. Then $e,f\in J_r$ since $m\leq r$.
But every idempotent $g\in T(X)$ such that $e-g-f$ fixes at least $2m$ elements, and so $g\notin J_r$
since $r<2m$.

Suppose that $r=n-1$ and $n=2m+1$ is odd. Then $n\geq7$ since we are working under the assumption
that $n\ne5$ or $r\ne4$.
We again consider idempotents $e$ and $f$ from Lemma~\ref{lja2}, which belong to $J_r$
since $m<n-1=r$. Note that $X=\{x_1,\ldots,x_m,y_1,\ldots,y_m,z\}$.
We may assume that $z\in A_m$ and $z\in B_1$.
Since $n\geq7$, we have $m\geq3$. Thus, the intersection of $A_m=\{x_m,y_m,z\}$ and $B_1=\{y_1,x_2,z\}$
is $\{z\}$, and so $zg=z$ by Lemma~\ref{lja2}.
Hence $g=\id_{X}\notin J_r$, which concludes the proof.
\end{proof}

\begin{theorem}\label{tdia}
Let $n\geq3$ and let $J_r$ be an ideal in $T(X)$ such that $2\leq r<n$. Then:
\begin{itemize}
  \item [\rm(1)] If $n=3$ or $n=5$ and $r=4$, then the diameter of $\cge(J_r)$ is $2$.
  \item [\rm(2)] In all other cases, the diameter of $\cge(J_r)$ is $3$.
\end{itemize}
\end{theorem}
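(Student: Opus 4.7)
The plan is to bound $\diam(\cge(J_r))$ separately in each case, using Lemmas~\lemref{ljo1}--\lemref{lja3} together with a small amount of direct case analysis in the two exceptional regimes.

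For case~(2) the upper bound is immediate: Lemma~\lemref{ljo1} gives $d(e,f)\leq 2$ whenever $\ima(e)\cap\ima(f)\neq\emptyset$, and Lemma~\lemref{lja1} gives $d(e,f)\leq 3$ when the images are disjoint. The matching lower bound is precisely Lemma~\lemref{lja3}, whose hypothesis ``$n\ge 4$ and ($n\neq 5$ or $r\neq 4$)'' covers exactly the range of case~(2) and which exhibits a pair of idempotents in $J_r$ admitting no idempotent mid-vertex, so $d(e,f)\ge 3$.

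In case~(1) the lower bound $\diam\ge 2$ is trivial, since for distinct $a,b\in X$ the constants $c_a,c_b\in J_r$ satisfy $c_ac_b=c_b\neq c_a=c_bc_a$ and so are non-adjacent vertices of $\cge(J_r)$. The upper bound --- that every pair of non-central idempotents $e,f\in J_r$ shares a common idempotent neighbour --- is the main content. Overlapping-image pairs are handled by Lemma~\lemref{ljo1}, so I assume that the images are disjoint. When $n=3$ and $r=2$, disjointness forces $\rank(e)+\rank(f)\leq 3$; a short enumeration of the placements of the third point of $X$ produces an explicit rank-$2$ idempotent $g$ fixing each constant's image and folding the remaining point into the appropriate class of the other idempotent. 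When $n=5$ and $r=4$, Lemma~\lemref{ljo2} disposes of every configuration in which some $(x,y)\in\ima(e)\times\ima(f)$ lies in $\Ker(e)\cap\Ker(f)$; the remaining ``swap'' configurations fit the setup of Lemma~\lemref{lja2} with $m=2$ (and a rank-$(2,3)$ analogue).

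The main obstacle lies in these swap configurations. Lemma~\lemref{lja2}(1) forces any candidate connecting idempotent $g$ to fix the four distinguished reps $x_1,x_2,y_1,y_2$, leaving only the fifth point $z$ to be assigned. The crucial observation is that for $n=5$ and $r=4$ no intersection $A_i\cap B_j$ equals the singleton $\{z\}$, so Lemma~\lemref{lja2}(2) does \emph{not} force $zg=z$; one may therefore set $zg$ equal to any non-$z$ element of the intersection of the $e$-class and $f$-class containing $z$, and this produces a genuine rank-$4$ idempotent $g\in J_r$ with $e-g-f$. The remaining verification is to check case-by-case that, for each placement of $z$ among the two kernel partitions, the relevant intersection is non-trivial --- it is precisely the smallness of $n=5$ together with $r=n-1$ that prevents the collapse to $g=\id$ which occurs for $n\ge 7$ and drives Lemma~\lemref{lja3}.
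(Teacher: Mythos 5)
Your proposal is correct, and for part (2) it coincides exactly with the paper's proof: upper bound from Lemmas~\ref{lem:ljo1} and~\ref{lem:lja1}, lower bound from Lemma~\ref{lem:lja3}, whose hypothesis does match the range of case~(2) precisely. Where you genuinely diverge is part (1): the paper disposes of the cases $n=3$ and $(n,r)=(5,4)$ with a single sentence citing a GRAPE/GAP computation, whereas you give a conceptual argument. Your argument holds up. For $n=3$ (so $r=2$) and for all rank pairs $(1,k)$ when $(n,r)=(5,4)$, a constant endpoint makes $\Ker(e)=X\times X$, so Lemma~\ref{lem:ljo2} always applies; for the disjoint-image rank-$(2,2)$ ``swap'' configuration your key observation is right: each of $A_i$ and $B_j$ containing the fifth point $z$ already holds two of the distinguished representatives, so $A_i\cap B_j\supsetneq\{z\}$, the hypothesis of Lemma~\ref{lem:lja2}(2) fails, and sending $z$ to the other element of that intersection yields a rank-$4$ idempotent $g$ with $e-g-f$ --- this is exactly the negation of the mechanism ($m\geq3$ forcing $A_i\cap B_j=\{z\}$) that drives Lemma~\ref{lem:lja3}, so your explanation of \emph{why} $(5,4)$ is exceptional is the right one. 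The only loose joint is the disjoint-image rank-$(2,3)$ case: Lemma~\ref{lem:lja2} does not literally apply there (it assumes equal ranks and a cyclic linking pattern), so your ``rank-$(2,3)$ analogue'' needs its own short check; it does go through --- if Lemma~\ref{lem:ljo2} fails everywhere the configuration is forced up to symmetry, and collapsing one $y_i$ onto another in the doubleton class of $e$ gives a commuting rank-$4$ idempotent --- but you should write that enumeration out rather than gesture at it. The trade-off between the two approaches is the usual one: the paper's computer check is shorter and certain, while your argument is longer but explains the exceptional parameters and would survive without machine assistance.
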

\begin{proof}
Suppose $n=3$ or $n=5$ and $r=4$. In these special cases, we obtained the desired result using GRAPE \cite{So06},
which is a package for GAP \cite{Scel92}.

Let $n\geq4$ and suppose that $n\ne5$ or $r\ne4$. By Lemmas~\ref{ljo1} and~\ref{lja1},
the diameter of $\cge(J_r)$ is at most $3$. By Lemma~\ref{lja3},
the diameter of $\cge(J_r)$ is at least $3$. Thus the diameter of $\cge(J_r)$
is $3$, which concludes the proof of~(2).
\end{proof}

\subsection{Commuting Graphs of Proper Ideals of $T(X)$}\label{ssprop}
In this subsection, we determine the diameter of every proper ideal of $T(X)$.
The ideal $J_1$ consists of the constant transformations,
so $\cg(J_1)$ is the graph with $n$ isolated vertices. Thus $J_1$ is not connected and its
diameter is $\infty$. Therefore, for the remainder of this subsection,
we assume that $n\geq3$ and $2\leq r<n$.

It follows from Lemma~\ref{lad1} and Theorem~\ref{tdia} that the diameter of $\cg(J_r)$ is at most $5$.
We will prove that this diameter is in fact $5$ except when $n=3$ or $n\in\{5,6,7\}$ and $r=4$.
It also follows from Lemma~\ref{lad1} that if $e$ and $f$ are idempotents in $J_r$, then
the distance between $e$ and $f$ in $\cg(J_r)$ is the same as the distance between $e$ and $f$
in $\cge(J_r)$. So no ambiguity will arise when we talk about the distance
between idempotents in $J_r$.

For $a\in T(X)$ and $x,y\in X$, we will write $x\ara y$ when $xa=y$.

\begin{lemma}\label{lad2}
Let $a,b\in T(X)$. Then $ab=ba$ if and only if for all $x,y\in X$, $x\ara y$ implies $xb\ara yb$.
\end{lemma}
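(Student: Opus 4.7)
The plan is to prove both directions by chasing the definitions of composition and of the relation $\ara$. Recall that $x \ara y$ means $xa = y$, and since we compose left to right, $x(ab) = (xa)b$.

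For the forward direction, I would assume $ab = ba$ and pick any $x,y \in X$ with $x \ara y$, i.e.\ $xa = y$. Then a direct computation gives
\[
yb \;=\; (xa)b \;=\; x(ab) \;=\; x(ba) \;=\; (xb)a,
\]
which is exactly the statement $xb \ara yb$.

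For the backward direction, I would assume the implication holds for all $x,y$ and show the two functions $ab$ and $ba$ agree on every point of $X$. Given any $x \in X$, set $y := xa$, so trivially $x \ara y$. The hypothesis then yields $(xb)a = yb$, which rewrites as $x(ba) = (xa)b = x(ab)$. Since $x$ was arbitrary, $ab = ba$.

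There is no real obstacle here; the lemma is essentially a restatement of commutativity in terms of the functional relation $\ara$, and the proof is a one-line computation in each direction. The only thing to be careful about is the left-to-right composition convention set at the start of Section~\ref{stx}, which makes the chain $(xa)b = x(ab)$ read in the natural order. I would simply present the two calculations displayed above, with a brief sentence introducing each direction.
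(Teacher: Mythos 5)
Your proof is correct and follows essentially the same route as the paper's: the forward direction is the computation $yb=(xa)b=x(ab)=x(ba)=(xb)a$, and the converse applies the hypothesis to the pair $(x,xa)$ to conclude $(xb)a=(xa)b$ for all $x$. No issues.
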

\begin{proof}
Suppose $ab=ba$. Let $x,y\in X$ with $x\ara y$, that is, $y=xa$. Then, since $ab=ba$, we have
$yb=(xa)b=x(ab)=x(ba)=(xb)a$, and so $xb\ara yb$.

Conversely, suppose $x\ara y$ implies $xb\ara yb$ for all $x,y\in X$. Let $x\in X$.
Since $x\ara xa$, we have $xb\ara (xa)b$.
But this means that $(xb)a=(xa)b$, which implies $ab=ba$.
\end{proof}

Let $a\in T(X)$. Suppose $x_1,\ldots,x_m$ are pairwise distinct elements of $X$
such that $x_ia=x_{i+1}$ ($1\leq i<m$) and $x_ma=x_1$. We will then say that $a$
contains a cycle $(x_1\,x_2\ldots\,x_m)$.

\begin{lemma}\label{lad0}
Let $a\in J_r$ be a transformation containing a unique cycle $(x_1\,x_2\ldots\,x_m)$.
Let $e\in J_r$ be an idempotent such that $ae=ea$. Then
$x_ie=x_i$ for every $i\in\{1,\ldots,m\}$.
\end{lemma}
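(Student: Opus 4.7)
The plan is to exploit Lemma~\ref{lad2}: commutativity of $a$ and $e$ forces the map $e$ to carry arrows of $a$ to arrows of $a$. Since the cycle $(x_1\,x_2\,\ldots\,x_m)$ is the whole of the ``periodic part'' of $a$, this will pin down what $e$ can do on $\{x_1,\ldots,x_m\}$.

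First, I would apply Lemma~\ref{lad2} to the arrows $x_1\ara x_2\ara\cdots\ara x_m\ara x_1$ of the cycle, obtaining
\[
x_1e\ara x_2e\ara\cdots\ara x_me\ara x_1e.
\]
In particular, $(x_1e)a^m=x_1e$, so $x_1e$ is a periodic point of $a$ and hence lies in \emph{some} cycle of $a$. Since $a$ contains a unique cycle, namely $(x_1\,x_2\,\ldots\,x_m)$, we conclude $x_1e\in\{x_1,\ldots,x_m\}$. The same argument applied to any $x_i$ in place of $x_1$ shows $x_ie\in\{x_1,\ldots,x_m\}$ for every $i$.

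Next I would analyse the restriction of $e$ to the cycle. Writing $x_ie=x_{\pi(i)}$ for a map $\pi:\{1,\ldots,m\}\to\{1,\ldots,m\}$, the relations $x_ie\ara x_{i+1}e$ become $x_{\pi(i)+1}=x_{\pi(i+1)}$ (indices modulo $m$), so $\pi(i+1)\equiv\pi(i)+1\pmod{m}$. Hence $\pi$ is a cyclic rotation: there is a fixed $k$ with $x_ie=x_{i+k}$ for all $i$ (indices mod $m$).

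Finally, I would use that $e$ is an idempotent. From $e^2=e$ we get $x_i=x_ie^2=x_{i+k}e=x_{i+2k}$ for every $i$, so $2k\equiv k\pmod{m}$, which forces $k\equiv0\pmod{m}$. Therefore $x_ie=x_i$ for all $i\in\{1,\ldots,m\}$. The main (very mild) obstacle is the bookkeeping in the step identifying $\pi$ as a rotation; everything else follows painlessly from Lemma~\ref{lad2} and the uniqueness of the cycle.
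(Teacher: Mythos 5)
Your proof is correct and takes essentially the same route as the paper: apply Lemma~\ref{lad2} to the arrows of the cycle, use uniqueness of the cycle to conclude that $e$ maps $\{x_1,\ldots,x_m\}$ onto itself (the paper phrases this as $(x_1e\,x_2e\ldots\,x_me)$ being a cycle of $a$, hence equal to $(x_1\ldots x_m)$, so that each $x_i$ lies in $\ima(e)$ and is fixed because $e$ is idempotent, while you package the same facts as ``$e$ restricts to a rotation by $k$'' and force $k\equiv0$). One small slip in your final chain: it should read $x_ie=x_ie^2=x_{i+k}e=x_{i+2k}$, i.e.\ $x_{i+k}=x_{i+2k}$, which is what gives $2k\equiv k\pmod m$; as written, ``$x_i=x_ie^2$'' presupposes the conclusion, and the congruence it would literally yield, $2k\equiv0\pmod m$, does not by itself force $k\equiv0$.
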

\begin{proof}
Since $a$ contains $(x_1\,x_2\ldots\,x_m)$, we have $x_1\ara x_2\ara\cdots\ara x_m\ara x_1$. Thus, by Lemma~\ref{lad2},
\[
x_1e\ara x_2e\ara\cdots\ara x_me\ara x_1e.
\]
Thus $(x_1e\,x_2e\ldots\,x_me)$ is a cycle in $a$, and is therefore equal to
$(x_1\,x_2\ldots\,x_m)$. Hence, for every $i\in\{1,\ldots,m\}$, there exists $j\in\{1,\ldots,m\}$
such that $x_i=x_je$,
and so $x_ie=(x_je)e=x_j(ee)=x_je=x_i$.
\end{proof}

To construct transformations
$a,b\in J_r$ such that the distance between $a$ and $b$ is $5$, it will be convenient
to introduce the following notation.

\begin{nota}\label{ndi5}
{\rm
Let $x_1,\ldots,x_m,z_1,\ldots,z_p$ be pairwise distinct elements of $X$, and let $s$ be fixed such that $1\leq s<p$.
We will denote by
\begin{equation}\label{e1ndi5}
a=(*\,z_s\ran(z_p\,z_{p-1}\ldots\,z_1\,x_1\ran(x_1\,x_2\ldots\,x_m)
\end{equation}
the transformation $a\in T(X)$ such that
\begin{align}
z_pa&=z_{p-1},\,z_{p-1}a=z_{p-2},\ldots,z_2a=z_1,\,z_1a=x_1,\notag\\
x_1a&=x_2,\,x_2a=x_3,\ldots,\,x_{m-1}a=x_m,\,x_ma=x_1,\notag
\end{align}
and $ya=z_s$ for all other $y\in X$.
Suppose $w\in X$
such that $w\notin\{x_1,\ldots,x_m,z_1,\ldots,z_p\}$ and $1\leq t<p$ with $t\ne s$. We will denote by
\begin{equation}\label{e2ndi5}
b=(*\,z_s\ran(w\,z_t\ran(z_p\,z_{p-1}\ldots\,z_1\,x_1\ran(x_1\,x_2\ldots\,x_m)
\end{equation}
the transformation $b\in T(X)$ that is defined as $a$ in (\ref{e1ndi5}) except that $wb=z_t$.
}
\end{nota}

\begin{lemma}\label{lad3}
Let $a\in J_r$ be the transformation defined in {\rm(\ref{e1ndi5})} such that  $m+p>r$.
Let $e\in J_r$ be an idempotent such that $ae=ea$. Then:
\begin{itemize}
  \item [\rm(1)] $x_ie=x_i$ for every $i\in\{1,\ldots,m\}$.
  \item [\rm(2)] $z_je=x_{m-j+1}$ for every $j\in\{1,\ldots,p\}$.
  \item [\rm(3)] $ye=x_{m-s}$ for every $y\in X-\{x_1,\ldots,x_m,z_1,\ldots,z_p\}$.
\end{itemize}
(We assume that for every integer $u$, $x_u=x_v$, where $v\in\{1,\ldots,m\}$ and $u\equiv v\pmod m$.)
\end{lemma}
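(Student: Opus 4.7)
The plan is as follows. Part~(1) is immediate from Lemma~\ref{lad0}: every $y\in X-\{x_1,\ldots,x_m\}$ iterates under $a$ through the chain $z_p\to z_{p-1}\to\cdots\to z_1\to x_1$ into the cycle $(x_1\,x_2\ldots\,x_m)$, so this is the unique cycle of $a$ and Lemma~\ref{lad0} gives $x_ie=x_i$ for all $i$.

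For part~(2) I would induct on $j$, and the base case $j=1$ carries the entire force of the hypothesis $m+p>r$. Lemma~\ref{lad2} combined with part~(1) gives $(z_1e)a=x_1e=x_1$, so $z_1e\in a^{-1}(x_1)=\{x_m,z_1\}$. If $z_1e$ were equal to $z_1$, then, since $a^{-1}(z_j)=\{z_{j+1}\}$ for $1\le j<p$, repeated application of Lemma~\ref{lad2} would force $z_je=z_j$ for every $j\in\{1,\ldots,p\}$; together with part~(1), $e$ would then have at least $m+p$ fixed points, contradicting $\rank(e)\le r<m+p$. Hence $z_1e=x_m$.

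For the inductive step from $k$ to $k+1$ (with $1\le k<p$), Lemma~\ref{lad2} yields $(z_{k+1}e)a=z_ke=x_{m-k+1}$. Off the cycle boundary the preimage $a^{-1}(x_{m-k+1})$ is the singleton $\{x_{m-k}\}$, so $z_{k+1}e=x_{m-k}$. At the boundary---i.e.\ when $m-k+1\equiv 1\pmod m$---the preimage is $\{x_m,z_1\}$; but idempotency gives $\ima(e)=\fix(e)$, and the base case shows $z_1\notin\fix(e)$, forcing $z_{k+1}e=x_m$, which is $x_{m-k}$ by the stated convention. Part~(3) then falls out of the same dichotomy: for $y\notin\{x_1,\ldots,x_m,z_1,\ldots,z_p\}$ we have $ya=z_s$, so part~(2) gives $(ye)a=x_{m-s+1}$, and reading off the preimage exactly as above yields $ye=x_{m-s}$.

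The only genuinely delicate point is the base case of part~(2); this is the unique step where a local $a$-preimage computation leaves two candidates ($x_m$ and $z_1$), and it is the unique place where the rank bound $m+p>r$ is used. After that, every step has a one-point preimage, except at $x_1$, where the idempotent identity $\ima(e)=\fix(e)$ disposes of the ambiguity automatically.
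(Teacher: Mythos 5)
Your overall strategy---part (1) via Lemma~\lemref{lad0}, pinning down $z_1e$ with the rank bound $m+p>r$, then propagating along the chain by preimage computations and disposing of the wrap-around ambiguity at $x_1$ via idempotency---is the same as the paper's, and parts (1) and (3), the inductive step of (2), and the boundary case $z_{k+1}e\in\{x_m,z_1\}$ are all handled correctly and essentially as in the published proof.

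However, the base case of (2), which you rightly flag as the crux, rests on a false claim. You assert $a^{-1}(z_j)=\{z_{j+1}\}$ for $1\le j<p$, but this fails at $j=s$: by the definition in (\ref{e1ndi5}), every $y\in X-\{x_1,\ldots,x_m,z_1,\ldots,z_p\}$ satisfies $ya=z_s$, so $a^{-1}(z_s)=\{z_{s+1}\}\cup\bigl(X-\{x_1,\ldots,x_m,z_1,\ldots,z_p\}\bigr)$, which is not a singleton whenever $X$ has points outside the named ones (as it does in several applications of this lemma, e.g.\ Lemma~\lemref{lad6} and the first case of Theorem~\thmref{tdia2} when $2m<n$). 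Consequently the chain of deductions $z_1e=z_1\Rightarrow z_2e=z_2\Rightarrow\cdots\Rightarrow z_pe=z_p$ breaks at $j=s$: from $z_se=z_s$ you may only conclude $z_{s+1}e\in\{z_{s+1}\}\cup(X-\{x_1,\ldots,x_m,z_1,\ldots,z_p\})$. The gap is repairable---if $z_{s+1}e=y$ for such a $y$, then either $s+1<p$ and $z_{s+2}e$ would need an $a$-preimage of $y$, which does not exist since $y\notin\ima(a)$, or $s+1=p$ and $\{x_1,\ldots,x_m,z_1,\ldots,z_{p-1},y\}$ is still a set of $m+p$ distinct elements of $\ima(e)$---but as written the step fails. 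The paper sidesteps the issue entirely by arguing more softly: from $(z_1e)a=x_1$ it concludes only that $z_1e=x_m$ or $z_1e\notin\{x_1,\ldots,x_m\}$, and in the latter case shows that all $z_je$ stay off the cycle, so that $\{x_1,\ldots,x_m,z_1e,\ldots,z_pe\}$ already consists of $m+p$ distinct elements of $\ima(e)$; no exact identification of the $z_je$ (and hence no singleton-preimage claim) is needed.
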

\begin{proof}
Statement (1) follows from Lemma~\ref{lad0}.
By the definition of $a$, we have
\[
z_p\ara z_{p-1}\ara\cdots\ara z_1\ara x_1.
\]
Thus, by Lemma~\ref{lad2},
\[
z_pe\ara z_{p-1}e\ara\cdots\ara z_1e\ara x_1e=x_1.
\]
Since $z_1e\ara x_1$, either $z_1e=x_m$ or $z_1e\notin\{x_1,\ldots,x_m\}$.
We claim that the latter is impossible. Indeed, suppose $z_1e\notin\{x_1,\ldots,x_m\}$.
Then $z_je\notin\{x_1,\ldots,x_m\}$ for every $j\in\{1,\ldots,p\}$. Thus the set $\{x_1,\ldots,x_m,z_1e,\ldots,z_pe\}$
is a subset of $\ima(e)$ with $m+p$ elements. But this implies that $e\notin J_r$ (since $m+p>r$), which is a contradiction.
We proved the claim. Thus $z_1e=x_m$. Now, $z_2e\ara z_1e=x_m$, which implies $z_2e=x_{m-1}$. Continuing this way,
we obtain $z_3e=x_{m-2},\,z_4e=x_{m-3},\ldots$. (A special argument is required when $j=qm+1$ for some $q\geq1$.
Suppose $q=1$, that is, $j=m+1$.
Then $z_je\ara z_{j-1}e=z_me=x_1$, and so either $z_je=x_m$ or $z_je=z_1$.
But the latter is impossible
since we would have $x_m=z_1e=z_j(ee)=z_je=z_1$, which is a contradiction. Hence, for $j=m+1$,
we have $z_je=x_m$. Assuming, inductively, that $z_je=x_m$ for $j=qm+1$, we prove
by a similar argument that $z_je=x_m$ for $j=(q+1)m+1$.) This concludes the proof of (2).

Let $y\in X-\{x_1,\ldots,x_m,z_1,\ldots,z_p\}$.
Then $y\ara z_s$, and so $ye\ara z_se=x_{m-s+1}$. Suppose $s$ is not a multiple of $m$. Then $x_{m-s+1}\ne x_1$,
and so $ye\ara x_{m-s+1}$ implies $ye=x_{m-s}$. Suppose $s$ is a multiple of $m$. Then $ye\ara x_{m-s+1}=x_1$,
and so either $ye=x_m$ or $ye=z_1$.
But the latter is impossible
since we would have $x_m=z_1e=y(ee)=ye=z_1$, which is a contradiction. Hence, for $s$ that is a multiple of $m$,
we have $ye=x_m$, which concludes the proof of (3).
\end{proof}

The proof of the following lemma is almost identical to the proof of Lemma~\ref{lad3}.

\begin{lemma}\label{lad4}
Let $b\in J_r$ be the transformation defined in {\rm(\ref{e2ndi5})} such that $m+p>r$.
Let $e\in J_r$ be an idempotent such that $be=eb$. Then:
\begin{itemize}
  \item [\rm(1)] $x_ie=x_i$ for every $i\in\{1,\ldots,m\}$.
  \item [\rm(2)] $z_je=x_{m-j+1}$ for every $j\in\{1,\ldots,p\}$.
  \item [\rm(3)] $we=x_{m-t}$.
  \item [\rm(4)] $ye=x_{m-s}$ for every $y\in X-\{x_1,\ldots,x_m,z_1,\ldots,z_p,w\}$.
\end{itemize}
\end{lemma}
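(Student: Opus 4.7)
My plan is to mimic the proof of Lemma~\ref{lad3} almost verbatim, exploiting the fact that the transformation $b$ differs from the transformation $a$ of (\ref{e1ndi5}) only at the single point $w$, where $wb=z_t$ instead of $wb=z_s$. In particular, $b$ still contains the unique cycle $(x_1\,x_2\ldots\,x_m)$ and acts on $\{z_1,\ldots,z_p\}$ in exactly the same way that $a$ does, so parts (1) and (2) will require no new ideas.

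Part (1) is immediate from Lemma~\ref{lad0} applied to the unique cycle of $b$. For (2), I apply Lemma~\ref{lad2} to the chain $z_p\arb z_{p-1}\arb\cdots\arb z_1\arb x_1$ to obtain $z_pe\arb\cdots\arb z_1e\arb x_1e=x_1$. If $z_1e\notin\{x_1,\ldots,x_m\}$, then none of the $z_je$'s would lie in $\{x_1,\ldots,x_m\}$ either, and $\{x_1,\ldots,x_m,z_1e,\ldots,z_pe\}\subseteq\ima(e)$ would force $|\ima(e)|\geq m+p>r$, contradicting $e\in J_r$. Hence $z_1e=x_m$, and propagating along the chain (handling the wrap-around steps $j=qm+1$ with the same idempotency argument used in Lemma~\ref{lad3}) yields $z_je=x_{m-j+1}$.

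For (3), the new ingredient is the single edge $w\arb z_t$. By Lemma~\ref{lad2}, $we\arb z_te$, and by (2), $z_te=x_{m-t+1}$, so $(we)b=x_{m-t+1}$. Inspecting the definition of $b$, the only preimages of $x_{m-t+1}$ under $b$ are $x_{m-t}$, together with $z_1$ in the exceptional case $m\mid t$ (where $x_{m-t+1}=x_1$): indeed, $w$ itself maps under $b$ to $z_t\neq x_{m-t+1}$, and every other point outside $\{x_1,\ldots,x_m,z_1,\ldots,z_p\}$ maps to $z_s\neq x_{m-t+1}$. If $we=z_1$ then idempotency of $e$ together with (2) gives $we=(we)e=z_1e=x_m$, contradicting $z_1\ne x_m$; so in every case $we=x_{m-t}$.

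Finally, part (4) transfers unchanged from part (3) of Lemma~\ref{lad3}, since $yb=z_s$ for every $y\in X-\{x_1,\ldots,x_m,z_1,\ldots,z_p,w\}$. The only genuinely new step is (3), and the main (mild) obstacle there is the bookkeeping in the exceptional wrap-around case $m\mid t$, handled exactly as in the corresponding exceptional step inside the proof of Lemma~\ref{lad3}.
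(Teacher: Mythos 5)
Your proposal is correct and follows exactly the route the paper intends: the paper gives no separate proof of Lemma~\ref{lad4}, stating only that it is ``almost identical to the proof of Lemma~\ref{lad3},'' and your write-up supplies precisely that, with parts (1), (2), and (4) carried over verbatim and the only new step, part (3), handled by the same preimage-plus-idempotency argument (including the wrap-around case $m\mid t$) used in Lemma~\ref{lad3}.
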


\begin{lemma}\label{lad6}
Let $n\in\{5,6,7\}$ and $r=4$. Then there are $a,b\in J_4$ such that the distance between $a$ and $b$ in $\cg(J_4)$ is at least $4$.
\end{lemma}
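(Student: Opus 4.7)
The plan is to exhibit $a,b\in J_4$ for each $n\in\{5,6,7\}$ so that no path of length at most $3$ joins them in $\cg(J_4)$. By Lemma~\ref{lad1}, any such short path could be taken to have all inner vertices idempotent, so it suffices to preclude paths $a-e-b$ and $a-e_1-e_2-b$ with $e,e_1,e_2$ idempotents of $J_4$, together with the direct check that $ab\ne ba$.

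For $n\in\{6,7\}$, I would apply Notation~\ref{ndi5} with $m=2$ and $p=n-3$, so that $m+p>r=4$, the hypotheses of Lemmas~\ref{lad3} and~\ref{lad4} are met, and there is a unique element $w\in X-\{x_1,x_2,z_1,\ldots,z_p\}$. Take $a$ as in~(\ref{e1ndi5}) with $s=1$ and $b$ as in~(\ref{e2ndi5}) with $t=2$. A one-line verification gives $ab\ne ba$. For distance $2$: an idempotent $e\in J_4$ commuting simultaneously with $a$ and $b$ would satisfy $we=x_{m-s}=x_1$ by Lemma~\ref{lad3}(3) and $we=x_{m-t}=x_2$ by Lemma~\ref{lad4}(3), a contradiction. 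For distance $3$: Lemmas~\ref{lad3} and~\ref{lad4} completely determine $e_1$ (commuting with $a$, with $we_1=x_1$) and $e_2$ (commuting with $b$, with $we_2=x_2$); since $x_1e_2=x_1$ and $x_2e_1=x_2$, one computes $w(e_1e_2)=x_1\ne x_2=w(e_2e_1)$, contradicting the putative edge $e_1-e_2$.

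The case $n=5$ is genuinely different, since then the support of $a$ exhausts $X$ and no free element $w$ is available. Here I would either modify Notation~\ref{ndi5}---e.g.\ by replacing the single cycle $(x_1\,x_2)$ with fixed points or two smaller cycles, freeing up an element at the cost of re-proving analogues of Lemmas~\ref{lad3} and~\ref{lad4}---or simply run a GAP/GRAPE enumeration in the spirit of the proof of Theorem~\ref{tdia}(1), since $|J_4|$ is quite small when $|X|=5$.

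The main obstacle is twofold. First, the $n=5$ case must be disposed of separately, as the explicit construction above is too tight to accommodate the ``marker'' element $w$. Second, for $n\in\{6,7\}$, the delicate point is verifying that the explicitly forced $e_1$ and $e_2$ genuinely fail to commute: this requires choosing $s,t$ so that $x_{m-s}\ne x_{m-t}$ (hence putting them in distinct residue classes modulo $m=2$), and a careful check at the wrap-around indices $j\equiv 1\pmod m$ where Lemma~\ref{lad3}(2) already needs a special argument, to avoid any spurious coincidence that would let $e_1e_2=e_2e_1$ slip through.
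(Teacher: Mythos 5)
Your overall strategy is the paper's: reduce to idempotent inner vertices via Lemma~\ref{lad1}, use Lemmas~\ref{lad3} and~\ref{lad4} to force the idempotents adjacent to $a$ and to $b$, and check that the forced idempotents are distinct and fail to commute. Your $n=6$ instance is correct and complete. However, there are two genuine gaps. First, for $n=7$ your parameters $m=2$, $p=n-3=4$ give $\ima(a)=\{z_1,z_2,z_3,x_1,x_2\}$, so $\rank(a)=m+p-1=5$ and $a\notin J_4$; the hypotheses of Lemmas~\ref{lad3} and~\ref{lad4} (which require $a\in J_r$) are therefore not met. Since membership in $J_4$ forces $m+p\leq 5$ while Lemma~\ref{lad3} needs $m+p>4$, you must take $m+p=5$ exactly; for $n=7$ this leaves \emph{two} elements outside $\{x_1,x_2,z_1,\ldots,z_p\}$ rather than one, but the uniqueness of $w$ is never actually used: the second leftover element is governed by Lemma~\ref{lad3}(3) and Lemma~\ref{lad4}(4), is sent to $x_{m-s}$ by both forced idempotents, and does not disturb the argument. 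So $n=7$ is repairable by taking $p=3$, but as written it fails. Second, the $n=5$ case is not done at all: you exhibit no pair $a,b$ and do not carry out the computation you allude to, so for $n=5$ the statement remains unproved.

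For comparison, the paper uses a single pair for all three values of $n$, namely $a=(*\,4\ran(3\,4\,1\ran(1\,2)$ and $b=(*\,1\ran(2\,1\,3\ran(3\,4)$, i.e.\ $m=p=2$, and invokes Lemma~\ref{lad3} to conclude that the adjacent idempotents are uniquely determined. But there $m+p=4=r$, so the hypothesis $m+p>r$ of Lemma~\ref{lad3} is not satisfied, and the forcing genuinely fails: for $n=6$, for instance, the rank-$4$ idempotent $e$ fixing $1,2,3,4$ and sending $5,6$ to $3$ commutes with the paper's $a$, the rank-$2$ idempotent $f=(\{2,3,5,6\},3\ran(\{1,4\},4\ran$ (the one the paper itself produces) commutes with $b$, and a direct computation gives $ef=fe$, so $a-e-f-b$ has length $3$; an analogous length-$3$ path exists for $n=5$. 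So your instinct to insist on $m+p>r$, and your suspicion that $n=5$ (where $m+p=5$ would exhaust $X$) needs a separate treatment, are both sound and in fact identify a real defect in the paper's own argument; the price is that your write-up mis-parametrizes $n=7$ and leaves $n=5$ unresolved.
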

\begin{proof}
Let $a=(*\,4\ran(3\,4\,1\ran(1\,2)$ and $b=(*\,1\ran(2\,1\,3\ran(3\,4)$ (see Notation~\ref{ndi5}).
Suppose $e$ and $f$ are idempotents in $J_4$ such that $a-e$ and $f-b$. Then, by Lemma~\ref{lad3},
$e=(\{\ldots,3,1\},1\ran(\{4,2\},2\ran$ and $f=(\{\ldots,2,3\},3\ran(\{1,4\},4\ran$, where
``$\ldots$'' denotes ``$5$'' (if $n=5$), ``$5,6$'' (if $n=6$), and ``$5,6,7$'' (if $n=7$). Then
$e$ and $f$ do not commute, and so $d(e,f)\geq2$. Thus $d(a,b)\geq4$ by Lemma~\ref{lad1}.
\end{proof}

\begin{lemma}\label{lad7}
Let $n\in\{6,7\}$ and $r=4$. Let $a\in J_4$ be a transformation that is not an idempotent. Then there is an
idempotent $e\in J_4$ commuting with $a$ such that $\rank(e)\ne3$ or $\rank(e)=3$ and $ye^{-1}=\{y\}$
for some $y\in\ima(e)$.
\end{lemma}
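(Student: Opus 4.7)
My plan is to case-split on the cycle structure of $a$ on its stable image $C:=\bigcap_{k\geq 1}\ima(a^k)$, the union of the cycles of the functional digraph of $a$. Since $a\in J_4$ we have $|C|\leq\rank(a)\leq 4$. The first observation is that if $a$ fixes some $c\in C$, then the constant transformation $e$ with image $\{c\}$ is a rank-$1$ idempotent in $J_4$, and Lemma~\ref{lcen} gives $ae=ea$; the conclusion holds since $\rank(e)=1\neq 3$. So I may assume $a|_C$ is fixed-point-free.

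If $|C|\in\{2,4\}$, I take $e=a^p$ for a positive integer $p$ such that $a^p$ is idempotent; then $e\in J_4$, $e$ commutes with $a$ automatically, and $\rank(e)=|C|\in\{2,4\}\neq 3$. The only remaining configuration is $|C|=3$ with $a|_C$ a single $3$-cycle, say $c_1\to c_2\to c_3\to c_1$. Write $P:=X\setminus C$, so $|P|\in\{3,4\}$. Since $\ima(a)=Ca\cup Pa=C\cup Pa$ and $\rank(a)\leq 4$, we get $|Pa\setminus C|\leq 1$. My goal is to exhibit either a rank-$4$ idempotent in $J_4$ commuting with $a$, or a rank-$3$ idempotent in $J_4$ commuting with $a$ with a singleton preimage class.

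The main construction: given a \emph{distance-$1$ leaf} $y\in P$, meaning $ya\in C$ and no $z\in P$ satisfies $za=y$, I let $F=C\cup\{y\}$ and define an idempotent $e$ with image $F$ by taking $e|_F=\id_F$ and, for $z\in P\setminus\{y\}$, setting $ze$ to be the cycle predecessor in $C$ of $(za)e$, walked back recursively until the trajectory lands in $C$. Lemma~\ref{lcen} then certifies $ae=ea$, and $\rank(e)=4$. Such a leaf $y$ is guaranteed whenever either $Pa\subseteq C$ (any $y\in P$ works) or $|Pa\setminus C|=1$ with unique element $y^*\in P\cap Pa$ and some $y\in P\setminus\{y^*\}$ has $ya\in C$; in the latter situation any such $y$ is automatically a leaf, because $Pa\subseteq C\cup\{y^*\}$ and $y\neq y^*$ force $y\notin Pa$.

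The main obstacle is the residual subcase in which $|Pa\setminus C|=1$ and every $z\in P\setminus\{y^*\}$ satisfies $za=y^*$. Here no rank-$4$ idempotent of the above form exists, since the only admissible choice is $F=C\cup\{y^*\}$ and the predecessors of $y^*$ in $P$ have no preimage in $F$ under $a$. In this case I write $c:=y^*a\in C$, let $c'$ and $c''$ be the successive cycle predecessors of $c$ under $a|_C$, and take the rank-$3$ idempotent $e$ defined by $e|_C=\id_C$, $y^*e=c'$, and $ze=c''$ for every $z\in P\setminus\{y^*\}$. A direct verification via Lemma~\ref{lcen} on the blocks $\{c\}$, $\{c',y^*\}$, $\{c'',z_1,\ldots,z_k\}$ shows $ae=ea$, and $ce^{-1}=\{c\}$ is the required singleton since no element other than $c$ maps to $c$ under $e$.
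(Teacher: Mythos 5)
Your proof is correct and follows essentially the same route as the paper's: a fixed point gives the rank-$1$ constant, a cyclic part of size $2$ or $4$ gives the idempotent power $a^p$, and your analysis of the $3$-cycle case reproduces the paper's two constructions (your ``distance-$1$ leaf'' subcase is the paper's rank-$4$ idempotent obtained by adjoining one extra fixed point outside the cycle, and your residual subcase --- all of $P\setminus\{y^*\}$ funnelling into $y^*$ --- is exactly the paper's rank-$3$ idempotent with the singleton preimage class). The only difference is organizational: phrasing the split in terms of $C$, $P$, and $|Pa\setminus C|\le 1$ makes the exhaustiveness of the cases and the commuting checks via Lemma~\ref{lcen} more transparent than the paper's element-by-element definitions.
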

\begin{proof}
If $a$ fixes some $x\in X$, then $a$ commutes with $e=(X,x\ran$ of rank $1$.
Suppose $a$ has no fixed points. Let $p$ be a positive integer such that $a^p$ is an idempotent.
If $a$ contains a unique cycle $(x_1\,x_2)$, then $e=a^p$ has rank $2$.
If $a$ contains a unique cycle $(x_1\,x_2\,x_3\,x_4)$
or two cycles $(x_1\,x_2)$ and $(y_1\,y_2)$ with $\{x_1,x_2\}\cap\{y_1,y_2\}=\emptyset$,
then $e=a^p$ has rank $4$.

Suppose $a$ contains a unique cycle $(x_1\,x_2\,x_3)$.
Define $e\in T(X)$ as follows. Set $x_ie=x_i$, $1\leq i\leq3$.

Suppose there are
$y,z\in X-\{x_1,x_2,x_3\}$ such that $ya=z$ and $za=x_i$ for some $i$.
We may assume that $za=x_1$.
Define $ze=x_3$ and $ye=x_2$. Let $u$ and $w$ be the two remaining elements in $X$
(only $u$ remains when $n=6$). Since $\rank(a)\leq4$, we have $\{u,w\}a\subseteq\{z,x_1,x_2,x_3\}$.
Suppose $ua=wa=z$. Define $ue=x_2$ and $we=x_2$.
Then $e$ is an idempotent of rank $3$ such that $ae=ea$ and $x_1e^{-1}=\{x_1\}$.
Suppose $ua$ or $wa$ is in $\{x_1,x_2,x_3\}$, say $ua\in\{x_1,x_2,x_3\}$.
Define $ue=u$, and $we=x_{i-1}$ (if $wa=x_i$), where $x_{i-1}=x_3$ if $i=1$, or $we=x_2$ (if $wa=z$).
Then $e$ is an idempotent of rank $4$ such that $ae=ea$.

Suppose that for every $y\in X-\{x_1,x_2,x_3\}$, $ya\in\{x_1,x_2,x_3\}$.
Select $z\in X-\{x_1,x_2,x_3\}$ and define $ze=z$. For every $y\in X-\{z,x_1,x_2,x_3\}$,
define $ye=x_{i-1}$ if $ya=x_i$.
Then $e$ is an idempotent of rank $4$ such that $ae=ea$.

Since $a\in J_4$, we have exhausted all possibilities, and the result follows.
\end{proof}

\begin{lemma}\label{lad8}
Let $n\in\{6,7\}$ and $r=4$. Then for all $a,b\in J_4$, the distance between $a$ and $b$ in $\cg(J_4)$ is at most $4$.
\end{lemma}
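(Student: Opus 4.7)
The plan is to reduce the problem to bounding the distance between two idempotents in the idempotent commuting graph, exploiting the structural freedom afforded by Lemma~\ref{lad7}.

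First I would dispense with the cases in which $a$ or $b$ is already an idempotent. If both are idempotents, Theorem~\ref{tdia} gives $d(a,b)\leq 3\leq 4$. If only one is, say $b$, Lemma~\ref{lad7} supplies an idempotent $e\in J_4$ with $a-e$, and Theorem~\ref{tdia} then yields $d(e,b)\leq 3$, so $d(a,b)\leq 4$. The essential case is when neither $a$ nor $b$ is an idempotent: apply Lemma~\ref{lad7} twice to obtain idempotents $e,f\in J_4$ with $a-e$ and $f-b$, each of rank different from $3$ or of rank $3$ with a singleton kernel class. It then suffices to prove $d(e,f)\leq 2$ in $\cge(J_4)$, since this gives $d(a,b)\leq 1+2+1=4$.

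To bound $d(e,f)$ I would split on whether $\ima(e)\cap\ima(f)$ is non-empty. If it is, Lemma~\ref{ljo1} finishes at once. Otherwise, assume the images are disjoint. The easy subcase is $\rank(e)=1$ (or symmetrically $\rank(f)=1$): writing $e=(X,x\rangle$ and setting $y:=xf\in\ima(f)$, one has $ye=x$ because $e$ is constant with value $x$, and $xf=y$ by the choice of $y$, so Lemma~\ref{ljo2} applies. This reduces the problem to $\rank(e),\rank(f)\in\{2,3,4\}$ with $\ima(e)\cap\ima(f)=\emptyset$; note also that $\rank(e)+\rank(f)\leq n\leq 7$. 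In each remaining subcase I would construct an explicit idempotent $g\in J_4$ commuting with both $e$ and $f$, by taking the kernel partition of $g$ to be a suitable coarsening of the common refinement of $\Ker(e)$ and $\Ker(f)$, and choosing the image points from $\ima(e)\cup\ima(f)$ so that the criterion of Lemma~\ref{lcen} (namely $\ima(e)g\subseteq\ima(e)$, $\ima(f)g\subseteq\ima(f)$, and the class-inclusion conditions) is satisfied on both sides.

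The main obstacle is keeping $\rank(g)\leq 4$ in the subcase $\rank(e)=\rank(f)=3$. Here the Lemma~\ref{lad7} proviso is decisive: at least one of $e,f$ has a singleton kernel class, and this singleton can be absorbed into a larger $g$-class without breaking either commutation condition. This is precisely the flexibility that is absent in the configuration of Lemma~\ref{lja2}, which was the source of the distance-$3$ obstruction in Lemma~\ref{lja3}. In the remaining subcases one of $\rank(e),\rank(f)$ is $2$ or $4$, and the bound $n\leq 7$ forces enough overlap between the kernel partitions that a rank-at-most-$4$ common-refinement-type $g$ can always be assembled. Finally, one verifies in each case that $g$ can be chosen distinct from $e$ and $f$ so that $e-g-f$ is a genuine length-two path in $\cg(J_4)$.
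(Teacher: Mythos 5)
Your plan coincides with the paper's proof: the same reduction via Lemma~\ref{lad7} to showing $d(e,f)\le 2$ for the two commuting idempotents, the same split on whether $\ima(e)\cap\ima(f)=\emptyset$ with Lemmas~\ref{ljo1} and~\ref{ljo2} disposing of the easy configurations (including the rank-one case exactly as you describe), and the same use of the singleton kernel class guaranteed by Lemma~\ref{lad7} to keep the connecting idempotent $g$ at rank at most $4$ in the rank-$3$ configurations. What you leave implicit is the explicit construction of $g$ in the remaining rank combinations $(2,2),(2,3),(2,4),(3,3),(3,4)$ --- which is where the paper spends the bulk of its effort, first checking in each configuration whether Lemma~\ref{ljo2} already applies and only then building $g$ by hand --- but your ``coarsening of the common refinement, with image points fixed in $\ima(e)\cup\ima(f)$'' description is precisely what those constructions amount to.
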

\begin{proof}
Let $a,b\in J_4$. If $a$ or $b$ is an idempotent, then $d(a,b)\leq4$ by Lemma~\ref{lad1} and Theorem~\ref{tdia}.
Suppose $a$ and $b$ are not idempotents. By Lemma~\ref{lad7}, there are idempotents $e,f\in J_4$
such that $ae=ea$, $bf=fb$, if $\rank(e)=3$, then
$ye^{-1}=\{y\}$ for some $y\in\ima(e)$, and if $\rank(f)=3$, then
$yf^{-1}=\{y\}$ for some $y\in\ima(f)$. We claim that there is an idempotent $g\in J_4$ such that $e-g-f$.
If $\ima(e)\cap\ima(f)\ne\emptyset$, then such an idempotent $g$ exists by Lemma~\ref{ljo1}.
Suppose $\ima(e)\cap\ima(f)=\emptyset$. Then, since $n\in\{6,7\}$, both $\rank(e)+\rank(f)\leq7$.
We may assume that $\rank(e)\leq\rank(f)$. There are six possible cases.
\vskip 1mm
\noindent{\bf Case 1.} $\rank(e)=1$.
\vskip 1mm
Then $e=(X,x\ran$ for some $x\in X$. Let $y=xf$.
Then $(x,y)\in\ima(e)\times\ima(f)$ and $(x,y)\in\Ker(e)\cap\Ker(f)$.
Thus, by Lemma~\ref{ljo2}, there is an idempotent $g\in J_4$ such that $e-g-f$.
\vskip 1mm
\noindent{\bf Case 2.} $\rank(e)=2$ and $\rank(f)=2$.
\vskip 1mm
We may assume that $e=(A_1,1\ran(A_2,2\ran$ and $f=(B_1,3\ran(B_2,4\ran$.
If $\{1,2\}\subseteq B_i$ or $\{3,4\}\subseteq A_i$ for some $i$, then
we can find $(x,y)\in\ima(e)\times\ima(f)$
such that $(x,y)\in\Ker(e)\cap\Ker(f)$, and so a desired idempotent $g$ exists by Lemma~\ref{ljo2}.
Otherwise, we may assume that $3\in A_1$ and $4\in A_2$. If $1\in B_1$ or $2\in B_2$, then
Lemma~\ref{ljo2} can be applied again. So suppose $1\in B_2$ and $2\in B_1$.
Now we have
\[
e=(\{\ldots,3,1\},1\ran(\{\ldots,4,2\},2\ran\mbox{ and }
f=(\{\ldots,2,3\},3\ran(\{\ldots,1,4\},4\ran.
\]
We define $g\in T(X)$ as follows. Set $xg=x$ for every $x\in\{1,2,3,4\}$. Let $x\in\{5,6,7\}$ ($x\in\{5,6\}$ if $n=6$).
If $x\in A_1\cap B_1$, define $xg=3$; if $x\in A_1\cap B_2$, define $xg=1$;
if $x\in A_2\cap B_1$, define $xg=2$; finally, if $x\in A_2\cap B_2$, define $xg=4$.
Then $g$ is an idempotent of rank $4$ and $e-g-f$.
\vskip 1mm
\noindent{\bf Case 3.} $\rank(e)=2$ and $\rank(f)=3$.
\vskip 1mm
We may assume that $e=(A_1,1\ran(A_2,2\ran$ and $f=(B_1,3\ran(B_2,4\ran(B_3,5\ran$.
If $\{3,4,5\}\subseteq A_1$ or $\{3,4,5\}\subseteq A_2$, then Lemma~\ref{ljo2} applies.
Otherwise, we may assume that $3,4\in A_1$ and $5\in A_2$. If $1\in B_1\cup B_2$ or $2\in B_3$,
then Lemma~\ref{ljo2} applies again.
So suppose $1\in B_3$ and $2\in B_1\cup B_2$. We may assume that $2\in B_1$.
Note that if $z\in\{6,7\}$, then $z$ cannot be in $B_2$ since $z\in B_2$ would imply that there is no
$y\in\ima(f)$ such that $yf^{-1}=\{y\}$.
So now
\[
e=(\{\ldots,3,4,1\},1\ran(\{\ldots,5,2\},2\ran\mbox{ and }
f=(\{\ldots,2,3\},3\ran(\{4\},4\ran(\{\ldots,1,5\},5\ran.
\]
We define $g\in T(X)$ as follows. Set $xg=x$ for every $x\in\{1,2,3,5\}$ and $4g=3$.
Let $z\in\{6,7\}$.
If $z\in A_1\cap B_1$, define $zg=3$; if $z\in A_1\cap B_3$, define $zg=1$;
if $z\in A_2\cap B_1$, define $zg=2$; finally, if $z\in A_2\cap B_3$, define $zg=5$.
Then $g$ is an idempotent of rank $4$ and $e-g-f$.
\vskip 1mm
\noindent{\bf Case 4.} $\rank(e)=2$ and $\rank(f)=4$.
\vskip 1mm
We may assume that $e=(A_1,1\ran(A_2,2\ran$ and $f=(B_1,3\ran(B_2,4\ran(B_3,5\ran(B_4,6\ran$.
If $\{3,4,5,6\}\subseteq A_1$ or $\{3,4,5,6\}\subseteq A_2$, then Lemma~\ref{ljo2} applies.
Otherwise, we may assume that $3,4,5\in A_1$ and $6\in A_2$ or $3,4\in A_1$ and $5,6\in A_2$.

Suppose $3,4,5\in A_1$ and $6\in A_2$. If $1\in B_1\cup B_2\cup B_3$ or $2\in B_4$, then Lemma~\ref{ljo2} applies.
So suppose $1\in B_4$, and we may assume that $2\in B_1$.
Now we have
\begin{align}
e&=(\{\ldots,3,4,5,1\},1\ran(\{\ldots,6,2\},2\ran,\notag\\
f&=(\{\ldots,2,3\},3\ran(\{\ldots,4\},4\ran(\{\ldots,5\},5\ran(\{\ldots,1,6\},6\ran.\notag
\end{align}
We define $g\in T(X)$ as follows. Set $xg=x$ for every $x\in\{1,2,3,6\}$, $4g=3$, and $5g=3$.
Define $7g=3$ if $7\in A_1$ and $7\in B_1\cup B_2\cup B_3$; $7g=1$ if $7\in A_1$ and $7\in B_4$;
$7g=2$ if $7\in A_2$ and $7\in B_1\cup B_2\cup B_3$; and $7g=6$ if $7\in A_2$ and $7\in B_4$.
Then $g$ is an idempotent of rank $4$ and $e-g-f$.
The argument in the case when $3,4\in A_1$ and $5,6\in A_2$ is similar.
\vskip 1mm
\noindent{\bf Case 5.} $\rank(e)=3$ and $\rank(f)=3$.
\vskip 1mm
Since both $e$ and $f$ have an element in their range whose preimage is the singleton,
we may assume that $e=(A_1,1\ran(A_2,2\ran(\{3\},3\ran$ and $f=(B_1,4\ran(B_2,5\ran(\{6\},6\ran$.
If $\{1,2\}\subseteq B_i$ or $\{4,5\}\subseteq A_i$ for some $i$, then
Lemma~\ref{ljo2} applies.
Otherwise, we may assume that $4\in A_1$ and $5\in A_2$. If $1\in B_1$ or $2\in B_2$, then
Lemma~\ref{ljo2} applies again. So suppose $1\in B_2$ and $2\in B_1$.
So now
\[
e=(\{\ldots,4,1\},1\ran(\{\ldots,5,2\},2\ran(\{3\},3\ran\mbox{ and }
f=(\{\ldots,2,4\},4\ran(\{\ldots,1,5\},5\ran(\{6\},6\ran.
\]
We define $g\in T(X)$ as follows. Set $xg=x$ for every $x\in\{1,2,4,5\}$, $3g=1$, and $6g=4$.
Define $7g=4$ if $7\in A_1$ and $7\in B_1$; $7g=1$ if $7\in A_1$ and $7\in B_2$;
$7g=2$ if $7\in A_2$ and $7\in B_1$; and $7g=5$ if $7\in A_2$ and $7\in B_2$.
Then $g$ is an idempotent of rank $4$ and $e-g-f$.
\vskip 1mm
\noindent{\bf Case 6.} $\rank(e)=3$ and $\rank(f)=4$.
\vskip 1mm
We may assume that $e=(A_1,1\ran(A_2,2\ran(\{3\},3\ran$ and $f=(B_1,4\ran(B_2,5\ran(B_3,6\ran(\{7\},7\ran$.
If $\{4,5,6\}\subseteq A_1$ or $\{4,5,6\}\subseteq A_2$, then Lemma~\ref{ljo2} applies.
So we may assume that $4,5\in A_1$ and $6\in A_2$. If $1\in B_1\cup B_2$ or $2\in B_3$, then
Lemma~\ref{ljo2} applies again. So we may assume that $1\in B_3$ and $2\in B_1$. So now
\begin{align}
e&=(\{\ldots,4,5,1\},1\ran(\{\ldots,6,2\},2\ran(\{3\},3\ran,\notag\\
f&=(\{\ldots,2,4\},4\ran(\{\ldots,5\},5\ran(\{\ldots,1,6\},6\ran(\{7\},7\ran.\notag
\end{align}
We define $g\in T(X)$ as follows. Set $xg=x$ for every $x\in\{1,2,4,6\}$ and $5g=4$.
Define $7g=4$ if $7\in A_1$; $7g=6$ if $7\in A_2$;
$3g=3$ if $3\in B_1\cup A_2$; and $3g=1$ if $3\in B_3$.
Then $g$ is an idempotent of rank $4$ and $e-g-f$.

\end{proof}

\begin{theorem}\label{tdia2}
Let $n\geq3$ and let $J_r$ be an ideal in $T(X)$ such that $2\leq r<n$. Then:
\begin{itemize}
   \item [\rm(1)] If $n=3$ or $n\in\{5,6,7\}$ and $r=4$, then the diameter of $\cg(J_r)$ is $4$.
   \item [\rm(2)] In all other cases, the diameter of $\cg(J_r)$ is $5$.
\end{itemize}
\end{theorem}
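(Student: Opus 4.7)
I would split the proof into matching upper and lower bounds.

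\emph{Upper bound.} First observe that $Z(J_r)=\emptyset$ when $r<n$: any central element must commute with every constant $c_x\in J_r$, forcing it to fix every point and so coincide with $\id_X\notin J_r$. Hence every element of $J_r$ is a vertex of $\cg(J_r)$, and for any $a\in J_r$ some power $a^p$ is an idempotent in $J_r$ commuting with $a$, giving the edge $a-a^p$ whenever $a$ is not already an idempotent. Given $a,b\in J_r$, take such idempotent powers $e=a^p,f=b^q$. By Theorem~\ref{tdia}, $d_{\cge(J_r)}(e,f)\le 3$ in general and $\le 2$ when $n=3$ or $(n,r)=(5,4)$. Prepending $a-e$ and appending $f-b$ to a minimal $e$-to-$f$ path produces a path in $\cg(J_r)$ of length at most $5$ (respectively $4$), which handles the upper bound in case (2) and in the two listed subcases of case~(1). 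For $n\in\{6,7\}$ with $r=4$, Lemma~\ref{lad8} supplies the sharper upper bound of $4$ directly.

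\emph{Lower bound.} In case~(1), Lemma~\ref{lad6} exhibits elements of $J_4$ at distance at least $4$ when $n\in\{5,6,7\}$, and for $n=3$ a direct finite check of $\cg(J_2)$ (as in the proof of Theorem~\ref{tdia}) confirms the upper bound $4$ is attained. In case~(2), I would produce $a,b\in J_r$ with $d_{\cg(J_r)}(a,b)\ge 5$ using Notation~\ref{ndi5} with $m+p=r+1$ and $p\ge 3$, choosing $s\ne t$ in $\{1,\dots,p-1\}$ so that $x_{m-s}\ne x_{m-t}$ modulo $m$. Lemmas~\ref{lad3} and~\ref{lad4} pin down the unique idempotents $e,f\in J_r$ commuting with $a,b$ respectively. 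They agree on every $x_i$ and every $z_j$ but differ at $w$, where $we=x_{m-s}\ne x_{m-t}=wf$; a short calculation then gives $ef\ne fe$, so $e$ and $f$ are non-adjacent in $\cge(J_r)$. To forbid a common neighbour $g\in J_r$, I would run a Lemma~\ref{lja2}-style argument: the commutations $eg=ge$ and $fg=gf$ force $g$ to fix every $x_i$, every $z_j$, and $w$ as well, pushing $\rank(g)$ above $r$ and contradicting $g\in J_r$. Lemma~\ref{lad1} then lifts the distance-$\ge 3$ bound for $(e,f)$ in $\cge(J_r)$ to the distance-$\ge 5$ bound for $(a,b)$ in $\cg(J_r)$. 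For the residual small-$r$ subcases where the parameters of Notation~\ref{ndi5} are too tight, notably $r=2$ and $r=3$, I would adapt the construction: at $r=2$, take $a,b$ to be rank-$2$ swap transformations whose squares are the $m=2$ idempotents of Lemma~\ref{lja2}; at $r=3$, enrich the ndi5-pattern with an auxiliary element engineered so that $E_a,E_b$ remain singletons whose unique idempotents share no fixed point.

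\emph{Main obstacle.} The crux is the case~(2) lower bound: excluding every path of length $\le 4$ between $a$ and $b$. Whereas the idempotents forced to commute with $a$ or $b$ are tightly pinned down by Lemmas~\ref{lad3} and~\ref{lad4}, a bridging idempotent $g\in\cge(J_r)$ only has to commute with the already-fixed $e$ and $f$, so many candidate $g$'s exist a priori; one must extract enough forced fixed points to breach the rank bound. This is especially delicate in the small-$r$ regime, because the constant $(X,x\ran$ commutes with every transformation fixing $x$, so the construction must additionally guarantee that $e$ and $f$ have disjoint fixed-point sets—otherwise a rank-$1$ connector slips in and collapses the distance to $4$.
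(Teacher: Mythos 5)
Your upper bounds and your treatment of case~(1) follow the paper's route (Lemma~\ref{lad1} plus Theorem~\ref{tdia}, Lemma~\ref{lad8} for $n\in\{6,7\}$, Lemma~\ref{lad6} and an explicit pair for the lower bounds) and are fine. The problem is your case~(2) lower bound, which is the heart of the theorem, and there your construction does not work.

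You take $a$ of the form \eqref{eqn:e1ndi5} and $b$ of the form \eqref{eqn:e2ndi5} built on the \emph{same} elements $x_1,\ldots,x_m,z_1,\ldots,z_p$, differing only in the extra assignment $wb=z_t$. Lemmas~\ref{lad3} and~\ref{lad4} then force the unique commuting idempotents $e$ and $f$ to satisfy $x_ie=x_if=x_i$ and $z_je=z_jf=x_{m-j+1}$, so $\ima(e)=\ima(f)=\{x_1,\ldots,x_m\}$. But then Lemma~\ref{ljo1} immediately gives the length-two path $e-(X,x_1\ran-f$, hence $d(e,f)=2$ and $d(a,b)\le 4$: the rank-one connector you warn about at the end of your proposal slips into your own construction. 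No ``Lemma~\ref{lja2}-style argument'' can rescue this, because the hypotheses of Lemma~\ref{lja2} require the two idempotents to have disjoint images with interlocked kernel classes ($y_i\in A_i$, $x_{i\sigma}\in B_i$ with all $x$'s and $y$'s distinct), and that interlocking is exactly what is missing here. The paper avoids this by choosing $a$ and $b$ with cycles on \emph{disjoint} sets ($x_1,\ldots,x_m$ for $a$ and $y_1,\ldots,y_m$ for $b$) and threading the tail of each through the cycle of the other; Lemma~\ref{lad3} then pins $e_1$ and $e_k$ into precisely the configuration of Lemma~\ref{lja2}, which forces any common neighbour $g$ to fix all $2m$ (or $2m+1$) of these points and hence to have rank exceeding $r$. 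Separate explicit pairs are still needed for $r=4$ (with $n\ge 8$) and $r=2$, handled via Lemmas~\ref{lad0} and~\ref{lad2}; your sketches for the small-$r$ subcases are too vague to assess but would have to be rebuilt on the disjoint-cycle template in any event.
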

\begin{proof}
Let $n=3$. Then the diameter of $\cg(J_2)$ is at most $4$ by Lemma~\ref{lad1} and Theorem~\ref{tdia}.
On the other hand, consider $a=(3\,1\ran(1\,2)$ and $b=(2\,1\ran(1\,3)$ in $J_2$.
Suppose $e$ and $f$ are idempotents in $J_2$ such that $a-e$ and $f-b$. By Lemma~\ref{lad3},
$e=(\{1\},1\ran(\{3,2\},2\ran$ and $f=(\{1\},1\ran(\{2,3\},3\ran$.
Then $e$ and $f$ do not commute, and so $d(e,f)\geq2$. Thus $d(a,b)\geq4$ by Lemma~\ref{lad1},
and so the diameter of $\cg(J_2)$ is at least $4$.

Let $n\in\{5,6,7\}$ and $r=4$.
If $n=5$, then the diameter of $\cg(J_4)$ is at least $4$ (by Lemma~\ref{lad6})
and at most $4$ (by Lemma~\ref{lad1} and Theorem~\ref{tdia}).
If $n\in\{6,7\}$, then the diameter of $\cg(J_4)$ is at least $4$ (by Lemma~\ref{lad6})
and at most $4$ (by Lemma~\ref{lad8}). We have proved (1).

Let $n\geq4$ and suppose that $n\notin\{5,6,7\}$ or $r\ne4$. Then the diameter of $\cg(J_r)$ is at most $5$
by Lemma~\ref{lad1} and Theorem~\ref{tdia}. It remains to find $a,b\in J_r$ such that the distance
between $a$ and $b$ in $\cg(J_r)$ is at least $5$. We consider four possible cases.
\vskip 1mm
\noindent{\bf Case 1.} $r=2m-1$ for some $m\geq2$.
\vskip 1mm
Then $2\leq m<r<2m\leq n$. Let $x_1,\ldots,x_m,y_1,\ldots,y_m$ be pairwise distinct elements of $X$.
Let
\[
a=(*\,y_2\ran(y_1\,y_2\ldots\,y_m\,x_1\ran(x_1\,x_2\ldots\,x_m)\mbox{ and }
b=(*\,x_3\ran(x_2\,x_3\ldots\,x_{m-1}\,x_1\,y_1\ran(y_1\,y_2\ldots\,y_m)
\]
(see Notation~\ref{ndi5}) and note that $a,b\in J_r$ and $ab\ne ba$. Then, by Lemma~\ref{lad1},
there are idempotents $e_1,\ldots,e_k\in J_r$ ($k\geq1$) such that $a-e_1-\cdots-e_k-b$
is a minimal path in $\cg(J_r)$ from $a$ to $b$. By Lemma~\ref{lad3},
\[
e_1=(A_1,x_1\ran(A_2,x_2\ran\ldots(A_m,x_m\ran\mbox{ and }e_k=(B_1,y_1\ran(B_2,y_2\ran\ldots(B_m,y_m\ran,
\]
where $y_i\in A_i$ ($1\leq i\leq m$), $x_{i+1}\in B_i$ ($1\leq i<m)$, and $x_1\in B_m$.
Let $g\in T(X)$ be an idempotent such that $e_1-g-e_k$.
By Lemma~\ref{lja2}, $x_jg=x_j$ and $y_jg=y_j$ for every $j\in\{1,\ldots,m\}$. Hence
$\rank(g)\geq 2m>r$, and so $g\notin J_r$. It follows that the distance between $e_1$ and $e_k$
is at least $3$, and so the distance between $a$ and $b$ is at least $5$.
\vskip 1mm
\noindent{\bf Case 2.} $r=2m$ for some $m\geq3$.
\vskip 1mm
Then $3\leq m<r=2m<n$. Let $x_1,\ldots,x_m,y_1,\ldots,y_m,z$ be pairwise distinct elements of $X$.
Let
\begin{align}
a&=(*\,y_2\ran(z\,y_1\,y_2\ldots\,y_m\,x_1\ran(x_1\,x_2\ldots\,x_m),\notag\\
b&=(*\,x_1\ran(z\,x_3\ran(x_2\,x_3\ldots\,x_m\,x_1\,y_1\ran(y_1\,y_2\ldots\,y_m)\notag
\end{align}
(see Notation~\ref{ndi5}) and note that $a,b\in J_r$ and $ab\ne ba$. Then, by Lemma~\ref{lad1},
there are idempotents $e_1,\ldots,e_k\in J_r$ ($k\geq1$) such that $a-e_1-\cdots-e_k-b$
is a minimal path in $\cg(J_r)$ from $a$ to $b$. By Lemma~\ref{lad3},
\[
e_1=(A_1,x_1\ran(A_2,x_2\ran\ldots(A_m,x_m\ran\mbox{ and }e_k=(B_1,y_1\ran(B_2,y_2\ran\ldots(B_m,y_m\ran,
\]
where $y_i\in A_i$ ($1\leq i\leq m$), $x_{i+1}\in B_i$ ($1\leq i<m)$, $x_1\in B_m$,
$A_m=\{x_m,y_m,z\}$, and $B_1=\{y_1,x_2,z\}$. Let $g\in T(X)$ be an idempotent such that $e_1-g-e_k$.
By Lemma~\ref{lja2}, $x_jg=x_j$ and $y_jg=y_j$ for every $j\in\{1,\ldots,m\}$, and $zg=z$. Hence
$\rank(g)\geq 2m+1>r$, and so $g\notin J_r$. It follows that the distance between $e_1$ and $e_k$
is at least $3$, and so the distance between $a$ and $b$ is at least $5$.
\vskip 1mm
\noindent{\bf Case 3.} $r=4$.
\vskip 1mm
Since we are working under the assumption that $n\notin\{5,6,7\}$ or $r\ne4$, we have $n\notin\{5,6,7\}$.
Thus $n\geq8$ (since $r\leq n-1$). Let
\[
a=\begin{pmatrix}
1&2&3&4&5&6&7&8&9&\!\!\!\!\ldots\, n\\2&3&4&1&2&3&4&1&1&\!\!\!\!\ldots\, 1
\end{pmatrix}\mbox{ and }
b=\begin{pmatrix}
1&2&3&4&5&6&7&8&9&\!\!\!\!\ldots\, n\\5&6&7&8&6&7&8&5&1&\!\!\!\!\ldots\, 1
\end{pmatrix}.
\]
Note that $a,b\in J_4$, $ab\ne ba$, $(1\,2\,3\,4)$ is a unique cycle in $a$,
and $(5\,6\,7\,8)$ is a unique cycle in $b$. By Lemma~\ref{lad1},
there are idempotents $e_1,\ldots,e_k\in J_4$ ($k\geq1$) such that $a-e_1-\cdots-e_k-b$
is a minimal path in $\cg(J_4)$ from $a$ to $b$. By Lemma~\ref{lad0},
$ie_1=i$ and $(4+i)e_k=4+i$ for every $i\in\{1,2,3,4\}$.
By Lemma~\ref{lad2}, $5e_1=1$ or $5e_1=5$. But the latter is impossible since
with $5e_1=5$ we would have $\rank(e_1)\geq5$. Similarly, we obtain
$6e_1=2$, $7e_1=3$, $8e_1=4$, $2e_k=5$, $3e_k=6$, $4e_k=7$, and $1e_k=8$.
Let $g\in T(X)$ be an idempotent such that $e_1-g-e_k$.
By Lemma~\ref{lja2}, $jg=j$ for every $j\in\{1,\ldots,8\}$. Hence
$\rank(g)\geq 8>r$, and so $g\notin J_4$. It follows that the distance between $e_1$ and $e_k$
is at least $3$, and so the distance between $a$ and $b$ is at least~$5$.
\vskip 1mm
\noindent{\bf Case 4.} $r=2$.
\vskip 1mm
In this case we let
\[
a=\begin{pmatrix}
1&2&3&4&5&\!\!\!\!\ldots\, n\\2&1&2&1&1&\!\!\!\!\ldots\, 1
\end{pmatrix}\mbox{ and }
b=\begin{pmatrix}
1&2&3&4&5&\!\!\!\!\ldots\, n\\3&4&4&3&3&\!\!\!\!\ldots\, 3
\end{pmatrix}.
\]
Note that $a,b\in J_2$, $ab\ne ba$, $(1\,2)$ is a unique cycle in $a$,
and $(3\,4)$ is a unique cycle in $b$. By Lemma~\ref{lad1},
there are idempotents $e_1,\ldots,e_k\in J_2$ ($k\geq1$) such that $a-e_1-\cdots-e_k-b$
is a minimal path in $\cg(J_2)$ from $a$ to $b$. By Lemma~\ref{lad0},
$1e_1=1$, $2e_1=2$, $3e_k=3$, and $4e_k=4$.
By Lemma~\ref{lad2}, $3e_1=1$ or $3e_1=3$. But the latter is impossible since
with $3e_1=3$ we would have $\rank(e_1)\geq3$. Again By Lemma~\ref{lad2},
$4e_1=2$ or $4e_1=y$ for some $y\in\{4,5,\ldots,n\}$. But the latter is impossible
since we would have $ye_1=y$ and again $\rank(e_1)$ would be at least $3$.
Similarly, we obtain $2e_k=3$, and $1e_k=4$.
Let $g\in T(X)$ be an idempotent such that $e_1-g-e_k$.
By Lemma~\ref{lja2}, $jg=j$ for every $j\in\{1,\ldots,4\}$. Hence
$\rank(g)\geq 4>r$, and so $g\notin J_2$. It follows that the distance between $e_1$ and $e_k$
is at least $3$, and so the distance between $a$ and $b$ is at least~$5$.

Thus the diameter of $\cg(J_r)$ is at least $5$, which concludes the proof of (2).
\end{proof}

\subsection{The Commuting Graph of $T(X)$}\label{ssctx}
Let $X$ be a finite set with $|X|=n$. It has been proved in \cite[Theorem~3.1]{IrJa08} that if $n$ and $n-1$ are not prime,
then the diameter of the commuting graph of $\sym(X)$ is at most $5$, and that the bound is sharp since
the diameter of $\cg(\sym(X))$ is $5$ when $n=9$.
In this subsection, we determine the exact value of the diameter of the commuting graph of $T(X)$
for every $n\geq2$.

Throughout this subsection, we assume that $X$ is a finite set with $n\geq2$ elements.

\begin{lemma}\label{ltx}
Let $n\geq4$ be composite.
Let $a,f\in T(X)$ such that $a,f\ne\id_X$, $a\in\sym(X)$, and $f$ is an idempotent. Then
$d(a,f)\leq4$.
\end{lemma}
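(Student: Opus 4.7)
The plan is to split on the cycle structure of $a$ and, in each case, exhibit an explicit short path in $\cg(T(X))$ from $a$ to $f$ using constants $(X,z\ran$ and low-rank idempotents as intermediaries. Throughout I will use two facts which are both immediate from Lemma~\ref{lcen}: a constant transformation $(X,z\ran$ commutes with an element $t\in T(X)$ precisely when $z$ is a fixed point of $t$; and any rank-$2$ idempotent with image $\{u,v\}$ commutes with both constants $(X,u\ran$ and $(X,v\ran$.

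The easy case is when $a$ has a fixed point $x$. Pick any $y\in\ima(f)$ (which exists since $f\ne\id_X$) and any rank-$2$ idempotent $h$ with image $\{x,y\}$, for example $h=(X\setminus\{y\},x\ran(\{y\},y\ran$. Then $(X,x\ran$ is a common neighbour of $a$ and $h$, and $(X,y\ran$ a common neighbour of $h$ and $f$, yielding the path $a-(X,x\ran-h-(X,y\ran-f$ of length $4$ (trivially shorter if $x=y$).

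Suppose next that $a$ is a derangement that is not an $n$-cycle, so $a$ has at least two disjoint cycles. Choose a cycle-support $C\subsetneq X$ of $a$ with $\ima(f)\not\subseteq C$; this is always possible because if $\ima(f)$ happened to lie in the support of some cycle of $a$, one can instead take $C$ to be the support of a different cycle. Let $b$ be the permutation agreeing with $a$ on $C$ and equal to the identity on $X\setminus C$. A direct check (or Lemma~\ref{lad2}) shows $ab=ba$, and $b\ne\id_X$ since $a|_C$ is a non-trivial cycle. Any $y\in\ima(f)\cap(X\setminus C)$ is fixed by $b$, so $(X,y\ran$ is a common neighbour of $b$ and $f$, giving the path $a-b-(X,y\ran-f$ of length $3$.

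The main obstacle is the case where $a$ is an $n$-cycle, and this is precisely where the hypothesis that $n$ is composite is used. By Lemma~\ref{lad2} the centralizer of $a$ in $T(X)$ is exactly $\langle a\rangle$ (a $b$ commuting with $a$ is determined by $1b$ and must be a power of $a$), a cyclic group of fixed-point-free permutations, so $a$ has no idempotent neighbour and no neighbour with a fixed point. Write $n=pq$ with $p,q\geq 2$; then $a^p$ is a permutation distinct from both $\id_X$ and $a$, whose orbits are $p$ disjoint $q$-element sets. Given $y\in\ima(f)$, let $O=\{y_0,y_1,\dots,y_{q-1}\}$ with $y_l=ya^{lp}$ be the $\langle a^p\rangle$-orbit of $y$; pick a base point in each of the other $p-1$ orbits and list its elements analogously under iterated powers of $a^p$, then form blocks $A_0,\dots,A_{q-1}$ of size $p$ by grouping, across all orbits, the elements at the same index. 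By construction $A_la^p=A_{l+1}$ with indices taken modulo $q$, so Lemma~\ref{lcen} implies that the rank-$q$ idempotent $e=(A_0,y_0\ran(A_1,y_1\ran\cdots(A_{q-1},y_{q-1}\ran$ commutes with $a^p$; since $y\in\ima(e)$, this produces the path $a-a^p-e-(X,y\ran-f$ of length $4$. In every case distinctness of consecutive vertices is forced by rank comparisons (for instance $a$ and $a^p$ have rank $n$, the constants have rank $1$, and $e$ has rank $q<n$), so $d(a,f)\leq 4$ as required.
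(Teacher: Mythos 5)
Your proof is correct, and it reaches the bound by the same pivotal device as the paper --- when $a$ is an $n$-cycle, compositeness lets you replace $a$ by a proper power $a^p$, which commutes with $a$ and splits into several cycles --- but your case decomposition and bridging idempotents are genuinely different. The paper fixes $x\in\ima(f)$, takes the cycle of $a$ through $x$ (of length $m$), and splits on whether some cycle length fails to divide $m$ (then $a-a^m-(X,x\ran-f$ works) or not (then it builds an idempotent collapsing the $x$-cycle onto another cycle, invoking Lemma~\ref{ljo2} in the subcase $\ima(f)\subseteq\{x_1,\ldots,x_m\}$). Your split into fixed point / multi-cycle derangement / $n$-cycle avoids the divisibility bookkeeping entirely: in the derangement case the restriction of $a$ to a single cycle support is a nonidentity permutation commuting with $a$ that fixes a point of $\ima(f)$, and in the $n$-cycle case your cross-section idempotent $e$ (kernel classes are transversals of the $\lan a^p\ran$-orbits, image the orbit of $y\in\ima(f)$) has $y\in\ima(e)$ by construction, so Lemma~\ref{ljo2} is never needed. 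The only thing the paper's route buys in exchange is the slightly stronger conclusion $d(a,f)\leq3$ whenever $a$ is not an $n$-cycle, which is not part of the statement; for the claimed bound of $4$ your argument is complete, with distinctness of consecutive vertices handled correctly by the rank comparisons you note.
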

\begin{proof}
Fix $x\in\ima(f)$ and a cycle $(x_1\ldots x_m)$ of $a$ such that $x\in\{x_1,\ldots,x_m\}$.
Consider three cases.
\vskip 1mm
\noindent{\bf Case 1.} $a$ has a cycle $(y_1\ldots y_k)$ such that $k$ does not divide $m$.
\vskip 1mm
Then $a^m$ is different from $\id_X$ and it fixes $x$. Thus $a-a^m-(X,x\ran-f$, and so $d(a,f)\leq3$.
\vskip 1mm
\noindent{\bf Case 2.} $a$ has at least two cycles and for every cycle $(y_1\ldots y_k)$ of $a$, $k$ divides $m$.
\vskip 1mm
Suppose there is $z\in\ima(f)$ such that $z\in\{y_1,\ldots,y_k\}$ for some cycle $(y_1\ldots y_k)$
of $a$ different from $(x_1\ldots x_m)$. Since $k$ divides $m$, there is a positive integer $t$ such that $m=tk$.
Define $e\in T(X)$ by:
\begin{equation}\label{eltx1}
x_1e=y_1,\ldots,x_ke=y_k,\,x_{k+1}e=y_1,\ldots,x_{2k}e=y_k,\ldots,x_{(t-1)k+1}e=y_1,\ldots,x_{tk}e=y_k,
\end{equation}
and $ye=y$ for all other $y\in X$.
Then $e$ is an idempotent such that $ae=ea$ and $z\in\ima(e)$. Thus, by Lemma~\ref{ljo1},
$a-e-(X,z\ran-f$, and so $d(a,f)\leq3$.

Suppose that $\ima(f)\subseteq\{x_1,\ldots,x_m\}$. Consider any cycle $(y_1\ldots y_k)$ of $a$
different from $(x_1\ldots x_m)$. Since $\ima(f)\subseteq\{x_1,\ldots,x_m\}$, $y_1f=x_i$ for some $i$.
We may assume that $y_1f=x_1$.
Define an idempotent $e$ exactly as in (\ref{eltx1}).
Then $\ima(e)\cap\ima(f)=\emptyset$,
$(y_1,x_1)\in\ima(e)\times\ima(f)$, and $(y_1,x_1)\in\Ker(e)\cap\Ker(f)$.
Thus,  by Lemma~\ref{ljo2}, there is an idempotent $g\in T(X)-\{\id_X\}$ such that $e-g-f$.
Hence $a-e-g-f$, and so $d(a,f)\leq3$.
\vskip 1mm
\noindent{\bf Case 3.} $a$ is an $n$-cycle.
\vskip 1mm
Since $n$ is composite, there is a divisor $k$ of $n$ such that $1<k<n$. Then $a^k\ne\id_X$ is a permutation
with $k\geq2$ cycles, each of length $m=n/k$. By Case~2, $d(a^k,f)\leq3$, and so $d(a,f)\leq4$.
\end{proof}

\begin{lemma}\label{ltx1}
Let $n\geq4$ be composite.
Let $a,b\in T(X)$ such that $a,b\ne\id_X$ and $a\in\sym(X)$. Then $d(a,b)\leq5$.
\end{lemma}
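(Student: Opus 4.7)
The plan is to split on whether $b\in\sym(X)$. If $b\notin\sym(X)$, then $\rank(b)<n$ and some power $b^p$ is a non-identity idempotent (its rank is at most $\rank(b)<n$, so it differs from $\id_X$); since $b$ commutes with $b^p$, Lemma~\ref{ltx} yields $d(a,b^p)\leq 4$ and hence $d(a,b)\leq 5$. (If $b$ is itself an idempotent, Lemma~\ref{ltx} applies directly.) So we may assume $b\in\sym(X)\setminus\{\id_X\}$, meaning both $a$ and $b$ are non-identity permutations.

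In this principal case, the strategy is as follows. If $a$ commutes with some non-identity idempotent $e\in T(X)$, then $d(a,e)=1$ and Lemma~\ref{ltx}, applied to the non-identity permutation $b$ and the non-identity idempotent $e$, gives $d(e,b)\leq 4$, yielding $d(a,b)\leq 5$. By Lemma~\ref{lcen}, such an $e$ exists whenever $a$ admits a proper $\langle a\rangle$-equivariant retraction of $X$ onto a union of its cycles; sufficient conditions include $a$ having a fixed point $x$ (take $e=(X,x\rangle$), having two cycles of equal length (take a fold idempotent), or more generally having some cycle length that divides another cycle length. A symmetric argument handles the case where $b$ admits a commuting non-identity idempotent.

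The main obstacle is the residual sub-case where neither $a$ nor $b$ admits such a commuting non-identity idempotent. By the above criterion this forces each of $a,b$ to be either an $n$-cycle or a permutation whose orbit lengths form an antichain under divisibility, all $\geq 2$. In these situations one passes to a non-trivial power: for an $n$-cycle $a$, the power $a^d$ with $d\mid n$, $1<d<n$, has $d$ cycles of length $n/d\geq 2$ and thus commutes with a fold idempotent; for an antichain orbit structure, $a^{k_1}$ (with $k_1$ the smallest orbit length) fixes the elements of the length-$k_1$ orbit and is non-identity, so commutes with a constant idempotent at a fixed point. One then aims to build a length-$5$ path $a-a^p-e_a-e_b-b^q-b$ by choosing non-identity idempotents $e_a,e_b$ commuting with $a^p,b^q$ respectively \emph{and also commuting with each other}. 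Verifying that such a commuting pair $(e_a,e_b)$ can always be produced---via Lemma~\ref{lcen} and a careful alignment of images and kernel partitions, for instance taking constants at a common fixed point of $a^p$ and $b^q$ when available, or fold idempotents whose images share an element so that their composition collapses onto a common constant---is the technical crux of the argument.
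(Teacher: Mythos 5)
Your first case ($b\notin\sym(X)$: pass to an idempotent power of $b$ and invoke Lemma~\ref{ltx}) is exactly the paper's argument and is fine. Your reduction in the main case is also sound as far as it goes: if either of the two permutations commutes with a non-identity idempotent $e$, then Lemma~\ref{ltx} applied to the other permutation and $e$ gives a path of length at most $1+4=5$, and your characterization of when this fails (no fixed point and no divisibility relation between distinct cycle lengths, e.g.\ $n$-cycles) is correct. This is a genuinely different tack from the paper, which instead quotes Theorem~3.1 of Iranmanesh--Jafarzadeh to handle two non-identity permutations at once (the cited theorem gives diameter at most $5$ for $\cg(\sym(X))$ when $n$ and $n-1$ are not prime, and the paper patches the case $n-1$ prime by noting that the only obstruction is a $b$ with $b^{n-1}=\id_X$, which fixes a point and hence commutes with a constant).

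The problem is that your residual case --- both $a$ and $b$ fixed-point-free with antichain cycle structure, e.g.\ two $n$-cycles --- is precisely where the entire difficulty of the lemma lives, and you do not prove it: you explicitly defer "verifying that such a commuting pair $(e_a,e_b)$ can always be produced" as "the technical crux." That is not a routine verification. The specific hints you give do not close it: two idempotents whose images share an element need not commute (Lemma~\ref{ljo1} only yields $e_a-(X,z\ran-e_b$, which lengthens your path $a-a^p-e_a-\cdots-e_b-b^q-b$ to $6$); and a common fixed point of $a^p$ and $b^q$ need not exist. For two $n$-cycles the candidate idempotents are "folds" among the cycles of $a^d$ and $b^{d'}$, and arranging for one fold of $a^d$ to commute with one fold of $b^{d'}$ for arbitrary pairs of $n$-cycles is essentially equivalent in difficulty to the symmetric-group diameter bound the paper imports. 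So the proposal is an incomplete proof with a genuine gap at its central step; to finish along your lines you would need either to carry out that combinatorial construction in full or, as the paper does, to fall back on \cite[Theorem~3.1]{IrJa08}.
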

\begin{proof}
Suppose $b\notin\sym(X)$. Then $b^k$ is an idempotent different from $\id_X$ for some $k\geq1$.
By Lemma~\ref{ltx}, $d(a,b^k)\leq4$, and so $d(a,b)\leq5$.

Suppose $b\in\sym(X)$. Suppose $n-1$ is not prime. Then, by \cite[Theorem~3.1]{IrJa08},
there is a path from $a$ to $b$
in $\cg(\sym(X))$ of length at most $5$. Such a path is also a path in $\cg(T(X))$, and so $d(a,b)\leq5$.
Suppose $p=n-1$ is prime. Then the proof of \cite[Theorem~3.1]{IrJa08} still works for $a$ and $b$ unless $a^p=\id_X$ or $b^p=\id_X$.
(See also \cite[Lemma~3.3]{IrJa08} and its proof.) Thus, if $a^p\ne\id_X$ and $b^p\ne\id_X$, then
there is a path from $a$ to $b$
in $\cg(\sym(X))$ of length at most $5$, and so $d(a,b)\leq5$. Suppose $a^p=\id_X$ or $b^p=\id_X$.
We may assume that $b^p=\id_X$. Then $b$ is a cycle of length $p$, that is, $b=(x_1\ldots x_p)(x)$.
Thus $b$ commutes with the constant idempotent $f=(X,x\ran$. By Lemma~\ref{ltx}, $d(a,f)\leq4$,
and so $d(a,b)\leq5$.
\end{proof}

\begin{lemma}\label{ltx2} Let $X=\{x_1,\ldots,x_m,y_1,\ldots,y_k\}$, $a\in\sym(X)$, and
$b=(y_1\ldots y_k\,x_1\ran(x_1\ldots x_m)$. If $ab=ba$ then $a=\id_X$.
\end{lemma}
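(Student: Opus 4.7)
The plan is to show that $a$ must fix every element of $X$. The key observation is that, writing out the action of $b$ from Notation~\ref{ndi5}, one has $Xb=X\setminus\{y_1\}$, so $y_1$ is the unique element of $X$ outside the image of $b$. Since $a$ is a bijection of $X$, applying $Xa=X$ to the identity $Xab=Xba$ yields $Xb=(Xb)a$, so $a$ stabilises $Xb$ setwise and hence also fixes its singleton complement. Thus $y_1a=y_1$.

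With this anchor in hand, I would propagate along the directed graph of $b$ using Lemma~\ref{lad2}. Explicitly, $b$ realises the chain
\[
y_1\arb y_2\arb\cdots\arb y_k\arb x_1\arb x_2\arb\cdots\arb x_m\arb x_1,
\]
which traverses every element of $X$. At each link $u\arb v$ of this chain, Lemma~\ref{lad2} gives $(ua)b=va$; so whenever $u$ is already known to satisfy $ua=u$, we obtain $v=ub=(ua)b=va$, i.e., $va=v$. Iterating from $y_1a=y_1$ therefore forces $y_ia=y_i$ for every $i\in\{1,\ldots,k\}$, then $x_1a=x_1$, and finally $x_ia=x_i$ for every $i\in\{2,\ldots,m\}$. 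Combining these, $a=\id_X$.

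The main obstacle is essentially notational rather than substantive: one needs to verify that the prescribed action of $b$ really does produce the advertised chain and that $y_1$ is its unique source. Both facts are immediate from Notation~\ref{ndi5}, and since the chain visits each element of $X$ exactly once before closing up at $x_1$, there is no branching and the forward propagation is unambiguous. No case analysis on $a$ itself is required; the bijectivity of $a$ enters only through the single step $y_1a=y_1$.
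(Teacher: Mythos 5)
Your argument is correct, and it takes a genuinely different (and arguably slicker) route than the paper's. The paper starts at the other end: it first notes that the images $x_1a,\ldots,x_ma$ must again trace out the unique cycle of $b$, so $x_ia=x_{q+i-1}$ for some shift $q$; it then rules out $q\ge 2$ by observing that $x_qb^{-1}$ would be the singleton $\{x_{q-1}\}$, forcing $y_ka=x_{q-1}=x_ma$ and contradicting injectivity of $a$; having pinned $x_ia=x_i$, it propagates \emph{backwards} along the tail, from $y_k$ (using $x_1b^{-1}=\{y_k,x_m\}$ and $x_ma=x_m$) down to $y_1$. You instead anchor at the source of the chain: since $a$ is a permutation, $Xb=(Xa)b=X(ab)=X(ba)=(Xb)a$, so $a$ stabilises $\ima(b)=X\setminus\{y_1\}$ and hence fixes $y_1$; then a single forward induction along $y_1\arb\cdots\arb y_k\arb x_1\arb\cdots\arb x_m$, using $ua=u\imp (ub)a=(ua)b=ub$, finishes the proof. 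Your version avoids the case analysis on the shift $q$ and the repeated appeals to preimage sets, invoking bijectivity of $a$ only once at the anchor step; the paper's version has the minor advantage of not needing the global observation about $\ima(b)$, working instead purely with local preimage computations. Both are complete proofs.
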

\begin{proof}
Suppose $ab=ba$. By Lemma~\ref{lad2},
\begin{equation}\label{e1ltx2}
x_1a\arb x_2a\arb\cdots\arb x_ma\arb x_1a\quad\mbox{and}\quad y_1a\arb y_2a\arb\cdots\arb y_ka\arb x_1a.
\end{equation}
Since $(x_1\,x_2\ldots\,x_m)$ is a unique cycle in $b$, (\ref{e1ltx2}) implies that
\begin{equation}\label{e2ltx2}
x_1a=x_q,\, x_2a=x_{q+1},\ldots,\, x_ma=x_{q+m-1},
\end{equation}
where $q\in\{1,\ldots,m\}$ ($x_{q+i}=x_{q+i-m}$ if $q+i>m$).
Thus $x_1a=x_j$ for some $j$. Since $y_k\arb x_1$ and $x_m\arb x_1$,
we have $y_ka\arb x_1a=x_j$ and $x_ma\arb x_1a=x_j$.  Suppose $j\geq2$. Then $x_jb^{-1}=\{x_{j-1}\}$,
and so $y_ka=x_{j-1}=x_ma$. But this implies $y_k=x_m$ (since $a$ is injective), which is a contradiction.
Hence $j=1$, and so $x_1a=x_1$. But then $x_ia=x_i$ for all $i$ by (\ref{e2ltx2}).

Since $y_ka\arb x_1a=x_1$, we have $y_ka=y_k$ since $x_1b^{-1}=\{y_k,x_m\}$. Let $i\in\{1,\ldots,k-1\}$
and suppose $y_{i+1}a=y_{i+1}$. Then $y_ia=y_i$
since $y_ia\arb y_{i+1}a=y_{i+1}$ and $y_{i+1}b^{-1}=\{y_{i+1}\}$. It follows that $y_ia=y_i$ for all $i\in\{1,\ldots,k\}$.
\end{proof}

\begin{lemma}\label{ltx3}
Let $m$ be a positive integer such that $2m\leq n$, $\sigma$ be an $m$-cycle on $\{1,\ldots,m\}$, $a\in\sym(X)$, and
\[
e=(A_1,x_1\ran(A_2,x_2\ran\ldots(A_m,x_m\ran\mbox{ and }f=(B_1,y_1\ran(B_2,y_2\ran\ldots(B_m,y_m\ran
\]
be idempotents in $T(X)$ such that $x_1,\ldots,x_m,y_1,\ldots,y_m$ are pairwise distinct,
$y_i\in A_i$, and $x_{i\sigma}\in B_i$ ($1\leq i\leq m)$. Then:
\begin{itemize}
  \item [\rm(1)] Suppose $X=\{x_1,\ldots,x_m,y_1,\ldots,y_m,z\}$ and $z\in A_i\cap B_j$ such that $A_i\cap B_j=\{z\}$.
If $e-a-f$, then $a=\id_X$.
  \item [\rm(2)] Suppose $X=\{x_1,\ldots,x_m,y_1,\ldots,y_m,z,w\}$, $z\in A_i\cap B_j$ such that $A_i\cap B_j=\{z\}$,
and $w\in A_s\cap B_t$ such that $A_s\cap B_t=\{w\}$, where $s\ne i$ and $t\ne j$.
If $e-a-f$, then $a=\id_X$.
\end{itemize}
\end{lemma}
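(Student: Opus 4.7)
My plan is to argue analogously to Lemma~\ref{lja2}, but to exploit that $a$ is a bijection of $X$ (rather than merely an idempotent) in order to track cardinalities and show $a$ acts as a power of $\sigma$ on the index set.

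\emph{Step 1: $a$ induces a permutation $\tau$ on $\{x_1,\ldots,x_m\}$ and a permutation $\pi$ on $\{y_1,\ldots,y_m\}$.} Applying Lemma~\ref{lcen} to $ae=ea$, every $x_ia$ lies in $\ima(e)=\{x_1,\ldots,x_m\}$ with $A_ia\subseteq A_{i\tau}$, where $x_ia=x_{i\tau}$; since $a$ is injective, $\tau$ is a bijection of $\{1,\ldots,m\}$. Similarly $af=fa$ yields $\pi$ with $y_ia=y_{i\pi}$ and $B_ia\subseteq B_{i\pi}$.

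\emph{Step 2: $\pi=\tau$ and $\tau\sigma=\sigma\tau$.} Since $y_i\in A_i$ and $A_ia\subseteq A_{i\tau}$, the element $y_ia=y_{i\pi}$ lies in $A_{i\tau}$. But $y_{i\pi}\in A_{i\pi}$ and the $A_k$'s are pairwise disjoint, so $i\pi=i\tau$, i.e., $\pi=\tau$. Tracking the analogous containment $x_{i\sigma}\in B_i$ under the $f$-commutation gives $x_{i\sigma\tau}=x_{i\sigma}a\in B_{i\tau}$; on the other hand $x_k\in B_{k\sigma^{-1}}$, so $x_{i\sigma\tau}\in B_{i\sigma\tau\sigma^{-1}}$, and disjointness of the $B_k$'s forces $\sigma\tau\sigma^{-1}=\tau$. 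Since $\sigma$ is an $m$-cycle, its centralizer in $\sym(\{1,\ldots,m\})$ is $\langle\sigma\rangle$, so $\tau=\sigma^k$ for some $k$.

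\emph{Step 3: Conclusion for (1).} Since $a$ permutes $\{x_1,\ldots,x_m,y_1,\ldots,y_m\}$, it must also fix the remaining element $z$. From $z\in A_i$ and $za=z\in A_{i\tau}$, disjointness of the $A_k$'s gives $i\tau=i$; as $\langle\sigma\rangle$ acts freely on $\{1,\ldots,m\}$, $\tau=\id$, so $a$ fixes every $x_j$ and $y_j$ as well, and thus $a=\id_X$.

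\emph{Step 4: Conclusion for (2).} Now $a$ permutes the two-element set $\{z,w\}$. If $a$ fixes both, the argument of Step~3 immediately gives $a=\id_X$. Otherwise $za=w$ and $wa=z$; tracking block memberships yields $i\tau=s$ and $j\tau=t$ (from the $e$- and $f$-sides), and applying $a$ again gives $s\tau=i$, so $\tau^2=\id$ together with $\tau\neq\id$. Combined with $\tau\in\langle\sigma\rangle$, this forces $m$ even and $\tau=\sigma^{m/2}$, hence $s=i\sigma^{m/2}$ and $t=j\sigma^{m/2}$. The main obstacle is ruling out this sub-case: I expect to derive a contradiction by combining the singleton-intersection hypotheses $A_i\cap B_j=\{z\}$ and $A_s\cap B_t=\{w\}$ with the forced relations $s=i\sigma^{m/2}$, $t=j\sigma^{m/2}$, exhibiting an additional element of one of these intersections and so leaving only the fixed-point sub-case, which yields $a=\id_X$.
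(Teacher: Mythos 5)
Your Steps 1--3 are correct, and they already give a complete proof of part (1) by a route genuinely different from the paper's: the paper pins down $x_ia=x_i$ locally, by noting that $A_ia$ has three elements and so cannot fit inside a two-element class $A_p$ with $p\ne i$, then gets $za\in A_i\cap B_j=\{z\}$ and propagates fixed points around the $\sigma$-cycle; you instead work globally, extracting index permutations $\tau=\pi$ that commute with $\sigma$, so that $\tau\in\lan\sigma\ran$ and a single fixed index forces $\tau=\id$. (Your argument for (1) does not even use the hypothesis $A_i\cap B_j=\{z\}$.)

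The gap is Step 4, and it cannot be filled the way you hope: the ``swap'' sub-case is consistent with \emph{every} stated hypothesis of part (2), so there is no contradiction to be found there. Take $m=4$, $\sigma=(1\,2\,3\,4)$, $X=\{x_1,\dots,x_4,y_1,\dots,y_4,z,w\}$ (so $n=10\ge 2m$), and
\begin{align*}
&A_1=\{x_1,y_1,z\},\quad A_2=\{x_2,y_2\},\quad A_3=\{x_3,y_3,w\},\quad A_4=\{x_4,y_4\},\\
&B_1=\{y_1,x_2\},\quad B_2=\{y_2,x_3,z\},\quad B_3=\{y_3,x_4\},\quad B_4=\{y_4,x_1,w\}.
\end{align*}
All hypotheses hold with $i=1$, $j=2$, $s=3$, $t=4$ (in particular $A_1\cap B_2=\{z\}$ and $A_3\cap B_4=\{w\}$), yet the involution $a=(x_1\,x_3)(x_2\,x_4)(y_1\,y_3)(y_2\,y_4)(z\,w)$ satisfies $x_ka=x_{k\sigma^2}$, $A_ka=A_{k\sigma^2}$, $y_ka=y_{k\sigma^2}$, $B_ka=B_{k\sigma^2}$ for every $k$, hence $e-a-f$ by Lemma~\ref{lcen} while $a\ne\id_X$. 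So part (2) as literally stated is false, and your $\tau=\sigma^{m/2}$ sub-case is exactly where it fails; ruling it out requires extra information about $\sigma$, $i$, $j$, $s$, $t$ imported from the intended application (in Case~2 of Theorem~\ref{tdia3} one has $i=m$, $s=1$, and for the $\sigma$ used there $i\sigma^{m/2}=s$ only when $m=4$, so even that application is delicate at $n=10$). For what it is worth, the paper's own proof of (2) is only declared ``similar'' to that of (1), and its cardinality step in case (2) likewise yields only $x_ia\in\{x_i,x_s\}$ --- it silently passes over the very fork you isolated.
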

\begin{proof}
To prove (1), suppose $e-a-f$ and note that $A_i=\{x_i,y_i,z\}$ and $B_j=\{y_j,x_{j\sigma},z\}$. By Lemma~\ref{lcen}, there is $p\in\{1,\ldots,m\}$
such that $x_ia=x_p$ and $A_ia\subseteq A_p$. Suppose $p\ne i$. Then $A_p=\{x_p,y_p\}$, and so
$A_ia$ cannot be a subset of $A_p$ since $a$ is injective. It follows that $p=i$, that is,
$x_ia=x_i$ and $A_ia\subseteq A_i$. Similarly, $y_ja=y_j$ and $B_ja\subseteq B_j$. Thus $za\in A_i\cap B_j=\{z\}$,
and so $za=z$. Hence, since $a$ is injective, $y_ia=y_i$.

We have proved that $x_ia=x_i$, $y_ia=y_i$, and $za=z$. We have $B_i=\{y_i,x_{i\sigma}\}$ or $B_i=\{y_i,x_{i\sigma},z\}$
Since $y_ia=y_i$, we have $B_ia\subseteq B_i$ by Lemma~\ref{lcen}.
Since $za=z$ and $a$ is injective, it follows that $x_{i\sigma}a=x_{i\sigma}$. By the foregoing argument
applied to $A_{i\sigma}=\{x_{i\sigma},y_{i\sigma}\}$, we obtain $y_{i\sigma}a=y_{i\sigma}$.
Continuing this way, we obtain $x_{i\sigma^k}a=x_{i\sigma^k}$ and $y_{i\sigma^k}a=y_{i\sigma^k}$
for every $k\in\{1,\ldots,m-1\}$. Since $\sigma$ is an $m$-cycle, it follows that $x_ja=x_j$ and $y_jg=y_j$
for every $j\in\{1,\ldots,m\}$. Hence $a=\id_X$. We have proved (1). The proof of (2) is similar.
\end{proof}

\begin{theorem}\label{tdia3}
Let $X$ be a finite set with $n\geq2$ elements. Then:
\begin{itemize}
   \item [\rm(1)] If $n$ is prime, then $\cg(T(X))$ is not connected.
   \item [\rm(2)] If $n=4$, then the diameter of $\cg(T(X))$ is $4$.
   \item [\rm(3)] If $n\geq6$ is composite, then the diameter of $\cg(T(X))$ is $5$.
\end{itemize}
\end{theorem}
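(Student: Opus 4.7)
The plan is to handle each part of the theorem separately, with (3) requiring the most work.

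For (1), let $a \in \sym(X) \subset T(X)$ be an $n$-cycle and write $X = \{x_1, \ldots, x_n\}$ with $x_i \ara x_{i+1}$ (indices mod $n$). By Lemma \ref{lad2}, any $b \in T(X)$ commuting with $a$ satisfies $x_i b \ara x_{i+1} b$, so iterating from $x_1 b$ determines $b$ entirely and forces $b$ to be a power of $a$. Since $n$ is prime, every non-identity power of $a$ is itself an $n$-cycle, so this argument applied inductively along any path in $\cg(T(X))$ starting at $a$ shows every vertex in the connected component of $a$ is a non-identity power of $a$. Constant transformations are not of this form, so they are unreachable from $a$, proving $\cg(T(X))$ is disconnected.

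For the upper bound in (3), I split on whether the endpoints lie in $\sym(X)$. If at least one of $a, b$ lies in $\sym(X) \setminus \{\id_X\}$, Lemma \ref{ltx1} gives $d(a, b) \le 5$ directly. Otherwise both $a, b$ lie in $J_{n-1}$, so Lemma \ref{lad1} together with Theorem \ref{tdia} produces a length-$\le 5$ path through non-central idempotents of $J_{n-1}$, which is also a path in $\cg(T(X))$.

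For the lower bound in (3), I construct $a, b \in T(X)$ modeled on the extremal pairs in the proof of Theorem \ref{tdia2} (Cases 1--4, using Notation \ref{ndi5}), chosen so that $a$ and $b$ satisfy the hypotheses of Lemmas \ref{ltx2} and \ref{ltx3}. Suppose a length-4 path $a = v_0 - v_1 - v_2 - v_3 - v_4 = b$ exists in $\cg(T(X))$. Lemma \ref{ltx2} rules out $v_1, v_3 \in \sym(X)$ (such a permutation would have to equal $\id_X$, which is central and so not a vertex), so $v_1, v_3 \in J_{n-1}$; appropriate idempotent powers then yield $e_1, e_3 \in J_{n-1} \setminus \{\id_X\}$ still commuting with their neighbors in the path. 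If $v_2 \in \sym(X)$, Lemma \ref{ltx3} forces $v_2 = \id_X$, a contradiction; otherwise replace $v_2$ by an idempotent $e_2 \ne \id_X$. The resulting idempotent path $a - e_1 - e_2 - e_3 - b$ falls under Lemmas \ref{lad3} and \ref{lja2}, which force $e_2$ to fix so many points of $X$ that $e_2 = \id_X$, a final contradiction.

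For (2), the same template specialized to $n = 4$ gives both bounds. The lower bound takes $a, b \in J_3$ with unique 2-cycles (the $m = 2$ analogue of the Case~1 construction in Theorem \ref{tdia2}); a hypothetical length-3 path $a - v_1 - v_2 - b$ reduces via Lemma \ref{ltx2} and the idempotent-power argument to an idempotent path $a - e_1 - e_2 - b$, and Lemma \ref{lad3} then pins down the forms of $e_1$ and $e_2$ so that they do not commute, contradicting their adjacency. For the upper bound $d(a, b) \le 4$, I combine Lemma \ref{ltx} (mixed permutation/non-permutation pairs) with a refined case analysis: pairs in $\sym(X) \setminus \{\id_X\}$ are connected via common idempotents (every non-identity permutation of $\{1,2,3,4\}$ commutes with some non-identity idempotent of $T(X)$, as a case check on cycle type shows), and pairs in $J_3$ are handled through Theorem \ref{tdia}, which gives idempotent distance $\le 3$, together with a careful choice of idempotent powers that collapses the two endpoint-to-power edges. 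The main obstacle I expect is the lower bound in (3): carrying out the permutation-exclusion via Lemmas \ref{ltx2} and \ref{ltx3} uniformly in $n$ requires tuning the sink structure in Notation \ref{ndi5} to the parity and divisibility of $n$ so that the forced idempotent $e_2$ always has rank $n$.
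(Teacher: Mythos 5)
Your part (1) and the upper bound in part (3) follow the paper's proof exactly (split on whether an endpoint is a permutation, then Lemma \ref{ltx1} versus Theorem \ref{tdia2}), and your lower-bound strategy for part (3) is the paper's argument for $n\geq9$: unique-cycle endpoints from Notation \ref{ndi5}, Lemma \ref{ltx2} to exclude permutation neighbours of $a$ and $b$, Lemma \ref{lad3} to pin down $e_1$ and $e_k$, and Lemmas \ref{ltx3} and \ref{lja2} to kill the middle vertex. That much is sound.

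The genuine gap is in the small cases, which is precisely where the paper abandons the uniform argument and verifies the claims computationally with GRAPE/GAP: $n=4$ for part (2), and $n\in\{6,8\}$ for the lower bound in part (3). The obstruction is not cosmetic. Lemma \ref{ltx3} and the constructions in Cases 1--2 of the paper's proof need $m\geq4$ (so that $A_m\cap B_1=\{z\}$ and, in the even case, $A_1\cap B_m=\{w\}$ are genuinely singletons and the stated $m$-cycle $\sigma$ exists), which forces $n\geq9$; your hope of ``tuning the sink structure'' to cover $n=6$ and $n=8$ is exactly the step you have not supplied, and the authors evidently could not supply it either. Moreover, your $n=4$ argument contains a concretely false claim: a $4$-cycle $a=(1\,2\,3\,4)$ commutes only with its own nonidentity powers (any $b$ with $ab=ba$ is determined by $1b$ via Lemma \ref{lad2} and equals $a^q$), and none of $a,a^2,a^3$ is an idempotent, so it is not true that every non-identity permutation of a $4$-set commutes with a non-identity idempotent. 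Likewise the ``$m=2$ analogue of Case 1'' degenerates (the element $x_3$ and the tail $x_2\,x_3\ldots x_{m-1}$ do not exist for $m=2$), and a lower bound computed inside $\cg(J_3)$ does not transfer to $\cg(T(X))$ without separately excluding permutation vertices, for which Lemma \ref{ltx3} gives you nothing when $2m=n$ and there is no spare point $z$. So parts (2) and the $n\in\{6,8\}$ instances of (3) remain unproved in your proposal.
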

\begin{proof}
Suppose $n=p$ is prime. Consider a $p$-cycle $a=(x_1\,x_2\ldots x_p)$ and let $b\in T(X)$ be such that $b\ne\id_X$
and $ab=ba$. Let $x_q=x_1b$. Then, by Lemma~\ref{lad2}, $x_ib=x_{q+i}$ for every $i\in\{1,\ldots,p\}$
(where $x_{q+i}=x_{q+i-m}$ if $q+i>m$). Thus $b=a^q$, and so, since $p$ is prime, $b$ is also a $p$-cycle. It follows that
if $c$ is a vertex of $\cg(T(X))$ that is not a $p$-cycle, then there is no path in $\cg(T(X))$ from
$a$ to $c$. Hence $\cg(T(X))$ is not connected. We have proved (1).

We checked the case $n=4$ directly using GRAPE \cite{So06}
through GAP \cite{Scel92}. We found that, when $|X|=4$, the diameter of $\cg(T(X))$ is $4$.

Suppose $n\geq6$ is composite.
Let $a,b\in T(X)$ such that $a,b\ne\id_X$. If $a\in\sym(X)$ or $b\in\sym(X)$, then $d(a,b)\leq5$
by Lemma~\ref{ltx1}. If $a,b\notin\sym(X)$, then $a,b\in J_{n-1}$, and so $d(a,b)\leq5$
by Theorem~\ref{tdia2}. Hence the diameter of $\cg(T(X))$ is at most $5$. It remains to find $a,b\in T(X)-\{\id_X\}$
such that $d(a,b)\geq5$.

For $n\in\{6,8\}$, we employed GAP \cite{Scel92}. When $n=6$, we found that the distance between the $6$-cycle
$a=(1\,2\,3\,4\,5\,6)$ and
$b=\begin{pmatrix}
1&2&3&4&5&6\\2&3&5&1&2&4\end{pmatrix}$ in $\cg(T(X))$ is at least $5$. And when $n=8$, the distance between
the $8$-cycle $a=(1\,2\,3\,4\,5\,6\,7\,8)$ and
$b=\begin{pmatrix}
1&2&3&4&5&6&7&8\\2&3&1&1&4&8&6&5\end{pmatrix}$ in $\cg(T(X))$ is at least $5$.

To verify this with GAP, we used the following sequence of arguments and computer calculations:
\begin{enumerate}
 \item By Lemma \ref{lad1}, if there exists a path  $a-c_1-c_2-\ldots-c_k-b$, then there exists a path  $a-e_1-e_2-\ldots-e_k-b$, where
each $e_i$ is either an idempotent or a permutation;
\item Let $E$ be the set idempotents of $T(X)-\{\id_X\}$ and let $G=\sym(X)-\{\id_X\}$.
For $A\subseteq T(X)$, let $C(A)=\{f\in E\cup G:(\exists_{a\in A}) af=fa\}$;
\item Calculate $C(C(\{a\}))$ and $C(\{b\})$;
\item Verify that for all $c\in C(C(\{a\}))$ and all $d\in C(\{b\})$, $cd\neq dc$;
\item If there were a path $a-c_1-c_2-c_3-b$ from $a$ to $b$, then we would have
$c_2\in C(C(\{a\}))$, $c_3 \in C(\{b\})$, and $c_2c_3=c_3c_2$. But, by 4., there are no such $c_2$ and $c_3$,
and it follows that the distance between $a$ and $b$ is at least $5$.
\end{enumerate}

Let $n\geq9$ be composite. We consider two cases.
\vskip 1mm
\noindent{\bf Case 1.} $n=2m+1$ is odd ($m\geq4$).
\vskip 1mm
Let $X=\{x_1,\ldots,x_m,y_1,\ldots,y_m,z\}$. Consider
\[
a=(z\,y_1\,y_2\ldots\,y_m\,x_1\ran(x_1\,x_2\ldots\,x_m)\mbox{ and }
b=(x_2\,x_3\ldots\,x_m\,x_1\,z\,y_2\ran(y_1\,y_2\ldots\,y_m).
\]
Let $\lam$ be a minimal path in $\cg(T(X))$ from $a$ to $b$.
By Lemma~\ref{ltx2}, there is no $g\in\sym(X)$ such that $g\ne\id_X$ and $ag=ga$ or $bg=gb$.
Thus, by the proof of Lemma~\ref{lad1}, $\lam=a-e_1-\cdots-e_k-b$, where $e_1$ and $e_k$ are idempotents.
By Lemma~\ref{lad3},
\[
e_1=(A_1,x_1\ran(A_2,x_2\ran\ldots(A_m,x_m\ran\mbox{ and }e_k=(B_1,y_1\ran(B_2,y_2\ran\ldots(B_m,y_m\ran,
\]
where $y_i\in A_i$ ($1\leq i\leq m$), $x_{i+1}\in B_i$ ($1\leq i<m)$, $x_1\in B_m$,
$A_m=\{x_m,y_m,z\}$, and $B_1=\{y_1,x_2,z\}$. Since $m\geq4$, $A_m\cap B_1=\{z\}$. Thus, by Lemma~\ref{ltx3},
there is no $g\in\sym(X)$ such that $g\ne\id_X$ and $e_1-g-e_k$.
Hence, if $\lam$ contains an element
$g\in\sym(X)$, then the length of $\lam$ is at least $5$. Suppose $\lam$ does not contain any permutations.
Then $\lam$ is a path in $J_{n-1}$ and we may assume that all vertices in $\lam$ except $a$ and $b$ are idempotents
(by Lemma~\ref{lad3}). By Lemma~\ref{lja2}, there is no idempotent $f\in J_{n-1}$ such that
$e_1-f-e_k$. (Here, the $m$-cycle that occurs in Lemmas~\ref{lja2} and~\ref{ltx3} is $\sigma=(1\,2\ldots m)$.)
Hence the length of $\lam$ is at least $5$.

\vskip 1mm
\noindent{\bf Case 2.} $n=2m+2$ is even ($m\geq4$).
\vskip 1mm
Let $X=\{x_1,\ldots,x_m,y_1,\ldots,y_m,z,w\}$. Consider
\[
a=(z\,y_1\,y_2\ldots\,y_m\,w\,x_2\ran(x_1\,x_2\ldots\,x_m)\mbox{ and }
b=(w\,x_2\,x_3\ldots\,x_{m-2}\,x_m\,x_1\,x_{m-1}\,y_2\ran(y_1\,y_2\ldots\,y_m).
\]
Let $\lam$ be a minimal path in $\cg(T(X))$ from $a$ to $b$.
By Lemma~\ref{ltx2}, there is no $g\in\sym(X)$ such that $g\ne\id_X$ and $ag=ga$ or $bg=gb$.
Thus, by the proof of Lemma~\ref{lad1}, $\lam=a-e_1-\cdots-e_k-b$, where $e_1$ and $e_k$ are idempotents.
By Lemma~\ref{lad3},
\[
e_1=(A_1,x_1\ran(A_2,x_2\ran\ldots(A_m,x_m\ran\mbox{ and }e_k=(B_1,y_1\ran(B_2,y_2\ran\ldots(B_m,y_m\ran,
\]
where $y_i\in A_i$ ($1\leq i\leq m$), $x_{i+1}\in B_i$ ($1\leq i\leq m-3)$, $x_m\in B_{m-2}$,
$x_1\in B_{m-1}$, $x_{m-1}\in B_m$, $A_1=\{x_1,y_1,w\}$,
$A_m=\{x_m,y_m,z\}$, $B_1=\{y_1,x_2,z\}$, and $B_m=\{y_m,x_{m-1},w\}$.
Since $m\geq4$, $A_m\cap B_1=\{z\}$ and $A_1\cap B_m=\{w\}$. Thus, by Lemma~\ref{ltx3},
there is no $g\in\sym(X)$ such that $g\ne\id_X$ and $e_1-g-e_k$. Hence, as in Case 1, the length of $\lam$
is at least $5$. (Here, the $m$-cycle that occurs in Lemmas~\ref{lja2} and~\ref{ltx3} is $\sigma=(1,2\ldots, m-3,m-2,m,m-1)$.)

Hence, if $n\geq6$ is composite, then the diameter of $\cg(T(X))$ is $5$. This concludes the proof.
\end{proof}

\section{Minimal Left Paths}\label{smlp}
\setcounter{equation}{0}

In this section, we prove that for every integer $n\geq4$, there is a band $S$ with knit degree $n$.
We will show how to construct such an $S$ as a subsemigroup of $T(X)$ for some finite set $X$.

Let $S$ be a finite non-commutative
semigroup. Recall that a path $a_1-a_2-\cdots-a_m$ in $\cg(S)$ is called a \emph{left path}
(or $l$-path) if $a_1\ne a_m$ and $a_1a_i=a_ma_i$ for every $i\in\{1,\ldots,m\}$. If there is any $l$-path in $\cg(S)$,
we define the \emph{knit degree} of $S$, denoted $\kd(S)$, to be the length of a shortest $l$-path in $\cg(S)$.
We say that an $l$-path $\lam$ from $a$ to $b$ in $\cg(S)$
is a \emph{minimal $l$-path} if there is no $l$-path from $a$ to $b$ that is shorter than $\lam$.

\subsection{The Even Case}\label{sseven}

In this subsection, we will construct a band of knit degree $n$ where $n\geq4$ is even.
For $x\in X$, we denote by $c_x$ the constant transformation with image $\{x\}$.
The following lemma is obvious.

\begin{lemma}\label{lcon}
Let $c_x,c_y,e\in T(X)$ such that $e$ is an idempotent. Then:
\begin{itemize}
  \item [\rm(1)] $c_xe=ec_x$ if and only if $x\in\ima(e)$.
  \item [\rm(2)] $c_xe=c_ye$ if and only if $(x,y)\in\Ker(e)$.
\end{itemize}
\end{lemma}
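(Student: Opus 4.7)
The plan is to directly compute the two composites $c_x e$ and $e c_x$ and read off the conditions. For any $z \in X$ we have $z(c_x e) = (z c_x) e = x e$, so $c_x e$ is the constant transformation with image $\{x e\}$, i.e., $c_x e = c_{xe}$. Similarly, $z(e c_x) = (z e) c_x = x$, so $e c_x = c_x$. These two simple identities do all the work.

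For part (1), the equation $c_x e = e c_x$ becomes $c_{xe} = c_x$, which is equivalent to $xe = x$. Since $e$ is an idempotent, the fixed points of $e$ are precisely the elements of $\ima(e)$: indeed $x \in \ima(e)$ gives $x = ye$ for some $y$, so $xe = yee = ye = x$, and conversely every fixed point is trivially in the image. For part (2), using $c_x e = c_{xe}$ and $c_y e = c_{ye}$, the equality $c_x e = c_y e$ reduces immediately to $xe = ye$, which is by definition the statement that $(x,y) \in \Ker(e)$.

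There is no real obstacle here; the only subtle point is the use of idempotency in part (1) to identify $\fix(e)$ with $\ima(e)$, so I would state that equality once at the start (or cite it in passing) and then let the rest be a one-line calculation for each of (1) and (2).
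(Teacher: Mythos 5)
Your proof is correct; the paper gives no argument at all for this lemma (it is simply declared obvious), and your direct computation of $c_xe=c_{xe}$ and $ec_x=c_x$, together with the observation that for an idempotent the fixed points coincide with the image, is exactly the verification the authors had in mind.
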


Now, given an even $n\geq4$, we will construct a band $S$ such that $\kd(S)=n$.
We will explain the construction using $n=8$ as an example.
The band $S$ will be a subsemigroups of $T(X)$, where
\[
X=\{y_0,y_1,y_2,y_3,y_4=v_0,v_1,v_2,v_3,v_4,x_1,x_2,x_3,x_4,u_1,u_2,u_3,u_4,r,s\},
\]
and it will be generated by idempotent transformations $a_1,a_2,a_3,a_4,b_1,b_2,b_3,b_4,e_1$,
whose images are defined by Table~1.

\begin{table}[h]
\[
\begin{tabular}{|c|ccc|}\hline
     $\ima(a_1)$ & $y_0$ &  $x_1$  & $y_1$ \\\hline
     $\ima(a_2)$ & $y_1$ &  $x_2$  & $y_2$ \\\hline
     $\ima(a_3)$ & $y_2$ &  $x_3$  & $y_3$ \\\hline
     $\ima(a_4)$ & $y_3$ &  $x_4$  & $y_4$ \\\hline
     $\ima(b_1)$ & $y_4$ &  $u_1$  & $v_1$ \\\hline
     $\ima(b_2)$ & $v_1$ &  $u_2$  & $v_2$ \\\hline
     $\ima(b_3)$ & $v_2$ &  $u_3$  & $v_3$ \\\hline
     $\ima(b_4)$ & $v_3$ &  $u_4$  & $v_4$ \\\hline
     $\ima(e_1)$ & $v_4$ &  $r$  & $s$ \\\hline
\end{tabular}
\]
\caption{Images of the generators.}
\end{table}
We will define the kernels in such a way that the generators with the same subscript
will have the same kernel. For example, $\Ker(a_1)=\Ker(b_1)=\Ker(e_1)$ and $\Ker(a_2)=\Ker(b_2)$.
Let $i\in\{2,3,4\}$. The kernel of $a_i$ will have the following three classes (elements of the partition $X/\Ker(a_i)$):
\begin{align}
\mbox{Class-1}&=\ima(a_{i+1})\cup\ldots\cup\ima(a_4)\cup\ima(b_1)\cup\ldots\cup\ima(b_{i-1}),\notag\\
\mbox{Class-2}&=\ima(b_{i+1})\cup\ldots\cup\ima(b_4)\cup\ima(e_1)\cup\ima(a_1)\cup\ldots\cup\ima(a_{i-1}),\notag\\
\mbox{Class-3}&=\{x_i,u_i\}.\notag
\end{align}
For example, $\Ker(a_2)$ has the following classes:
\begin{align}
\mbox{Class-1}&=\{y_2,x_3,y_3,x_4,y_4,u_1,v_1\},\notag\\
\mbox{Class-2}&=\{v_2,u_3,v_3,u_4,v_4,r,s,y_0,x_1,y_1\},\notag\\
\mbox{Class-3}&=\{x_2,u_2\}.\notag
\end{align}
We define the kernel of $a_1$ as follows:
\begin{align}
\mbox{Class-1}&=\ima(a_2)\cup\ima(a_3)\cup\ima(a_4)\cup\{s\}=\{y_1,x_2,y_2,x_3,y_3,x_4,y_4,s\},\notag\\
\mbox{Class-2}&=\ima(b_2)\cup\ima(b_3)\cup\ima(b_4)\cup\{y_0\}=\{v_1,u_2,v_2,u_3,v_3,u_4,v_4,y_0\},\notag\\
\mbox{Class-3}&=\{x_1,u_1,r\}.\notag
\end{align}

Now the generators are completely defined since $\Ker(b_i)=\Ker(a_i)$, $1\leq i\leq 4$, and $\Ker(e_1)=\Ker(a_1)$.
Order the generators as follows:
\begin{equation}\label{elist1}
a_1,\,a_2,\,a_3,\,a_4,\,b_1,\,b_2,\,b_3,\,b_4,\,e_1.
\end{equation}
Let $S$ be the semigroup generated by the idempotents listed in (\ref{elist1}).
Since the idempotents with the same subscript have the same kernel, they form a right-zero subsemigroup of $S$.
For example, $\{a_1,b_1,e_1\}$ is a right-zero semigroup: $a_1a_1=b_1a_1=e_1a_1=a_1$,
$a_1b_1=b_1b_1=e_1b_1=b_1$, and $a_1e_1=b_1e_1=e_1e_1=e_1$. The product of any two generators with different
indices is a constant transformation. For example, $a_2a_4=c_{y_3}$, $a_4a_2=c_{y_2}$,
and $a_1b_3=c_{v_3}$. The semigroup $S$ consists of the nine generators listed in (\ref{elist1}) and
$10$ constants:
\[
S=\{a_1,a_2,a_3,a_4,b_1,b_2,b_3,b_4,e_1,c_{y_0},c_{y_1},c_{y_2},c_{y_3},c_{y_4},c_{v_1},c_{v_2},c_{v_3},c_{v_4},c_s\},
\]
so $S$ is a band. Note that $Z(S)=\emptyset$.
Each idempotent in (\ref{elist1}) commutes with the next idempotent, so
$a_1-a_2-a_3-a_4-b_1-b_2-b_3-b_4-e_1$ is a path in $\cg(S)$. Moreover, it is a unique $l$-path in $\cg(S)$,
so $\kd(S)=8$.

We will now provide a general construction of a band $S$ such that $\kd(S)=n$, where $n$ is even.

\begin{defi}\label{dco1}
{\rm
Let $k\geq2$ be an integer. Let
\[
X=\{y_0,y_1,\ldots,y_k=v_0,v_1,\ldots,v_k,x_1,\ldots,x_k,u_1,\ldots,u_k,r,s\}.
\]
We will define idempotents $a_1,\ldots,a_k,b_1,\ldots,b_k,e_1$ as follows. For $i\in\{1,\ldots,k\}$, let
\begin{align}
\ima(a_i)&=\{y_{i-1},x_i,y_i\},\notag\\
\ima(b_i)&=\{v_{i-1},u_i,v_i\},\notag\\
\ima(e_1)&=\{v_k,r,s\}.\notag
\end{align}

For $i\in\{2,\ldots,k\}$, define the $\Ker(a_i)$-classes by:
\begin{align}
\mbox{Class-1}&=\ima(a_{i+1})\cup\ldots\cup\ima(a_k)\cup\ima(b_1)\cup\ldots\cup\ima(b_{i-1}),\notag\\
\mbox{Class-2}&=\ima(b_{i+1})\cup\ldots\cup\ima(b_k)\cup\ima(e_1)\cup\ima(a_1)\cup\ldots\cup\ima(a_{i-1}),\notag\\
\mbox{Class-3}&=\{x_i,u_i\}.\notag
\end{align}
(Note that for $i=k$, $\mbox{Class-1}=\ima(b_1)\cup\ldots\cup\ima(b_{k-1})$ and
$\mbox{Class-2}=\ima(e_1)\cup\ima(a_1)\cup\ldots\cup\ima(a_{i-1})$.)

Define the $\Ker(a_1)$-classes by:
\begin{align}
\mbox{Class-1}&=\ima(a_2)\cup\ldots\cup\ima(a_k)\cup\{s\},\notag\\
\mbox{Class-2}&=\ima(b_2)\cup\ldots\cup\ima(b_k)\cup\{y_0\},\notag\\
\mbox{Class-3}&=\{x_1,u_1,r\}.\notag
\end{align}
Let $\Ker(b_i)=\Ker(a_i)$ for every $i\in\{1,\ldots,k\}$, and $\Ker(e_1)=\Ker(a_1)$.
Now, define the subsemigroup $S_0^k$ of $T(X)$ by:
\begin{equation}\label{edco1}
S_0^k=\mbox{the semigroup generated by $\{a_1,\ldots,a_k,b_1,\ldots,b_k,e_1\}$.}
\end{equation}
}
\end{defi}

We must argue that the idempotents $a_1,\ldots,a_k,b_1,\ldots,b_k,e_1$ are well defined, that is,
for each of them, different elements of the image lie in different kernel classes.
Consider $a_i$, where $i\in\{2,\ldots,k\}$. Then $\ima(a_i)=\{y_{i-1},x_i,y_i\}$.
Then $y_i$ lies in Class-1 (see Definition~\ref{dco1}) since $y_i\in\ima(a_{i+1})$
(or $y_i\in\ima(b_1)$ if $i=k$), $y_{i-1}$ lies in Class-2 since $y_{i-1}\in\ima(a_{i-1})$,
and $x_i$ lies in Class-3. Arguments for the remaining idempotents are similar.

For the remainder of this subsection, $S_0^k$ will be the semigroup (\ref{edco1}).
Our objective is to prove
that $S_0^k$ is a band such that $\pi=a_1-\cdots-a_k-b_1-\cdots-b_k-e_1$ is a shortest $l$-path in $S_0^k$.
Since $\pi$ has length $2k=n$, it will follow that $S_0^k$ is a band with knit degree $n$.

We first analyze products of the generators of $S_0^k$.

\begin{lemma}\label{lev2}
Let $1\leq i<j\leq k$. Then:
\begin{itemize}
  \item [\rm(1)] $a_ib_i=b_i$, $b_ia_i=a_i$, $a_1e_1=b_1e_1=e_1$, $e_1a_1=b_1a_1=a_1$, and $e_1b_1=a_1b_1=b_1$.
  \item [\rm(2)] $a_ia_j=c_{y_{j-1}}$ and $a_ja_i=c_{y_i}$.
  \item [\rm(3)] $a_ib_j=c_{v_j}$ and $a_jb_i=c_{v_{i-1}}$.
  \item [\rm(4)] $b_ia_j=c_{y_j}$ and $b_ja_i=c_{y_{i-1}}$.
  \item [\rm(5)] $b_ib_j=c_{v_{j-1}}$ and $b_jb_i=c_{v_i}$.
  \item [\rm(6)] $e_1a_j=c_{y_{j-1}}$ and $a_je_1=c_s$.
  \item [\rm(7)] $e_1b_j=c_{v_j}$ and $b_je_1=c_{v_k}$.
\end{itemize}
\end{lemma}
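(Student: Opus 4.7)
The proof breaks cleanly into two parts. Part (1) follows from a general observation about idempotents: if $e,f\in T(X)$ are idempotents with $\Ker(e)=\Ker(f)$, then $ef=f$. Indeed, for every $x\in X$ the element $xe$ lies in the same kernel class $[x]$ as $x$, and since $f$ collapses each kernel class to a single point, $xef=(xe)f=xf$. Applying this to the pairs $(a_i,b_i)$ and $(b_i,a_i)$ for each $i$, and to all ordered pairs drawn from $\{a_1,b_1,e_1\}$ (which share $\Ker(a_1)$ by construction), yields every identity in (1).

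For parts (2)--(7), the strategy is uniform: given generators $p,q$ with distinct indices, I will show that the three-element set $\ima(p)$ is contained in a single $\Ker(q)$-class, whence $pq$ is the constant transformation onto the unique point of $\ima(q)$ indexing that class. The specific constant can then be read off from Definition~\ref{dco1}, which assigns Class-1, Class-2, Class-3 of $\Ker(a_j)$ (and analogously of $\Ker(b_j)$, $\Ker(e_1)$) to the images $y_j$ (or $v_j$), $y_{j-1}$ (or $v_{j-1}$), and $x_j$ (or $u_j, r$), respectively. For example, when $i<j$ and $j\geq 2$, Class-2 of $\Ker(a_j)$ contains $\ima(a_1)\cup\cdots\cup\ima(a_{j-1})$, so $\ima(a_i)\subseteq$ Class-2, giving $a_ia_j=c_{y_{j-1}}$; dually, Class-1 of $\Ker(a_i)$ contains $\ima(a_{i+1})\cup\cdots\cup\ima(a_k)$, so $\ima(a_j)\subseteq$ Class-1, giving $a_ja_i=c_{y_i}$. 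The remaining identities in (3)--(7) are established by the same inclusion check, reading off the image-to-class correspondence in each case.

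The only subtlety lies in the asymmetric structure of $\Ker(a_1)=\Ker(e_1)$, whose classes are $\ima(a_2)\cup\cdots\cup\ima(a_k)\cup\{s\}$, $\ima(b_2)\cup\cdots\cup\ima(b_k)\cup\{y_0\}$, and $\{x_1,u_1,r\}$, rather than the uniform pattern used for $i\geq 2$. This forces slight bookkeeping when $i=1$ (in particular for the identities involving $e_1$ in (6)--(7), where $a_je_1=c_s$ and $b_je_1=c_{v_k}$ use the presence of $s$ in Class-1 of $\Ker(e_1)=\Ker(a_1)$ and the assignment of $\ima(b_j)$ to Class-1 of $\Ker(b_1)$), and also at the upper end $j=k$, where Class-1 of $\Ker(a_k)$ contains only the $b$-images. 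There is no single hard step; the main obstacle is simply keeping the inclusions of $\ima(a_i),\ima(b_i),\ima(e_1)$ into the kernel classes of every other generator organized, after which each of the fourteen identities in (2)--(7) collapses to a one-line verification.
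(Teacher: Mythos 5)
Your overall strategy is exactly the paper's: part (1) holds because generators with a common subscript share a kernel (your general observation that idempotents with equal kernels form a right-zero semigroup is the same point, made explicit), and for (2)--(7) one shows that $\ima(p)$ sits inside a single $\Ker(q)$-class and reads off the resulting constant. Your treatment of (2) is correct and matches the paper's computation.

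However, the ``image-to-class correspondence'' you propose for reading off the remaining cases is stated incorrectly for the $b$-generators, and applying it literally would falsify five of the fourteen identities. For $j\geq 2$, the element of $\ima(b_j)=\{v_{j-1},u_j,v_j\}$ lying in Class-1 of $\Ker(b_j)=\Ker(a_j)$ is $v_{j-1}$ (it belongs to $\ima(b_{j-1})$), and the element lying in Class-2 is $v_j$ (it belongs to $\ima(b_{j+1})$, or to $\ima(e_1)$ when $j=k$); so $b_j$ sends Class-1 to $v_{j-1}$ and Class-2 to $v_j$ --- the \emph{opposite} of the pattern for $a_j$, which sends Class-1 to $y_j$ and Class-2 to $y_{j-1}$. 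This reversal is not incidental: it is precisely why $a_ia_j=c_{y_{j-1}}$ while $a_ib_j=c_{v_j}$, even though in both products $\ima(a_i)$ lies in Class-2 of the same kernel. With your stated rule (Class-1 $\mapsto v_j$, Class-2 $\mapsto v_{j-1}$) one would derive $a_ib_j=c_{v_{j-1}}$, $a_jb_i=c_{v_i}$, $b_ib_j=c_{v_j}$, $b_jb_i=c_{v_{i-1}}$, and $e_1b_j=c_{v_{j-1}}$, all of which contradict the lemma (and Table~2). A related slip: $\ima(b_j)$ lies in Class-2, not Class-1, of $\Ker(e_1)=\Ker(a_1)$, and it is $e_1$ mapping Class-2 to $v_k$ that gives $b_je_1=c_{v_k}$. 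The method is sound and identical to the paper's; the bookkeeping rule must be corrected before the remaining ``one-line verifications'' actually produce the constants claimed in the statement.
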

\begin{proof}
Statement (1) is true because the generators of $S_0^k$ are idempotents and the ones with the same subscript
have the same kernel.
By Definition~\ref{dco1}, Class-2 of $\Ker(a_j)$ contains both $\ima(a_{j-1})=\{y_{j-2},x_{j-1},y_{j-1}\}$
and $\ima(a_i)$ (since $i<j$). Since $y_{j-1}\in\ima(a_j)=\{y_{j-1},x_j,y_j\}$, $a_j$ maps all elements
of Class-2 to $y_{j-1}$. Hence $a_ia_j=c_{y_{j-1}}$. Similarly, since $i<j$, Class-1 of $\Ker(a_i)$ contains
both $\ima(a_{i+1})=\{y_i,x_{i+1},y_{i+1}\}$
and $\ima(a_j)$. Since $y_i\in\ima(a_i)=\{y_{i-1},x_i,y_i\}$, $a_i$ maps all elements
of Class-1 to $y_i$. Hence $a_ja_i=c_{y_i}$. We have proved (2). Proofs of (3)-(7) are similar.
For example, $b_je_1=c_{v_k}$ because Class-2 of $\Ker(e_1)=\Ker(a_1)$ contains both $\ima(b_j)$
and $\ima(b_k)=\{v_{k-1},u_k,v_k\}$, and $v_k\in\ima(e_1)$.
\end{proof}

The following corollaries are immediate consequences of Lemma~\ref{lev2}.

\begin{corollary}\label{cev2}
The semigroup $S_0^k$ is a band. It consists of $2k+1$ generators from Definition~{\rm \ref{dco1}}
and $2k+2$ constant transformations:
\[
S_0^k=\{a_1,\ldots,a_k,b_1,\ldots,b_k,e_1,c_{y_0},c_{y_1},\ldots,c_{y_k},c_{v_1},\ldots,c_{v_k},c_s\}.
\]
\end{corollary}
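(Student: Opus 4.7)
The plan is to verify (a) every element listed in the corollary lies in $S_0^k$, (b) the listed set is closed under composition, and then to conclude the band claim from the fact that each listed element is an idempotent of $T(X)$.

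First, each generator is an idempotent by construction (Definition~\ref{dco1}), and parts (2)--(7) of Lemma~\ref{lev2} exhibit each of the $2k+2$ constants $c_{y_0},\ldots,c_{y_k},c_{v_1},\ldots,c_{v_k},c_s$ as a product of two generators, so every listed element lies in $S_0^k$. For closure, products of two generators are handled directly by Lemma~\ref{lev2}: part (1) covers same-subscript pairs (including all pairings inside $\{a_1,b_1,e_1\}$, which share a common kernel) and returns a generator, while parts (2)--(7) cover different-subscript pairs and return one of the listed constants. Products involving a constant reduce via the identities $t\cdot c_x=c_x$ and $c_x\cdot t=c_{xt}$, which instantly dispose of $c_x\cdot c_y$ and $g\cdot c_x$, so the only case requiring genuine inspection is $c_x\cdot g=c_{xg}$ for a listed constant $c_x$ and a generator $g$.

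For this last case the required statement is that $xg\in\{y_0,\ldots,y_k,v_1,\ldots,v_k,s\}$ whenever $x$ is one of these subscripts. By Definition~\ref{dco1}, the third class of $\Ker(g)$ is either $\{x_i,u_i\}$ (for $g=a_i$ or $g=b_i$, $i\geq 2$) or $\{x_1,u_1,r\}$ (for $g\in\{a_1,b_1,e_1\}$), and no $y_j$, $v_j$ or $s$ lies in either set; hence $x$ always sits in the first or second class of $\Ker(g)$. A direct inspection of the generators' images then shows that, after discarding the unique image point in the third class, what remains is $\{y_{i-1},y_i\}$ for $a_i$, $\{v_{i-1},v_i\}$ for $b_i$ (with $v_0=y_k$), and $\{v_k,s\}$ for $e_1$, each of which is a subset of the listed subscripts. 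Thus $xg$ is a listed subscript, $c_{xg}$ lies in the claimed set, and closure follows.

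The main obstacle is purely organisational: one must confirm that the ``connector'' roles assigned to the $y$- and $v$-points in Definition~\ref{dco1} really do keep them out of the third kernel class of every generator uniformly. Once this single structural observation is in place, no further case analysis is needed, and $S_0^k$ is a band because every element in the listed set is an idempotent of $T(X)$.
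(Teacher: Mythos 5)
Your proof is correct and follows essentially the same route as the paper, which obtains the corollary directly from the products of generators computed in Lemma~\ref{lev2}. The only difference is organisational: your closure check requires analysing $c_x\cdot g$ for the listed constants $c_x$ (your kernel-class observation), a computation the paper sidesteps by implicitly inducting on products of generators associated from the right, so that only $g\cdot c_x=c_x$ is needed, and which it in any case records afterwards as Lemma~\ref{lev2a}.
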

\begin{corollary}\label{cev2a}
Let $g,h\in S_0^k$ be generators from the list
\begin{equation}\label{ecev2}
a_1,\ldots,a_k,b_1,\ldots,b_k,e_1.
\end{equation}
Then $gh=hg$ if and only if $g$ and $h$ are consecutive elements in the list.
\end{corollary}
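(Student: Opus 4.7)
The plan is to prove this corollary by a direct case analysis using the product table of Lemma~\ref{lev2}, handling the forward and reverse directions separately.

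For the forward direction (consecutive pairs commute), I would check the four types of consecutive pair in the list (\ref{ecev2}). Parts (2) and (5) of Lemma~\ref{lev2} give $a_ia_{i+1}=a_{i+1}a_i=c_{y_i}$ and $b_ib_{i+1}=b_{i+1}b_i=c_{v_i}$, and part (7) gives $e_1b_k=b_ke_1=c_{v_k}$. The only slightly delicate case is the seam pair $(a_k,b_1)$: parts (3) and (4) with $i=1,\,j=k$ yield $a_kb_1=c_{v_0}$ and $b_1a_k=c_{y_k}$, which coincide because $v_0=y_k$ by the definition of $X$ in Definition~\ref{dco1}.

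For the reverse direction, I would dispose of the remaining pairs by the same table. Same-subscript pairs, namely $(a_i,b_i)$ for $1\le i\le k$ together with $(a_1,e_1)$ and $(b_1,e_1)$, sit in a right-zero subsemigroup by part (1), so their two products yield distinct elements and they do not commute. For $(a_p,a_q)$ and $(b_p,b_q)$ with $|p-q|\ge 2$, parts (2) and (5) produce constants whose fixed points differ. For crossing pairs $(a_p,b_q)$ with $p\ne q$ and $(p,q)\ne(k,1)$, parts (3) and (4) give one product of the form $c_{v_*}$ and the other of the form $c_{y_*}$; these can coincide only through the identification $v_0=y_k$, which forces exactly $(p,q)=(k,1)$. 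The remaining pairs $(a_p,e_1)$ with $p>1$ and $(b_q,e_1)$ with $1<q<k$ are handled analogously via parts (6) and (7), where the two resulting constants have distinct fixed points.

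I do not anticipate any real obstacle; the argument is a bounded case split once Lemma~\ref{lev2} is in hand. The one conceptual observation is that the identification $v_0=y_k$ built into Definition~\ref{dco1} is precisely what welds the $a$-chain to the $b$-chain in $\cg(S_0^k)$, producing the single cross-adjacency $a_k - b_1$ at the seam and no other. This is exactly what one needs so that the path $a_1-\cdots-a_k-b_1-\cdots-b_k-e_1$ is forced to traverse all $2k+1$ generators in order.
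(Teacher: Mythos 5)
Your proposal is correct and takes the same route as the paper, which simply declares the corollary an immediate consequence of Lemma~\ref{lev2}; you have merely written out the bounded case check that the paper leaves implicit. Your observation that the identification $v_0=y_k$ is what produces the single seam adjacency $a_k-b_1$ is accurate and matches the computation $a_kb_1=c_{v_0}=c_{y_k}=b_1a_k$ given by parts (3) and (4) of that lemma.
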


Lemma~\ref{lev2} gives a partial multiplication table for $S_0^k$. The following lemma
completes the table.

\begin{lemma}\label{lev2a}
Let $1\leq p\leq k$ and $1\leq i<j\leq k$. Then:
\begin{itemize}
  \item [\rm(1)] $c_{y_p}a_p=c_{y_p}$,
$c_{y_p}b_p=c_{v_{p-1}}$, $c_{y_i}a_j=c_{y_{j-1}}$, $c_{y_j}a_i=c_{y_i}$,
$c_{y_i}b_j=c_{v_j}$, $c_{y_j}b_i=c_{v_{i-1}}$, $c_{y_p}e_1=c_s$,
$c_{y_0}a_p=c_{y_{p-1}}$, $c_{y_0}b_p=c_{v_p}$, and $c_{y_0}e_1=c_{v_k}$.
  \item [\rm(2)] $c_{v_p}a_p=c_{y_{p-1}}$, $c_{v_p}b_p=c_{v_p}$,
$c_{v_i}a_j=c_{y_j}$, $c_{v_j}a_i=c_{y_{i-1}}$, $c_{v_i}b_j=c_{v_{j-1}}$, $c_{v_j}b_i=c_{v_i}$, and $c_{v_p}e_1=c_{v_k}$.
  \item [\rm(3)] $c_sa_j=c_{y_{j-1}}$, $c_sb_j=c_{v_j}$, $c_sa_1=c_{y_1}$, $c_sb_1=c_{v_0}$, and $c_se_1=c_s$.
\end{itemize}
\end{lemma}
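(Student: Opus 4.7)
The plan is to exploit the trivial identity $c_x f = c_{xf}$, which holds for every $x\in X$ and every $f\in T(X)$ because $c_x f$ sends every element of $X$ first to $x$ and then to $xf$. Under this reduction, each of the twenty-odd identities in the statement collapses to a single image computation $xf$, for a specific $x\in X$ and a specific generator $f$ of $S_0^k$.

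To carry out these image computations I would invoke the fact that every generator $f\in\{a_1,\ldots,a_k,b_1,\ldots,b_k,e_1\}$ is an idempotent whose kernel partition of $X$ has exactly three classes, each described explicitly in Definition~\ref{dco1}. Because $f$ is idempotent, each $\Ker(f)$-class is collapsed by $f$ to the unique point of $\ima(f)$ that it contains. So $xf$ is obtained by a two-step lookup: first locate the $\Ker(f)$-class $C$ containing $x$, then identify the unique element of $C\cap\ima(f)$. This is exactly the schema already used to prove Lemma~\ref{lev2}.

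With this machinery fixed, every identity in (1)--(3) follows by direct inspection of Definition~\ref{dco1}. For instance, to verify $c_{y_p}b_p=c_{v_{p-1}}$ I would use $\Ker(b_p)=\Ker(a_p)$ together with the observations that $y_p$ lies in Class-$1$ of $\Ker(a_p)$ (via $y_p\in\ima(a_{p+1})$ when $p<k$, via $y_k=v_0\in\ima(b_1)$ when $p=k$, and via $y_1\in\ima(a_2)$ when $p=1$) and that the unique element of $\ima(b_p)=\{v_{p-1},u_p,v_p\}$ sitting in Class-$1$ is $v_{p-1}$; hence $y_pb_p=v_{p-1}$. All other identities are confirmed by the same two-step procedure, typically taking the data $x=y_i,y_j,v_i,v_j,s,y_0$ and the generator $f=a_j,b_j,e_1$, and reading off which of the three kernel classes contains $x$.

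The main obstacle is purely clerical: the list (1)--(3) has many cases, and the kernel-class descriptions in Definition~\ref{dco1} differ slightly at the boundaries $p=1$ and $p=k$ (the definition of $\Ker(a_1)$ uses the singletons $\{s\}$ and $\{y_0\}$ in place of the generic $\ima(a_{p-1})$ and $\ima(b_k)$), and one must also remember the identification $v_0=y_k$ when interpreting $c_{v_{p-1}}$ at $p=1$. Once these edge cases are handled with the same lookup scheme, every identity of the lemma falls out immediately.
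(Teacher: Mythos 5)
Your proposal is correct and uses the same core mechanism as the paper: reduce $c_xf$ to $c_{xf}$ and evaluate $xf$ by locating the $\Ker(f)$-class containing $x$ and reading off the unique point of $\ima(f)$ in that class, with care at the boundary cases $p=1$, $p=k$ and the identification $v_0=y_k$. The only (inessential) difference is that the paper dispatches the four identities $c_{y_i}a_j$, $c_{y_j}a_i$, $c_{y_i}b_j$, $c_{y_j}b_i$ by associativity together with Lemma~\ref{lev2} (e.g.\ $c_{y_i}a_j=(c_{y_i}a_i)a_j=c_{y_i}(a_ia_j)=c_{y_{j-1}}$) instead of a fresh kernel-class lookup, whereas you compute all cases uniformly; both routes are valid.
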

\begin{proof}
We have
$c_{y_p}a_p=c_{y_p}$ since $y_p\in\ima(a_p)$.
By Definition~\ref{dco1}, Class-1 of $\Ker(b_p)$ contains both $\ima(a_{p+1})$
and $\ima(b_{p-1})$. Since $y_p\in\ima(a_{p+1})$ and $v_{p-1}\in\ima(b_{p-1})$, both $y_p$ and $v_{p-1}$
are in Class-1. Hence $y_pb_p=v_{p-1}b_p=v_{p-1}$, where the last equality is true because $v_{p-1}\in\ima(b_p)$.
Thus $c_{y_p}b_p=c_{v_{p-1}}$.
By Definition~\ref{dco1}, $y_p$ and $s$ belong to Class-1 of $\Ker(e_1)$, and $s\in\ima(e_1)$. It
follows that $c_{y_p}e_1=c_s$. Again by Definition~\ref{dco1}, $y_0$ and $y_{p-1}$ belong to Class-2 of
$\Ker(a_p)$, and $y_{p-1}\in\ima(a_p)$. Hence $c_{y_0}a_p=c_{y_{p-1}}$. Similarly,
$c_{y_0}b_p=c_{v_p}$ and $c_{y_0}e_1=c_{v_k}$.
By Lemma~\ref{lev2},
\begin{align}
c_{y_i}a_j&=(c_{y_i}a_i)a_j=c_{y_i}(a_ia_j)=c_{y_i}c_{y_{j-1}}=c_{y_{j-1}},\notag\\
c_{y_j}a_i&=(c_{y_j}a_j)a_i=c_{y_j}(a_ja_i)=c_{y_j}c_{y_i}=c_{y_i},\notag\\
c_{y_i}b_j&=(c_{y_i}a_i)b_j=c_{y_i}(a_ib_j)=c_{y_i}c_{v_j}=c_{v_j},\notag\\
c_{y_j}b_i&=(c_{y_j}a_j)b_i=c_{y_j}(a_jb_i)=c_{y_j}c_{v_{i-1}}=c_{v_{i-1}}.\notag
\end{align}
We have proved (1). Proofs of (2) and (3) are similar.
\end{proof}

Table~2 presents the Cayley table for $S_0^2$.

\begin{table}[h]
\[
\begin{tabular}{c|ccccccccccc}
     & $a_1$ &  $a_2$  & $b_1$ & $b_2$ & $e_1$ & $c_{y_0}$ & $c_{y_1}$ & $c_{y_2}$ & $c_{v_1}$ & $c_{v_2}$ & $c_s$ \\\hline
 $a_1$ & $a_1$ &  $c_{y_1}$  & $b_1$ & $c_{v_2}$ & $e_1$ & $c_{y_0}$ & $c_{y_1}$ & $c_{y_2}$ & $c_{v_1}$ & $c_{v_2}$ & $c_s$ \\
 $a_2$ & $c_{y_1}$ &  $a_2$  & $c_{y_2}$ & $b_2$ & $c_s$ & $c_{y_0}$ & $c_{y_1}$ & $c_{y_2}$ & $c_{v_1}$ & $c_{v_2}$ & $c_s$ \\
 $b_1$ & $a_1$ &  $c_{y_2}$  & $b_1$ & $c_{v_1}$ & $e_1$ & $c_{y_0}$ & $c_{y_1}$ & $c_{y_2}$ & $c_{v_1}$ & $c_{v_2}$ & $c_s$ \\
 $b_2$ & $c_{y_0}$ &  $a_2$  & $c_{v_1}$ & $b_2$ & $c_{v_2}$ & $c_{y_0}$ & $c_{y_1}$ & $c_{y_2}$ & $c_{v_1}$ & $c_{v_2}$ & $c_s$ \\
 $e_1$ & $a_1$ &  $c_{y_1}$  & $b_1$ & $c_{v_2}$ & $e_1$ & $c_{y_0}$ & $c_{y_1}$ & $c_{y_2}$ & $c_{v_1}$ & $c_{v_2}$ & $c_s$ \\
 $c_{y_0}$ & $c_{y_0}$ &  $c_{y_1}$  & $c_{v_1}$ & $c_{v_2}$ & $c_{v_2}$ & $c_{y_0}$ & $c_{y_1}$ & $c_{y_2}$ & $c_{v_1}$ & $c_{v_2}$ & $c_s$ \\
 $c_{y_1}$ & $c_{y_1}$ &  $c_{y_1}$  & $c_{y_2}$ & $c_{v_2}$ & $c_{s}$ & $c_{y_0}$ & $c_{y_1}$ & $c_{y_2}$ & $c_{v_1}$ & $c_{v_2}$ & $c_s$ \\
 $c_{y_2}$ & $c_{y_1}$ &  $c_{y_2}$  & $c_{y_2}$ & $c_{v_1}$ & $c_{s}$ & $c_{y_0}$ & $c_{y_1}$ & $c_{y_2}$ & $c_{v_1}$ & $c_{v_2}$ & $c_s$ \\
 $c_{v_1}$ & $c_{y_0}$ &  $c_{y_2}$  & $c_{v_1}$ & $c_{v_1}$ & $c_{v_2}$ & $c_{y_0}$ & $c_{y_1}$ & $c_{y_2}$ & $c_{v_1}$ & $c_{v_2}$ & $c_s$ \\
 $c_{v_2}$ & $c_{y_0}$ &  $c_{y_1}$  & $c_{v_1}$ & $c_{v_2}$ & $c_{v_2}$ & $c_{y_0}$ & $c_{y_1}$ & $c_{y_2}$ & $c_{v_1}$ & $c_{v_2}$ & $c_s$ \\
 $c_s$ & $c_{y_1}$ &  $c_{y_1}$  & $c_{y_2}$ & $c_{v_2}$ & $c_{s}$ & $c_{y_0}$ & $c_{y_1}$ & $c_{y_2}$ & $c_{v_1}$ & $c_{v_2}$ & $c_s$ \\
\end{tabular}
\]
\caption{Cayley table for $S_0^2$.}
\end{table}

\begin{lemma}\label{lev3}
Let $g,h,c_z\in S_0^k$ such that $c_z$ is a constant and $g-c_z-h$ is a path in $\cg(S_0^k)$. Then $gh=hg$.
\end{lemma}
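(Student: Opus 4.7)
The plan is to reduce the claim to showing that any two idempotent generators of $S_0^k$ that share an image element must be equal or consecutive in the list $a_1, a_2, \ldots, a_k, b_1, b_2, \ldots, b_k, e_1$, and then invoke Corollary~\ref{cev2a}.

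First I would rule out the possibility that $g$ (or $h$) is a constant. For distinct $x, z \in X$, one has $c_x c_z = c_z$ and $c_z c_x = c_x$, so two distinct constants never commute. Since the path condition $g - c_z$ forces $g \ne c_z$ (and similarly $h \ne c_z$), neither $g$ nor $h$ can be a constant transformation; both therefore lie in $\{a_1, \ldots, a_k, b_1, \ldots, b_k, e_1\}$ and are idempotents.

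Applying Lemma~\ref{lcon}(1) to the edges $g - c_z$ and $c_z - h$ yields $z \in \ima(g) \cap \ima(h)$. The central bookkeeping step is to inspect the images recorded in Definition~\ref{dco1}: $\ima(a_i) = \{y_{i-1}, x_i, y_i\}$, $\ima(b_i) = \{v_{i-1}, u_i, v_i\}$, $\ima(e_1) = \{v_k, r, s\}$, using $y_k = v_0$. A direct check shows that each point in the union of these images lies in at most two of them, and when it lies in two, the corresponding pair of generators is consecutive in the list $a_1, \ldots, a_k, b_1, \ldots, b_k, e_1$: the overlaps are precisely $\ima(a_i) \cap \ima(a_{i+1}) = \{y_i\}$ for $1 \le i \le k-1$, $\ima(a_k) \cap \ima(b_1) = \{y_k\} = \{v_0\}$, $\ima(b_i) \cap \ima(b_{i+1}) = \{v_i\}$ for $1 \le i \le k-1$, and $\ima(b_k) \cap \ima(e_1) = \{v_k\}$; the points $x_i$, $u_i$, $y_0$, $r$, $s$ each appear in only one image.

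From this, either $g = h$, in which case $gh = hg$ trivially as $g$ is idempotent, or $g$ and $h$ are two distinct consecutive generators, in which case Corollary~\ref{cev2a} delivers $gh = hg$. No genuine obstacle is present here; the whole point of the construction in Definition~\ref{dco1} is that image-overlaps occur only between neighbors in the generator list, and the lemma is essentially the immediate payoff of that design.
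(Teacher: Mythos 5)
Your proof is correct and follows essentially the same route as the paper's: rule out constants for $g$ and $h$, use Lemma~\ref{lcon} to place $z$ in $\ima(g)\cap\ima(h)$, observe that the images of the generators overlap only for consecutive entries of the list, and conclude via Corollary~\ref{cev2a}. (The case $g=h$ you mention cannot actually occur, since the vertices of a path are pairwise distinct by definition, but this does not affect the argument.)
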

\begin{proof}
Note that $g,h$ are not constants since different constants do not commute. Thus
$g$ and $h$ are generators from list (\ref{ecev2}). We may assume that $g$ is to the left of $h$
in the list. Since $c_z$ commutes with both $g$ and $h$, $z\in\ima(g)\cap\ima(h)$ by Lemma~\ref{lcon}.
Suppose $g=a_i$, where $1\leq i\leq k-1$. Then $h=a_{i+1}$ since $a_{i+1}$ is the only generator to the right of $a_i$
whose image is not disjoint from $\ima(a_i)$. Similarly, if $g=a_k$ then $h=b_1$; if $g=b_i$ ($1\leq i\leq k-1$) then $h=b_{i+1}$;
and if $g=b_k$ then $h=e_1$. Hence $gh=hg$ by Corollary~\ref{cev2a}.
\end{proof}

\begin{lemma}\label{lev4} The paths
\begin{itemize}
  \item [\rm(i)] $\tau_1=c_{y_0}-a_1-\cdots-a_k-b_1-\cdots-b_k-c_{v_k}$,
  \item [\rm(ii)] $\tau_2=c_{y_1}-a_2-\cdots-a_k-b_1-\cdots-b_k-e_1-c_s$
\end{itemize}
are the only minimal $l$-paths in $\cg(S_0^k)$ with constants as the endpoints.
\end{lemma}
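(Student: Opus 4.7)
The plan is to verify that $\tau_1$ and $\tau_2$ are indeed $l$-paths by direct computation, and then use a structural analysis of paths in $\cg(S_0^k)$ together with a ``kernel profile'' of each constant to conclude that these are the only minimal $l$-paths between constants. For the verification, since $\Ker(b_i)=\Ker(a_i)$ for $1\le i\le k$ and $\Ker(e_1)=\Ker(a_1)$, it suffices to check $(y_0,v_k)\in\Ker(a_i)$ and $(y_1,s)\in\Ker(a_i)$ for every $i\in\{1,\ldots,k\}$; from Definition \ref{dco1}, $y_0$ and $v_k$ both lie in Class-$2$ of each $\Ker(a_i)$, and $y_1$ and $s$ both lie in Class-$1$ of $\Ker(a_1)$ and in Class-$2$ of $\Ker(a_i)$ for $i\ge 2$, so this step is routine.

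The structural step uses Corollary \ref{cev2a} and Lemma \ref{lev3} to force the non-constant vertices on any path in $\cg(S_0^k)$ to form a strictly monotonic run $g_p,g_{p+1},\ldots,g_q$ of consecutive entries of the list $a_1,\ldots,a_k,b_1,\ldots,b_k,e_1$ (indexed by positions $1,\ldots,2k+1$), with any interior constant $c_u$ between $g_j,g_{j+1}$ forced to satisfy $u\in\ima(g_j)\cap\ima(g_{j+1})$. An $l$-path from $c_z$ to $c_w$ then demands $z\in\ima(g_p)$, $w\in\ima(g_q)$, and $(z,w)\in\Ker(g_j)$ for every $j\in[p,q]$, and its length equals $q-p+2$ plus the number of interior constants. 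Because $\Ker(g_j)$ depends only on $j$ modulo $k$, I set $K_i:=\Ker(a_i)$ and assign to each constant value $z$ its kernel profile $(C_1(z),\ldots,C_k(z))$, where $C_i(z)$ is the class of $K_i$ containing $z$. A direct computation from Definition \ref{dco1} yields that $\{y_0,v_k\}$ share the profile $(2,2,\ldots,2)$, that $\{y_1,s\}$ share $(1,2,2,\ldots,2)$, and that every other constant in $S_0^k$ has a profile distinct from all the others.

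Whenever $q-p+1\ge k$, every $K_i$ appears among the used kernels and a full profile match is forced, so only the two admissible pairs $(y_0,v_k)$ and $(y_1,s)$ can survive. The attachment-position data ($c_{y_0}$ only at $1$, $c_{v_k}$ only at $\{2k,2k+1\}$, $c_{y_1}$ only at $\{1,2\}$, $c_s$ only at $\{2k+1\}$, read off from the generator images) then force $q-p\ge 2k-1$ for each admissible pair, so the minimum $l$-path length is $2k+1$ and equality is attained uniquely (no interior constants, extremal choice of $p$ and $q$) by $\tau_1$ and $\tau_2$ respectively.

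The main obstacle is to rule out short $l$-paths between pairs of distinct constants whose kernel profiles disagree; such a path would have span less than $k$, so the set $I$ of used kernel indices is a proper cyclic arc in $\{1,\ldots,k\}$. I would handle this by a finite case check: for each such pair $(c_z,c_w)$ the admissible attachment positions bound the possible arcs $I$, and one verifies that either every allowable $I$ contains a disagreement kernel of $z$ and $w$, or else the unique candidate $w\in\ima(g_q)$ fails $(z,w)\in K_i$ for some $i\in I$, since each class of $K_i$ meets $\ima(g_q)$ in at most one element. The most delicate sub-cases are $(y_j,y_{j+1})$, $(v_j,v_{j+1})$, and those involving $y_k=v_0$ whose attachment positions straddle the two halves of the list, but in each of them the kernel $K_{j+1}$ or $K_1$ is unavoidably present and is a disagreement kernel, yielding the desired contradiction and completing the proof.
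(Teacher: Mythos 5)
Your proposal is correct and reaches the conclusion by a reorganized version of the paper's argument rather than a genuinely new one. The paper's proof first uses Lemma~\ref{lev3} to strip interior constants from a minimal $l$-path and Corollary~\ref{cev2a} to force the remaining vertices into a monotone run of consecutive generators, then splits into five cases according to the types of the two constant endpoints and in each case exhibits one specific generator $g$ on the forced run with $c_zg\ne c_wg$, read off from Lemma~\ref{lev2a}. Your ``kernel profile'' repackages exactly that information: since $c_zg=c_wg$ iff $(z,w)\in\Ker(g)$ and $\Ker(g)$ depends only on the subscript of $g$, recording each constant's class in each $K_i$ collapses the scattered product computations into one table, and it makes transparent why precisely $\{y_0,v_k\}$ and $\{y_1,s\}$ survive: I checked against Definition~\ref{dco1} that the profiles are $y_j\mapsto(1^j,2^{k-j})$, $v_j\mapsto(2^j,1^{k-j})$, $s\mapsto(1,2^{k-1})$, so these are indeed the only coinciding pairs of distinct constants. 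Your attachment-position argument then correctly forces $q-p\ge 2k-1$ for the two good pairs, giving the length and uniqueness of $\tau_1$ and $\tau_2$. What each approach buys: the paper's is fully explicit but requires trusting five separate computations; yours isolates the single invariant that matters, at the cost of still having to verify, pair by pair, that every admissible run meets the disagreement set.

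That last verification is the one place your write-up is only a sketch (``I would handle this by a finite case check''), and it is not literally finite since the pairs are parameterized by indices up to $k$; but the check is uniform in those indices and does go through: for $(y_i,y_j)$ the run necessarily contains position $i+1$, whose subscript lies in the disagreement set $\{i+1,\dots,j\}$; for $(y_i,v_j)$ it contains $b_1$ (if $i,j\ge1$) or $a_k$ (if $i=0$, $j<k$); for $(v_i,v_j)$ it contains $b_{i+1}$; for $(y_i,s)$ it contains $b_i$ or $a_1$; for $(v_i,s)$ it contains $e_1$ --- exactly the witnesses the paper uses. One side remark in your dichotomy is false as stated: a class of $K_i$ can meet $\ima(g_q)$ in more than one element when $i$ is not the subscript of $g_q$ (for instance all of $\ima(a_2)$ lies in Class-2 of $K_3$); but that alternative branch is never needed, since the ``disagreement kernel appears in the run'' branch already covers every case.
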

\begin{proof}
We have that $\tau_1$ and $\tau_2$ are $l$-paths by Lemmas~\ref{lev2} and~\ref{lev2a}.
Suppose that $\lam=c_z-\cdots-c_w$ is a minimal $l$-path in $\cg(S_0^k)$
with constants $c_z$ and $c_w$ as the endpoints.
Recall that $z,w\in\{y_0,y_1,\ldots,y_k,v_1,\ldots,v_k,s\}$.
We may assume that $z$ is to the left of $w$ in the list $y_0,y_1,\ldots,y_k,v_1,\ldots,v_k,s$.
Since $\lam$ is minimal, Lemma~\ref{lev3} implies that $\lam$ does not contain any constants
except $c_z$ and $c_w$. There are five cases to consider.
\begin{itemize}
  \item [(a)] $\lam=c_{y_i}-\cdots-c_{y_j}$, where $0\leq i<j\leq k$.
  \item [(b)] $\lam=c_{y_i}-\cdots-c_{v_j}$, where $0\leq i\leq k$, $1\leq j\leq k$.
  \item [(c)] $\lam=c_{y_i}-\cdots-c_s$, where $0\leq i\leq k$.
  \item [(d)] $\lam=c_{v_i}-\cdots-c_{v_j}$, where $1\leq i<j\leq k$.
  \item [(e)] $\lam=c_{v_i}-\cdots-c_s$, where $1\leq i\leq k$.
\end{itemize}

Suppose (a) holds, that is, $\lam=c_{y_i}-\cdots-h-c_{y_j}$, $0\leq i<j\leq k$. Since $hc_{y_j}=c_{y_j}h$,
either $h=a_j$ or $h=a_{j+1}$ (where $a_{k+1}=b_1$) (since $a_j$ and $a_{j+1}$ are the only generators
that have $y_j$ in their image). Suppose $h=a_{j+1}$. Then, by Corollary~\ref{cev2a}, either
$\lam=c_{y_i}-\cdots-a_j-a_{j+1}-c_{y_j}$ or $\lam=c_{y_i}-\cdots-a_{j+2}-a_{j+1}-c_{y_j}$
(where $a_{j+2}=b_1$ if $j=k-1$, and $a_{j+2}=b_2$ if $j=k$). In the latter case,
\[
\lam=c_{y_i}-\cdots-a_1-e_1-b_k-\cdots-b_1-a_k-\cdots-a_{j+2}-a_{j+1}-c_{y_j},
\]
which is a contradiction since
$a_1$ and $e_1$ do not commute. Thus either $\lam=c_{y_i}-\cdots-a_j-c_{y_j}$
or $\lam=c_{y_i}-\cdots-a_j-a_{j+1}-c_{y_j}$. In either case, $\lam$ contains $a_j$, and so $c_{y_i}a_j=c_{y_j}a_j$
(since $\lam$ is an $l$-path). But, by Lemma~\ref{lev2a}, $c_{y_i}a_j=c_{y_{j-1}}$ and $c_{y_j}a_j=c_{y_j}$.
Hence $c_{y_{j-1}}=c_{y_j}$, which is a contradiction.

Suppose (b) holds, that is, $\lam=c_{y_i}-g-\cdots-h-c_{v_j}$, $0\leq i\leq k$ and $1\leq j\leq k$. Then $g$ is either $a_i$ or $a_{i+1}$
($g=a_{i+1}$ if $i=0$)
and $h$ is either $b_j$ or $b_{j+1}$ (where $b_{k+1}=e_1$). In any case,
$\lam=c_{y_i}-g-\cdots-a_k-b_1-\cdots-h-c_{v_j}$. Suppose $i\geq1$.
Then, by Lemma~\ref{lev2a} and the fact that
$\lam$ is an $l$-path, $c_{v_0}=c_{y_i}b_1=c_{v_j}b_1=c_{v_1}$, which is a contradiction.
If $i=0$ and $j<k$, then $c_{y_{k-1}}=c_{y_0}a_k=c_{v_j}a_k=c_{y_k}$, which is again a contradiction.
If $i=0$ and $j=k$, then $g=a_1$, and so $\lam=\tau_1$.

Suppose (c) holds, that is, $\lam=c_{y_i}-g-\cdots-a_k-b_1-\cdots-b_k-e_1-c_s$, $0\leq i\leq k$, where
$g$ is either $a_i$ or $a_{i+1}$ ($g=a_{i+1}$ if $i=0$). If $i>1$, then
$c_{v_{i-1}}=c_{y_i}b_i=c_sb_i=c_{v_i}$, which is a contradiction.
If $i=0$, then $c_{v_k}=c_{y_0}e_1=c_se_1=c_s$, which is a contradiction.
If $i=1$ and $g=a_1$, then $\lam$ is not minimal since $c_{y_1}-a_2$,
so $a_1$ can be removed. Finally, if $i=1$ and $g=a_2$, then $\lam=\tau_2$.

Suppose (d) holds, that is, $\lam=c_{v_i}-g-\cdots-h-c_{v_j}$, $1\leq i<j\leq k$, where $g$ is either $b_i$ or $b_{i+1}$
and $h$ is either $b_j$ or $b_{j+1}$ (where $b_{k+1}=e_1$). In any case,
$\lam$ contains $b_j$, and so $c_{v_{j-1}}=c_{v_i}b_j=c_{v_j}b_j=c_{v_j}$, which is a contradiction.

Suppose (e) holds, that is, $\lam=c_{v_i}-\cdots-e_1-c_s$, $1\leq i\leq k$.
Then $c_{v_k}=c_{v_i}e_1=c_se_1=c_s$, which is a contradiction.

We have exhausted all possibilities and obtained that $\lam$ must be equal to $\tau_1$ or $\tau_2$.
The result follows.
\end{proof}

\begin{lemma}\label{lev5}
The path  $\pi=a_1-\cdots-a_k-b_1-\cdots-b_k-e_1$ is a unique minimal $l$-path in $\cg(S_0^k)$
with at least one endpoint that is not a constant.
\end{lemma}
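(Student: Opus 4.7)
The plan is to take a minimal $l$-path $\lam=v_1-\cdots-v_m$ in $\cg(S_0^k)$ with at least one non-constant endpoint and progressively narrow its structure until $\lam=\pi$ is forced. First, I would show both endpoints must be generators (rather than constants). If $v_1=g$ is an idempotent generator and $v_m=c_z$ is a constant, the end condition $v_1v_1=v_mv_1$ gives $g=c_zg=c_{zg}$, contradicting that $g$ is not a constant. So $v_1=g$ and $v_m=h$ are distinct generators, and the end conditions $v_1v_m=v_mv_m$ and $v_1v_1=v_mv_1$ force $gh=h$ and $hg=g$. Every product of generators with different subscripts is a constant by Lemma~\ref{lev2}(2)--(7), and the same-subscript products are listed in Lemma~\ref{lev2}(1); inspecting these, the only distinct generator pairs $\{g,h\}$ satisfying $gh=h$ and $hg=g$ are $\{a_i,b_i\}$ for $1\le i\le k$, $\{a_1,e_1\}$, and $\{b_1,e_1\}$.

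Next, I would apply two structural reductions. First, no internal vertex of $\lam$ can be a constant: if $v_j=c_z$ for some $1<j<m$, then $v_{j-1}$ and $v_{j+1}$ are non-constants (since distinct constants do not commute), and by Lemma~\ref{lev3} they commute, so deleting $c_z$ produces a strictly shorter $l$-path, contradicting minimality. Hence $\lam$ lies entirely in the generator set. Second, by Corollary~\ref{cev2a}, two generators commute iff they are adjacent in the list $L=(a_1,\ldots,a_k,b_1,\ldots,b_k,e_1)$, so the generator-subgraph of $\cg(S_0^k)$ is the path graph on $L$; between any two of its vertices there is a unique simple path.

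Finally, I would test each admissible endpoint pair against its unique candidate path. For $\{a_1,e_1\}$ the candidate is $\pi$ itself, and the identities $a_1v=e_1v$ at each vertex $v$ of $\pi$ follow from Lemma~\ref{lev2}(1) at $v\in\{a_1,b_1,e_1\}$, from parts (2) and (6) at $v=a_j$ for $2\le j\le k$ (both sides equal $c_{y_{j-1}}$), and from parts (3) and (7) at $v=b_j$ for $2\le j\le k$ (both sides equal $c_{v_j}$), confirming that $\pi$ is an $l$-path. For $\{b_1,e_1\}$ the candidate $b_1-b_2-\cdots-b_k-e_1$ fails the $l$-path condition because Lemma~\ref{lev2}(5) and (7) give $b_1b_j=c_{v_{j-1}}\ne c_{v_j}=e_1b_j$ for $2\le j\le k$. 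For $\{a_i,b_i\}$ the candidate $a_i-a_{i+1}-\cdots-a_k-b_1-\cdots-b_i$ fails because Lemma~\ref{lev2}(2) and (4) give $a_ia_j=c_{y_{j-1}}\ne c_{y_j}=b_ia_j$ for $i<j\le k$ (when $i<k$), with the analogous failure on the $b$-segment when $i=k$. Thus only $\{a_1,e_1\}$ admits an $l$-path, namely $\pi$, so $\lam=\pi$. I expect the main obstacle to be the final case analysis, particularly handling the boundary cases $i=1$ and $i=k$ in $\{a_i,b_i\}$; but each subcase reduces to a single application of the product table in Lemma~\ref{lev2}.
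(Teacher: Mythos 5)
Your proof is correct and follows essentially the same strategy as the paper's: reduce to a constant-free path of generators, use the right-zero relations in Lemma~\ref{lev2}(1) to pin the endpoint pair down to $\{a_i,b_i\}$, $\{b_1,e_1\}$, or $\{a_1,e_1\}$, invoke Corollary~\ref{cev2a} to get the unique candidate path, and kill the first two cases with the product table. The only cosmetic differences are that you use both end conditions $gh=h$ and $hg=g$ where the paper uses only $e=fe$, and you verify explicitly that $\pi$ is an $l$-path, which the paper delegates to Lemmas~\ref{lev2} and~\ref{lev2a}.
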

\begin{proof}
We have that $\pi$ is an $l$-path by Lemmas~\ref{lev2} and~\ref{lev2a}.
Suppose that $\lam=e-\cdots-f$ is a minimal $l$-path in $\cg(S_0^k)$
such that $e$ or $f$ is not a constant.

We claim that $\lam$ does not contain any constant $c_z$. By Lemma~\ref{lev3}, there is no
constant $c_z$ such that $\lam=e-\cdots-c_z-\cdots-f$ (since otherwise $\lam$ would not be minimal).
We may assume that $f$
is not a constant. But then $e$ is not a constant either since otherwise we would have that
$ef$ is a constant and $ff=f$ is not a constant. But this is impossible since $\lam$ is an $l$-path,
and so $ef=ff$. The claim has been proved.

Thus all elements in $\lam$ are generators from list (\ref{ecev2}). We may assume that $e$ is to
the left of $f$ (according to the ordering in (\ref{ecev2})). Since $\lam$ is an $l$-path,
$e=ee=fe$. Hence, by Lemma~\ref{lev2}, $e=a_p$ and $f=b_p$ (for some $p\in\{1,\ldots,k\}$)
or $e=b_1$ and $f=e_1$ or $e=a_1$ and $f=e_1$.

Suppose that $e=a_p$ and $f=b_p$ for some $p$. Then, by Corollary~\ref{cev2a},
$\lam=a_p-\cdots-a_k-b_1-\cdots-b_p$. (Note that $\lam=a_p-a_{p-1}-\cdots-a_1-e_1-b_k-\cdots-b_p$ is impossible
since $a_1e_1\ne e_1a_1$.) If $p>1$ then, by Lemma~\ref{lev2},
$c_{v_0}=a_pb_1=b_pb_1=c_{v_1}$, which is a contradiction. If $p=1$, then
$c_{y_{k-1}}=a_1a_k=b_1b_k=c_{y_k}$, which is again a contradiction.

Suppose that $e=b_1$ and $f=e_1$. Then $\lam=b_1-\cdots-b_k-e_1$, and so
$c_{v_{k-1}}=b_1b_k=e_1b_k=c_{v_k}$, which is a contradiction.

Hence we must have $e=a_1$ and $f=e_1$. But then, by Corollary~\ref{cev2a},
$\lam=a_1-\cdots-a_k-b_1-\cdots-b_k-e_1=\pi$. The result follows.
\end{proof}

\begin{theorem}\label{teve}
For every even integer $n\geq2$, there is a band $S$ with knit degree $n$.
\end{theorem}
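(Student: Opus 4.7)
The plan is to set $k = n/2$ and take $S = S_0^k$ for even $n \geq 4$. By Corollary~\ref{cev2}, $S_0^k$ is a band, so the task reduces to showing $\kd(S_0^k) = 2k$.

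For the upper bound, I would exhibit the path $\pi = a_1 - a_2 - \cdots - a_k - b_1 - b_2 - \cdots - b_k - e_1$, which has length $2k$. Corollary~\ref{cev2a} says that consecutive generators in the listed ordering commute, so $\pi$ is indeed a path, and that $\pi$ is an $l$-path (that is, $a_1 v = e_1 v$ for every vertex $v$ of $\pi$) follows by routine checking against the multiplication formulas in Lemmas~\ref{lev2} and~\ref{lev2a}. The matching lower bound is essentially complete already: Lemmas~\ref{lev4} and~\ref{lev5} together enumerate every minimal $l$-path in $\cg(S_0^k)$ as either $\pi$ itself (length $2k$) or one of $\tau_1, \tau_2$ (each of length $2k+1$), so no $l$-path has length less than $2k$. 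The main obstacle of the overall argument is precisely this classification of minimal $l$-paths, but it has already been dispatched by the careful case analysis in those two lemmas.

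The boundary case $n = 2$ is not covered by the construction $S_0^k$, since the kernel-class specifications of Definition~\ref{dco1} fail to partition $X$ when $k = 1$. I would therefore handle it with a small ad hoc band, for instance a three-element band arranged so that its three elements form an $l$-path of length $2$. Note that in any band no $l$-path of length $1$ can exist, since the defining conditions $a_1 a_2 = a_2 a_1$, $a_1^2 = a_2 a_1$, and $a_1 a_2 = a_2^2$ together force $a_1 = a_2$; hence such a three-element band automatically has knit degree exactly $2$, completing the theorem.
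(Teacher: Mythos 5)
For $n=2k\ge 4$ your argument is the paper's argument: take $S=S_0^k$, invoke Corollary~\ref{cev2} for the band property, and use the classification in Lemmas~\ref{lev4} and~\ref{lev5} to conclude that every minimal $l$-path is $\pi$ (length $2k$), $\tau_1$ (length $2k+1$) or $\tau_2$ (length $2k+2$), so $\kd(S_0^k)=2k$. (Minor slip: $\tau_2$ has length $2k+2$, not $2k+1$; this is harmless.)

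The genuine gap is the case $n=2$. A three-element band cannot have knit degree $2$, because no three-element semigroup has a path of length $2$ in its commuting graph at all: if $S=\{a,b,c\}$ and $a-b-c$ were a path in $\cg(S)$, then $b$ would commute with $a$, with $c$, and with itself, hence with every element of $S$, so $b\in Z(S)$ and $b$ is not even a vertex of $\cg(S)$. You therefore need at least four elements; the paper uses the band $S=\{a,b,c,d\}$ in which $a$ and $c$ act as left identities and $b$ and $d$ as left zeros, so that $Z(S)=\emptyset$, $\cg(S)$ is the $4$-cycle $a-b-c-d-a$, and $a-b-c$ is an $l$-path of length $2$. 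Your observation that a band admits no $l$-path of length $1$ (since $a_1a_2=a_2a_1$ together with $a_1=a_1^2=a_2a_1$ and $a_1a_2=a_2^2=a_2$ forces $a_1=a_2$) is correct and supplies the lower bound once a valid example is in hand --- the paper leaves that step to the reader --- but the existence half of your $n=2$ argument fails as stated and must be replaced by a four-element (or larger) example.
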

\begin{proof}
Let $n=2$. Consider the band $S=\{a,b,c,d\}$ defined by the following Cayley table:
\[
\begin{tabular}{c|cccc}
     & $a$ &  $b$  & $c$ & $d$ \\\hline
 $a$ & $a$ &  $b$  & $c$ & $d$ \\
 $b$ & $b$ &  $b$  & $b$ & $b$ \\
 $c$ & $a$ &  $b$  & $c$ & $d$ \\
 $d$ & $d$ &  $d$  & $d$ & $d$ \\
 \end{tabular}
\]
It is easy to see that the center of $S$ is empty and $a-b-c$ is a shortest $l$-path in $\cg(S)$. Thus $\kd(S)=2$.

Let $n=2k$ where $k\geq2$. Consider the semigroup $S_0^k$ defined by (\ref{edco1}). Then, by Corollary~\ref{cev2},
$S_0^k$ is a band. The paths $\tau_1$, $\tau_2$, and $\pi$ from Lemmas~\ref{lev4} and \ref{lev5}
are the only minimal $l$-paths in $\cg(S_0^k)$.
Since $\tau_1$ has length $2k+1=n+1$, $\tau_2$ has length $2k+2=n+2$, and $\pi$ has length $2k=n$, it follows that $\kd(S_0^k)=n$.
\end{proof}

\subsection{The Odd Case}\label{ssodd}

Suppose $n=2k+1\geq5$ is odd. We will obtain a band $S$ of knit degree $n$ by slightly modifying
the construction of the band $S_0^k$ from Definition~\ref{dco1}. Recall that $S_0^k$
has knit degree $2k$ (see the proof of Theorem~\ref{teve}). We will obtain a band of knit degree $n=2k+1$
by simply removing transformations $e_1$ and $c_s$ from $S_0^k$.

\begin{defi}\label{dco2}
{\rm
Let $k\geq2$ be an integer. Consider the following subset of the semigroup $S^0_k$ from Definition~\ref{dco1}:
\begin{equation}\label{edco21}
S^1_k=S^0_k-\{e_1,c_s\}=\{a_1,\ldots,a_k,b_1,\ldots,b_k,c_{y_0},c_{y_1},\ldots,c_{y_k},c_{v_1},\ldots,c_{v_k}\}.
\end{equation}
By Lemmas~\ref{lev2} and \ref{lev2a}, $S^1_k$ is a subsemigroup of $S^0_k$.
}
\end{defi}

\begin{rem}\label{rdco2}
{\rm
Note that $r$ and $s$, which still occur in the domain (but not the image)
of each element of $S_1^k$, are now superfluous. We can remove them from the domain of each element of $S^1_k$
and view $S^1_k$ as a semigroup of transformations on the set
\[
X=\{y_0,y_1,\ldots,y_k=v_0,v_1,\ldots,v_k,x_1,\ldots,x_k,u_1,\ldots,u_k\}.
\]
}
\end{rem}

It is clear from the definition of $S_1^k$ that the multiplication table for $S_1^k$ is the multiplication
table for $S_0^k$ (see Lemmas~\ref{lev2} and \ref{lev2a})
with the rows and columns $e_1$ and $c_s$ removed. This new multiplication table is given by Lemmas~\ref{lev2}
and \ref{lev2a} if we ignore the multiplications involving $e_1$ or $c_s$. Therefore, the following lemma
follows immediately from Corollary~\ref{cev2} and Lemmas~\ref{lev4} and \ref{lev5}.

\begin{lemma}\label{levnew1} Let $S_1^k$ be the semigroups defined by {\rm(\ref{edco21})}. Then
$S_1^k$ is a band and $\tau=c_{y_0}-a_1-\cdots-a_k-b_1-\cdots-b_k-c_{v_k}$
is the only minimal $l$-path in $\cg(S_1^k)$.
\end{lemma}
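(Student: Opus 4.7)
The plan is to derive both assertions as essentially immediate consequences of the analysis of $S_0^k$ already carried out, exploiting the fact that $S_1^k$ is obtained from $S_0^k$ by deleting exactly the two elements $e_1$ and $c_s$, while the multiplication of the remaining elements is unchanged. The paragraph immediately preceding the lemma observes (via Lemmas~\ref{lev2} and~\ref{lev2a}) that $S_1^k$ is closed under the operation of $S_0^k$, so $S_1^k$ is a subsemigroup of the band $S_0^k$ (Corollary~\ref{cev2}) and hence itself a band.

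To see that $\tau$ is an $l$-path in $\cg(S_1^k)$, I would read off the adjacencies of consecutive vertices directly from Lemma~\ref{lev4}, where they were established for $\tau_1$. For the left-path equations $c_{y_0}v = c_{v_k}v$ with $v$ ranging over the vertices of $\tau$, Lemma~\ref{lev2a} gives $c_{y_0}a_j = c_{y_{j-1}} = c_{v_k}a_j$ and $c_{y_0}b_j = c_{v_j} = c_{v_k}b_j$ for $1 \leq j \leq k$; the two remaining cases where $v$ is one of the constant endpoints are trivial, since any element post-multiplied by a constant is absorbed into that constant.

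For uniqueness, the key observation is that every $l$-path in $\cg(S_1^k)$ is also an $l$-path in $\cg(S_0^k)$: the operations agree, the centre $Z(S_0^k)$ is empty (no generator equals $\id_X$ and no single constant lies in the image of every generator), and the left-path equations are identical. Consequently, if any $l$-path exists between a pair $(u,w)$ of vertices of $\cg(S_1^k)$, then a minimal $l$-path from $u$ to $w$ exists in $\cg(S_0^k)$, and by Lemmas~\ref{lev4} and~\ref{lev5} it must be one of $\tau_1, \tau_2, \pi$. Since $\tau_2$ contains $c_s$ and $\pi$ contains $e_1$, only $\tau_1 = \tau$ survives, forcing $(u,w) = (c_{y_0}, c_{v_k})$ and the minimal length between them to be $2k+1$. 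Hence $\tau$ is the unique minimal $l$-path in $\cg(S_1^k)$.

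The only subtle point, and the point I would flag as the main (if minor) obstacle, is the gap between ``arbitrary $l$-path'' and ``minimal $l$-path'' in the statements of Lemmas~\ref{lev4} and~\ref{lev5}; this is bridged by the basic remark that the existence of any $l$-path between a fixed pair of endpoints guarantees the existence of a shortest such, which by those lemmas must coincide with $\tau_1$, $\tau_2$, or $\pi$, and the elimination of the last two is purely a matter of checking which vertices survive the deletion of $e_1$ and $c_s$.
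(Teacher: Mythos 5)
Your proposal is correct and follows essentially the same route as the paper, which gives no separate proof but simply asserts that the lemma "follows immediately" from Corollary~\ref{cev2} and Lemmas~\ref{lev4} and~\ref{lev5} once one notes that the multiplication table of $S_1^k$ is that of $S_0^k$ with the rows and columns of $e_1$ and $c_s$ deleted. Your explicit bridging of the minimal/arbitrary $l$-path issue (any $l$-path in $\cg(S_1^k)$ is one in $\cg(S_0^k)$, so a shortest one between its endpoints must be $\tau_1$, $\tau_2$, or $\pi$, and only $\tau_1$ avoids the deleted elements) is exactly the detail the paper leaves implicit.
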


\begin{theorem}\label{todd}
For every odd integer $n\geq5$, there is a band $S$ of knit degree $n$.
\end{theorem}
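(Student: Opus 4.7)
The plan is to invoke Lemma \ref{levnew1} essentially directly. Given an odd integer $n \geq 5$, I write $n = 2k+1$ with $k \geq 2$, and I propose the band $S := S_1^k$ from Definition \ref{dco2} as the witness. By Lemma \ref{levnew1}, $S_1^k$ is a band, and the only minimal $l$-path in $\cg(S_1^k)$ is
\[
\tau = c_{y_0}-a_1-\cdots-a_k-b_1-\cdots-b_k-c_{v_k}.
\]
Since $\tau$ passes through $2k+2$ pairwise distinct vertices, its length is $2k+1 = n$, so $\kd(S_1^k) = n$.

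The only thing that requires a word of care is verifying the edge count of $\tau$: there are $k$ vertices of type $a_i$, $k$ of type $b_j$, plus the two endpoints $c_{y_0}$ and $c_{v_k}$, for a total of $2k+2$ vertices and hence length $2k+1$. Everything else — that $\tau$ is indeed an $l$-path, that it is minimal, and that no shorter $l$-path exists anywhere in $\cg(S_1^k)$ — is packaged into Lemma \ref{levnew1}, which in turn relied on the multiplication facts from Lemmas \ref{lev2} and \ref{lev2a} together with the fact that removing $e_1$ and $c_s$ from $S_0^k$ leaves those facts intact (cf.\ Remark \ref{rdco2}).

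There is no real obstacle: all the heavy lifting was done in establishing Lemma \ref{levnew1}. The proof should therefore be essentially one line setting $n = 2k+1$ and citing that lemma, and I would present it in exactly that compressed form to parallel the structure of Theorem \ref{teve} in the even case.
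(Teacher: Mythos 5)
Your proposal is correct and matches the paper's own proof essentially verbatim: both set $n=2k+1$ with $k\geq2$, take $S=S_1^k$ from Definition~\ref{dco2}, and cite Lemma~\ref{levnew1} to conclude that the unique minimal $l$-path $\tau$ has length $2k+1=n$, hence $\kd(S_1^k)=n$. Your explicit vertex count confirming that $\tau$ has length $2k+1$ is a harmless extra check.
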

\begin{proof}
Let $n=2k+1$ where $k\geq2$. Consider the semigroup $S_1^k$ defined by (\ref{edco21}). Then, by Lemma~\ref{levnew1},
$S_1^k$ is a band and $\tau=c_{y_0}-a_1-\cdots-a_k-b_1-\cdots-b_k-c_{v_k}$
is the only minimal $l$-path in $\cg(S_1^k)$.
Since $\tau$ has length $2k+1=n$, it follows that $\kd(S_1^k)=n$.
\end{proof}

The case $n=3$ remains unresolved.
\vskip 2mm
\noindent{\bf Open Question.} Is there a semigroup of knit degree $3$?

\section{Commuting Graphs with Arbitrary Diameters}\label{sald}
\setcounter{equation}{0}
In Section~\ref{stx}, we showed that, except for some special cases, the commuting graph of any ideal of
the semigroup $T(X)$ has diameter $5$.
In this section, we use the constructions of Section~\ref{smlp} to show that
there are semigroups whose commuting graphs have any prescribed diameter.
We note that the situation is (might be) quite different in group theory:
it has been conjectured
that there is an upper bound for the diameters of the connected commuting graphs of finite non-abelian
groups \cite[Conjecture~2.2]{IrJa08}.

\begin{theorem}\label{tald}
For every $n\geq2$, there is a semigroup $S$ such that the diameter of $\cg(S)$ is $n$.
\end{theorem}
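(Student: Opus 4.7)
The plan is to give explicit constructions for each $n\ge 2$. For $n=2$ and $n=3$ we use small ad hoc semigroups with empty centre whose commuting graphs are paths of length $2$ and $3$ respectively. For $n=4$ we appeal to Theorem~\ref{tdia3}(2): when $|X|=4$, $\cg(T(X))$ already has diameter $4$. The heart of the argument is the case $n\ge 5$, which is handled by the bands of Section~\ref{smlp}: take $S=S_1^k$ from Definition~\ref{dco2} when $n=2k+1$ is odd, and $S=S_0^k$ from Definition~\ref{dco1} when $n=2k+2\ge 6$ is even.

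The structural observation driving the argument is that $\cg(S_1^k)$ and $\cg(S_0^k)$ are caterpillar-like graphs. By Corollary~\ref{cev2a}, the generators form a single spine: a path of length $2k-1$ in $\cg(S_1^k)$ and a path of length $2k$ in $\cg(S_0^k)$. Every other vertex is a constant $c_z$, and by Lemma~\ref{lcon}(1), $c_z$ is adjacent to an idempotent generator $g$ precisely when $z\in\ima(g)$. Reading off the images given in Definition~\ref{dco1}, the point $y_0$ lies in the image of no generator other than $a_1$; similarly $v_k$ lies only in $\ima(b_k)$ in $S_1^k$, and $s$ lies only in $\ima(e_1)$ in $S_0^k$. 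Thus $c_{y_0}$ is a pendant vertex attached to one end of the spine, while $c_{v_k}$ (respectively $c_s$) is a pendant attached to the other end. Since distinct constants never commute, these pendants have no further neighbours, so $d(c_{y_0},c_{v_k})\ge 1+d(a_1,b_k)+1=2k+1$ in $\cg(S_1^k)$ and $d(c_{y_0},c_s)\ge 2k+2$ in $\cg(S_0^k)$, giving the lower bounds on the diameter.

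For the matching upper bound, I split into three cases according to whether each of $u,v$ is a generator or a constant. Every remaining constant is either a pendant or has exactly two neighbours among the generators, and those two neighbours are consecutive on the spine (this follows because, apart from the hinge $y_k=v_0$ between $a_k$ and $b_1$ and the analogous hinge between $b_k$ and $e_1$, images of non-consecutive generators are pairwise disjoint). Hence no constant can shortcut a non-adjacent pair of spine vertices, and a quick case check shows $d(u,v)$ is at most the spine length plus $2$, namely $2k+1$ in $\cg(S_1^k)$ and $2k+2$ in $\cg(S_0^k)$. Combining the two bounds yields $\diam(\cg(S_1^k))=n$ and $\diam(\cg(S_0^k))=n$ as required.

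The main obstacle I expect is making precise the claim that no constant provides a shortcut; this rests entirely on the disjointness of images of non-consecutive generators built into Definition~\ref{dco1}, and once that is accepted, the upper bound is a short case analysis. A secondary, more pedestrian, obstacle is the handling of the small cases $n=2,3$, which cannot be covered by the band constructions above (these require $k\ge 2$).
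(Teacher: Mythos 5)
Your proposal is correct and follows essentially the same route as the paper: the small cases $n=2,3$ by explicit ad hoc semigroups, $n=4$ by citing an earlier diameter computation, and $n\geq 5$ by showing that in $S_0^k$ (resp.\ $S_1^k$) the constants $c_{y_0}$ and $c_s$ (resp.\ $c_{v_k}$) are pendant vertices at opposite ends of the generator spine, with the upper bound coming from every constant being adjacent to a spine vertex. Your explicit remark that constants cannot shortcut non-consecutive generators is exactly the content of Lemma~\ref{lev3} and is the same disjointness-of-images argument the paper relies on.
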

\begin{proof}
Let $n\in\{2,3,4\}$. The commuting graph of the band $S$ defined by the Cayley table in the proof of Theorem~\ref{teve}
is the cycle $a-b-c-d-a$. Thus the diameter of $\cg(S)$ is $2$.
Consider the semigroup $S$ defined by the following table:
\[
\begin{tabular}{r|rrrr}
 & a & b & c & d\\ \hline
    a & a & a & a & a \\
    b & a & b & c & c \\
    c & c & c & c & c \\
    d & c & d & c & c
\end{tabular} \hspace{.5cm}
\]
Note that $Z(S)=\emptyset$ and $\cg(S)$ is the chain $a-b-c-d$. Thus the diameter of $\cg(S)$
is $3$. The diameter of $\cg(J_4)$ is $4$ (where $J_4$ is an ideal of $T(X)$ with $|X|=5$).

Let $n\geq5$. Suppose $n$ is even.
Then $n=2k+2$ for some $k\geq2$. Consider the band $S_0^k$ from
Definition~\ref{dco1}. Since $c_{y_0}$ and $a_1$ are the only elements of $S_0^k$ whose image
contains $y_0$, they are the only elements of $S_0^k$ commuting with $c_{y_0}$ (see Lemma~\ref{lcon}).
Similarly, $e_1$ and $c_s$ are the only elements commuting with $c_s$. Therefore, it follows from
Corollary~\ref{cev2a} that $c_{y_0}-a_1-\cdots-a_k-b_1-\cdots-b_k-e_1-c_s$ is a shortest path in $\cg(S_0^k)$
from $c_{y_0}$ to $c_s$, that is, the distance between $c_{y_0}$ and $c_s$ is $2k+2=n$.
Since $a_1-\cdots-a_k-b_1-\cdots-b_k-e_1$ is a path in $\cg(S_0^k)$, $c_{y_i}a_i=a_ic_{y_i}$
and $c_{v_i}b_i=b_ic_{v_i}$ ($1\leq i\leq k$), it follows that the distance between any two vertices
of $\cg(S_0^k)$ is at most $2k+2$. Hence the diameter of $\cg(S_0^k)$ is $n$.

Suppose $n$ is odd.
Then $n=2k+1$ for some $k\geq2$. Consider the band $S_1^k$ from
Definition~\ref{dco2}. Then $c_{y_0}-a_1-\cdots-a_k-b_1-\cdots-b_k-c_{v_k}$ is a shortest path in $\cg(S_1^k)$
from $c_{y_0}$ to $c_{v_k}$, that is, the distance between $c_{y_0}$ and $c_{v_k}$ is $2k+1=n$.
As for $S_0^k$, we have $c_{y_i}a_i=a_ic_{y_i}$
and $c_{v_i}b_i=b_ic_{v_i}$ ($1\leq i\leq k$). Thus the distance between any two vertices
of $S_1^k$ is at most $2k+1$, and so the diameter of $\cg(S_1^k)$ is $n$.
\end{proof}

\section{Schein's Conjecture}\label{ssch}
\setcounter{equation}{0}
The results obtained in Section~\ref{smlp} enable us to settle a conjecture formulated by B.M. Schein
in 1978 \cite[p.~12]{Sc78}. Schein stated his conjecture in the context of the attempts to characterize
the $r$-semisimple bands.

A right congruence $\tau$ on a semigroup S is said to be modular if there exists an element $e\in S$ such that
$(ex)\tau x$ for all $x\in S$. The radical $R_r$ on a band $S$ is the intersection of all maximal modular right congruences
on $S$ \cite{Oe66}. A band $S$ is called \emph{$r$-semisimple} if its radical $R_r$ is the identity relation on $S$.

In 1969, B.D. Arendt announced a characterization of $r$-semisimple bands \cite[Theorem~18]{Ar69}.
In 1978, B.M Schein pointed out that Arendt's characterization is incorrect and proved \cite[p.~2]{Sc78}
that a band $S$ is $r$-semisimple if and only if it satisfies infinitely many quasi-identities:
(1) and $(A_n)$ for all integers $n\geq1$, where
\begin{align}
(1)\,\,\,\,\,&zx=zy\imp xy=yx,\notag\\
(A_n)\,\,\,\,\,&x_1x_2=x_2x_1\wedge x_2x_3=x_3x_2\wedge\ldots\wedge x_{n-1}x_n=x_nx_{n-1}\wedge\notag\\
&\wedge x_1x_1=x_nx_1\wedge x_1x_2=x_nx_2\wedge\ldots\wedge x_1x_n=x_nx_n \imp x_1=x_n.\notag
\end{align}
Schein observed that $(A_1)$ and $(A_2)$ are true in every band, that $(A_3)$ easily follows from (1),
and that Arendt's characterization of $r$-semisimple bands is equivalent to (1). He used the last observation
to show that Arendt's characterization is incorrect by providing an example of a band $T$ for which
(1) holds but $(A_4)$ does not. We note that Schein's example is incorrect since the Cayley table
in \cite[p.~10]{Sc78}, which is supposed to define $T$, does not define a semigroup because the
operation is not associative: $(4*1)*1=10\neq8=4*(1*1)$. However, Schein was right that
it is not true that condition (1) implies $(A_n)$ for all $n$. The semigroup $S_0^2$ (see Table~2)
satisfies (1) but it does not satisfy $(A_5)$ since $a_1-a_2-b_1-b_2-e_1$ is an $l$-path
(so the premise of $(A_5)$ holds) but $a_1\ne e_1$.

At the end of the paper, Schein formulates his conjecture \cite[p.~12]{Sc78}:
\vskip 2mm
\noindent{\bf Schein's Conjecture.} For every $n>1$, $(A_n)$ does not imply $(A_{n+1})$.
\vskip 2mm
The reason that Section~\ref{smlp} enables us to settle Schein's conjecture is the following lemma.

\begin{lemma}\label{lscon}
Let $n\geq1$ and let $S$ be a band with no central elements. Then $S$ satisfies $(A_n)$
if and only if $\cg(S)$ has no $l$-path of length $<n$.
\end{lemma}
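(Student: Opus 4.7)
The plan is to prove both directions by contrapositive, translating the quasi-identity $(A_n)$ directly into a statement about paths in $\cg(S)$. Because $S$ has no central elements, every element of $S$ is a vertex of $\cg(S)$, and because $S$ is a band, every premise term $x_ix_i$ collapses to $x_i$. The core observation is that the premise of $(A_n)$ is almost exactly the defining condition of an $l$-path on $x_1,\ldots,x_n$, except that the $x_i$ are not required to be distinct and consecutive ones are not required to differ.

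For the backward direction, I would assume $\cg(S)$ contains an $l$-path $a_1-a_2-\cdots-a_m$ with $m\le n$ and build a witness for $\neg (A_n)$ by padding: set $x_i=a_i$ for $1\le i\le m$ and $x_i=a_m$ for $m<i\le n$. Consecutive commutations hold on the original part by the edges of the path and trivially on the padded tail. The equations $x_1x_i=x_nx_i$ for $i\le m$ are the $l$-path equations themselves; for $i>m$ they reduce, using the band axiom and the $l$-path equation at index $m$, to $a_1a_m=a_m=a_ma_m$. Since $a_1\ne a_m$, the conclusion $x_1=x_n$ fails.

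For the forward direction, I would assume $(A_n)$ fails and extract an $l$-path from the witnessing tuple $x_1,\ldots,x_n$ by a greedy ``last occurrence'' pruning: put $k_0=0$, $y_1=x_1$, and given $y_i=x_{k_{i-1}+1}$ set $k_i=\max\{j:x_j=y_i\}$; stop at step $m$ when $k_m=n$, otherwise continue with $y_{i+1}=x_{k_i+1}$. The $k_i$ are strictly increasing and capped by $n$, giving $m\le n$; the maximality of each $k_i$ forces the $y_i$ to be pairwise distinct (a later repeat would contradict the choice of an earlier $k_j$); consecutive $y_i$ commute via $y_iy_{i+1}=x_{k_i}x_{k_i+1}=x_{k_i+1}x_{k_i}=y_{i+1}y_i$; and since each $y_i$ coincides with some $x_{j_i}$, the hypothesis $x_1x_{j_i}=x_nx_{j_i}$ yields $y_1y_i=y_my_i$. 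As $y_1=x_1\ne x_n=y_m$, this is a genuine $l$-path of length $m-1<n$.

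The only delicate point is this pruning step in the forward direction: naively deleting repeated entries can break the $l$-path equations, so one must choose the surviving indices carefully. Using the \emph{last} occurrence of each value ensures both that the resulting vertices are distinct and that each $y_i$ still equals some $x_{j_i}$, which is exactly what preserves the equations $y_1y_i=y_my_i$. The rest of the argument is routine bookkeeping, using only that $S$ is a band with empty center.
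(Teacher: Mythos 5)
Your proof is correct and follows essentially the same route as the paper's: one direction pads a short $l$-path with copies of its last vertex to contradict $(A_n)$, and the other prunes repeated entries from a witnessing tuple to obtain a genuine $l$-path of length $<n$. The only differences are cosmetic --- the paper splices out repeats ad hoc where you use an explicit last-occurrence greedy selection (which makes the distinctness and endpoint-preservation checks cleaner), and your ``forward/backward'' labels are swapped relative to the statement --- but the underlying argument is identical.
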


\begin{proof}
First note that $(A_n)$ can be expressed as: for all $x_1,\ldots,x_n\in S$,
\begin{equation}\label{elscon}
x_1-\cdots-x_n\mbox{ and }x_1x_i=x_nx_i\mbox{ $(1\leq i\leq n)$}\imp x_1=x_n.
\end{equation}
(Here, we allow $x-x$ and do not require that $x_1,\ldots,x_n$ be distinct.)

Assume $S$ satisfies $(A_n)$. Suppose to the contrary that $\cg(S)$ has an $l$-path
$\lam=x_1-\cdots-x_k$ of length $<n$, that is, $k\leq n$. Then
$x_1-\cdots-x_k-x_{k+1}-\cdots-x_n$, where $x_i=x_k$ for every $i\in\{k+1,\ldots,n\}$,
and so $x_1=x_n=x_k$ by (\ref{elscon}). This is a contradiction since $\lam$ is a path.

Conversely, suppose that $\cg(S)$ has no $l$-path of length $<n$. Let
$x_1-\cdots-x_n$ and $x_1x_i=x_nx_i$ ($1\leq i\leq n$). Suppose to the contrary
that $x_1\ne x_n$.
If there are $i$ and $j$ such that
$1\leq i<j\leq n$ and $x_i=x_j$, we can replace $x_1-\cdots-x_i-\cdots-x_j-\cdots-x_n$
with $x_1-\cdots-x_i-x_{j+1}-\cdots-x_n$. Therefore, we can assume that $x_1,\ldots,x_n$
are pairwise distinct. Recall that
$S$ has no central elements, so all $x_i$ are vertices in $\cg(S)$. Thus
$x_1-\cdots-x_n$ is an $l$-path in $\cg(S)$ of length $n-1$, which is a contradiction.
\end{proof}

First, Schein's conjecture is false for $n=3$.

\begin{prop}\label{pscon1} $(A_3)\imp(A_4)$.
\end{prop}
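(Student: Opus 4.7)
My plan is to derive $x_1 = x_4$ from the hypotheses of $(A_4)$ by two successive applications of $(A_3)$. In a band the hypotheses read $x_1 x_2 = x_2 x_1$, $x_2 x_3 = x_3 x_2$, $x_3 x_4 = x_4 x_3$, $x_4 x_1 = x_1$, $x_4 x_2 = x_1 x_2$, $x_4 x_3 = x_1 x_3$, and $x_1 x_4 = x_4$. The missing ingredient for applying $(A_3)$ directly to the triple $(x_1, x_2, x_4)$ is the commutativity $x_2 x_4 = x_4 x_2$, so my first step is to establish it.

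For this I would invoke $(A_3)$ on the auxiliary triple $y_1 = x_2 x_4$, $y_2 = x_3$, $y_3 = x_4 x_2$. The choice $y_2 = x_3$ is natural because $x_3$ commutes with both $x_2$ and $x_4$ by hypothesis, so the two edge conditions $y_1 y_2 = y_2 y_1$ and $y_2 y_3 = y_3 y_2$ follow by shuffling $x_3$ past its neighbours: the two products in each pair collapse to $x_2 x_3 x_4$ and to $(x_4 x_2) x_3$ respectively. For the ``end'' conditions $y_3 y_1 = y_1$ and $y_1 y_3 = y_3$, idempotency reduces them to $x_4 x_2 x_4 = x_2 x_4$ and $x_2 x_4 x_2 = x_4 x_2$; both follow by substituting $x_4 x_2 = x_1 x_2 = x_2 x_1$ and invoking $x_1 x_4 = x_4$ together with $x_2^2 = x_2$. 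The cross-condition $y_1 y_2 = y_3 y_2$ becomes $x_2 x_3 x_4 = x_1 x_2 x_3$, which follows from the chain $x_1 x_2 x_3 = x_2 x_1 x_3 = x_2 (x_3 x_4) = x_2 x_3 x_4$ via $x_1 x_3 = x_3 x_4$. Applying $(A_3)$ to the triple delivers $y_1 = y_3$, i.e.\ $x_2 x_4 = x_4 x_2$.

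With this commutativity in hand, the second application of $(A_3)$ to $(x_1, x_2, x_4)$ verifies all five premises at once: the edge conditions $x_1 x_2 = x_2 x_1$ and $x_2 x_4 = x_4 x_2$ are a hypothesis and the conclusion of Step~1, while $x_1 = x_4 x_1$, $x_1 x_2 = x_4 x_2$, and $x_1 x_4 = x_4$ are three of the original $(A_4)$ hypotheses (using $x_1^2 = x_1$ and $x_4^2 = x_4$ for the first and third). Hence $(A_3)$ yields $x_1 = x_4$, so $(A_4)$ holds.

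The main obstacle is pinpointing the right auxiliary triple in the first step. Applying $(A_3)$ directly to $(x_1, x_2, x_4)$ or $(x_1, x_3, x_4)$ fails because one of the required edge conditions is precisely the kind of ``long-range'' commutativity that the $(A_4)$ premises decline to supply. The decisive move is to promote $x_3$ to the middle slot of a \emph{different} $(A_3)$-triple and let the two orderings $x_2 x_4$ and $x_4 x_2$ of one product play $y_1$ and $y_3$; the band identities together with $x_4 x_2 = x_1 x_2$ then force $(y_1, y_3)$ into a right-zero configuration and validate the remaining clauses of the $(A_3)$-premise simultaneously.
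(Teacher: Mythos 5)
Your proof is correct, and it takes a genuinely different route from the paper's. The paper settles $(A_4)$ with a \emph{single} application of $(A_3)$, to the triple $(x_1,\,x_2x_3,\,x_4)$: the clever choice of middle element $x_2x_3$ makes all five premises of $(A_3)$ verifiable directly from the $(A_4)$ hypotheses by shuffling factors, and the conclusion $x_1=x_4$ drops out at once. You instead use \emph{two} applications: first to the auxiliary triple $(x_2x_4,\,x_3,\,x_4x_2)$ to manufacture the missing long-range commutativity $x_2x_4=x_4x_2$ (I checked each of the five premises there --- e.g.\ $x_4x_2x_4=x_1x_2x_4=x_2x_1x_4=x_2x_4$ and $x_2x_4x_2=x_2(x_2x_1)=x_2x_1=x_4x_2$ --- and they all hold), and then to $(x_1,x_2,x_4)$, where every premise is now either an $(A_4)$ hypothesis or your Step~1 conclusion. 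The paper's argument is shorter; yours is somewhat more transparent about \emph{why} the implication holds, since it isolates the one obstruction to applying $(A_3)$ naively (the absent edge $x_2-x_4$) and shows the remaining hypotheses are strong enough to supply it. Both are elementary equational computations in a band, so neither buys extra generality over the other.
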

\begin{proof}
Suppose a band $S$ satisfies $(A_3)$, that is,
\begin{equation}\label{essch1}
x_1x_2=x_2x_1 \wedge x_2x_3=x_3x_2 \wedge x_1x_1=x_3x_1 \wedge x_1x_2=x_3x_2 \wedge x_1x_3=x_3x_3 \imp x_1=x_3.
\end{equation}
To prove that $S$ satisfies $(A_4)$, suppose that
\[
y_1y_2=y_2y_1 \wedge y_2y_3=y_3y_2 \wedge y_3y_4=y_4y_3 \wedge y_1y_1=y_4y_1 \wedge y_1y_2=y_4y_2 \wedge y_1y_3=y_4y_3 \wedge y_1y_4=y_4y_4.
\]
Take $x_1=y_1$, $x_2=y_2y_3$, and $x_3=y_4$. Then $x_1,x_2,x_3$ satisfy the premise of (\ref{essch1}):
\begin{align}
x_1x_2&=y_1y_2y_3=y_1y_3y_2=y_4y_3y_2=y_3y_4y_2=y_3y_1y_2=y_3y_2y_1=y_2y_3y_1=x_2x_1,\notag\\
x_2x_3&=y_2y_3y_4=y_2y_4y_3=y_2y_1y_3=y_1y_2y_3=y_4y_2y_3=x_3x_2,\notag\\
x_1x_1&=y_1y_1=y_4y_1=x_3x_1,\,x_1x_2=y_1y_2y_3=y_4y_2y_3=x_3x_2,\,x_1x_3=y_1y_4=y_4y_4=x_3x_3.\notag
\end{align}
Thus, by (\ref{essch1}), $y_1=x_1=x_3=y_4$, and so $(A_4)$ holds.
\end{proof}

Second, Schein's conjecture is true for $n\ne3$.

\begin{prop}\label{pscon2}
If $n>1$ and $n\ne3$, then $(A_n)$ does not imply $(A_{n+1})$.
\end{prop}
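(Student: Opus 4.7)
The plan is to translate both $(A_n)$ and $(A_{n+1})$ into the combinatorial language of $l$-paths via Lemma~\ref{lscon}, and then invoke the bands of prescribed knit degree that were constructed in Section~\ref{smlp} as concrete counterexamples.

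Concretely, let $n \geq 2$ with $n \neq 3$ be given, and I need a band $S$ with empty center which satisfies $(A_n)$ but not $(A_{n+1})$. By Lemma~\ref{lscon}, on bands with no central elements the statement ``$S$ satisfies $(A_m)$'' is equivalent to ``$\cg(S)$ has no $l$-path of length $<m$.'' So satisfying $(A_n)$ and failing $(A_{n+1})$ is the same as: no $l$-path of length $<n$, but some $l$-path of length $\leq n$. Together these conditions say exactly that the shortest $l$-path of $\cg(S)$ has length $n$, i.e.\ $\kd(S) = n$. Hence it suffices to exhibit, for each such $n$, a band of knit degree $n$ with $Z(S) = \emptyset$.

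For $n = 2$ I will take the four-element band $\{a,b,c,d\}$ displayed in the proof of Theorem~\ref{teve}; a direct inspection of its Cayley table shows $Z(S) = \emptyset$ and that $a-b-c$ is an $l$-path, so $\kd(S) = 2$. For even $n = 2k \geq 4$ I will take $S = S_0^k$ from Definition~\ref{dco1}: by Corollary~\ref{cev2} it is a band, and by Lemmas~\ref{lev4} and~\ref{lev5} the only minimal $l$-paths have lengths $2k$, $2k+1$, and $2k+2$, so $\kd(S_0^k) = n$, as already observed in the proof of Theorem~\ref{teve}. For odd $n = 2k+1 \geq 5$ I will take $S = S_1^k$ from Definition~\ref{dco2}; by Lemma~\ref{levnew1} this is a band whose unique minimal $l$-path has length $2k+1$, so $\kd(S_1^k) = n$.

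The only gap between the already-established theorems and what I need is to verify that $Z(S_0^k) = Z(S_1^k) = \emptyset$, since Lemma~\ref{lscon} requires no central elements. No constant $c_z$ is central because distinct constants do not commute. No idempotent generator $g$ in the list~\eqref{ecev2} is central either: by Corollary~\ref{cev2a}, $g$ commutes with another generator only when they are consecutive, so for instance $a_1$ fails to commute with $a_3$ when $k \geq 3$, and the small cases are handled directly from the multiplication table (for $S_0^2$ see Table~2). This centerless check is really the only potentially fiddly point; it is routine but must be done case-by-case for the generators at the two ends of the list. Once it is in place, applying Lemma~\ref{lscon} twice (once with $m = n$, once with $m = n+1$) yields that $S$ satisfies $(A_n)$ and fails $(A_{n+1})$, completing the proof.
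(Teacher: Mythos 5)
Your proposal is correct and follows essentially the same route as the paper: reduce $(A_n)\not\Rightarrow(A_{n+1})$ to the existence of a centerless band of knit degree $n$ via Lemma~\ref{lscon}, then invoke $S_0^k$ and $S_1^k$ (checking $Z=\emptyset$ from Lemmas~\ref{lev2}--\ref{lev2a}/Corollary~\ref{cev2a}). The only divergence is at $n=2$, where the paper instead uses the three-element band $\{e,f,0\}$ and verifies the failure of $(A_3)$ directly (Lemma~\ref{lscon} is unavailable there since the zero is central), whereas your choice of the centerless four-element band of knit degree $2$ makes the argument uniform and is equally valid.
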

\begin{proof}
Consider the band $S=\{e,f,0\}$, where $0$ is the zero, $ef=f$, and $fe=e$. Then $e-0-f$,
$ee=fe$, $e0=f0$, $ef=ff$, and $e\ne f$. Thus $S$ does not satisfy $(A_3)$. But $S$ satisfies $(A_2)$
since $(A_2)$ is true in every band. Hence $(A_2)$ does not imply $(A_3)$.

Let $n\geq4$. Then, by Theorems~\ref{teve} and \ref{todd} and their proofs, the band $S$
constructed in Definition~\ref{dco1} (if $n$ is even) or Definition~\ref{dco2} (if $n$ is odd)
has knit degree $n$. By Lemmas~\ref{lev2} and \ref{lev2a}, $S$ has no central elements.
Since $\kd(S)=n$, there is an $l$-path in $\cg(S)$
of length $n$ and there is no $l$-path in $\cg(S)$ of length $<n$. Hence, by Lemma~\ref{lscon},
$S$ satisfies $(A_n)$ and $S$ does not satisfy $(A_{n+1})$. Thus $(A_n)$
does not imply $(A_{n+1})$.
\end{proof}

\section{Problems}\label{spro}
We finish this paper with a list of some problems concerning commuting graphs of semigroups.
\begin{itemize}
  \item[(1)] Is there a semigroup with knit degree 3? Our guess is that such a semigroup does not exist.
  \item[(2)] Classify the semigroups whose commuting graph is eulerian (proposed by M. Volkov). The same problem for hamiltonian and  planar graphs.
  \item[(3)] Classify the commuting graphs of semigroups.
  \item[(4)] Is it true that for all natural numbers $n\geq 3$, there is a
semigroup $S$ such that the clique number (girth, chromatic number) of $\cg(S)$ is $n$?
  \item[(5)] Classify the semigroups $S$ such that the clique and chromatic numbers of $\cg(S)$ coincide.
  \item[(6)] Calculate the clique and chromatic numbers of the commuting graphs of $T(X)$ and $\fend(V)$, where
$X$ is a finite set and $V$ is a finite-dimensional vector space over a finite field.
\item[(7)] Let $\cg(S)$ be the commuting graph of a finite non-commutative semigroup $S$.
An \emph{$\mrl$-path} is a path $a_1-\cdots-a_m$ in $\cg(S)$ such that $a_1\ne a_m$ and
$a_1a_ia_1=a_ma_ia_m$ for all $i=1,\ldots,m$. For $\mrl$-paths, prove the results analogous to the results
for $l$-paths contained in this paper.
\item[(8)]
Find classes of finite non-commutative semigroups
such that if $S$ and $T$ are two semigroups in that class and $\cg(S)\cong \cg(T)$, then $S\cong T$.
\end{itemize}

\section{Acknowledgments}
We are pleased to acknowledge the assistance of the automated deduction tool
\textsc{Prover9} and the finite model builder \textsc{Mace4}, both developed by
W. McCune \cite{McCune}. We also thank the developers of GAP \cite{Scel92}, L.H. Soicher for GRAPE \cite{So06},
and Aedan Pope and Kyle Pula for their suggestions after carefully reading the manuscript.

The first author was partially supported by FCT and FEDER, 
Project POCTI-ISFL-1-143 of Centro de Algebra da Universidade de Lisboa, by FCT and PIDDAC through the project PTDC/MAT/69514/2006, by 
PTDC/MAT/69514/2006 Semigroups and Languages, and by\newline PTDC/MAT/101993/2008 Computations in groups and semigroups.

\end{document}